\documentclass[10pt,oneside,reqno]{amsart}

\usepackage[utf8]{inputenc}
\usepackage[english]{babel}

\usepackage{color}
\usepackage{enumerate}
\usepackage{enumitem}
\usepackage{fancyhdr}
\usepackage{upgreek}
\usepackage[T1]{fontenc}

\usepackage{accents}
\usepackage{centernot}
\usepackage{stmaryrd}
\usepackage[super]{nth}

\usepackage{amsmath}
\usepackage{amscd}
\usepackage{amsfonts}
\usepackage{amsthm}
\usepackage{amstext}
\usepackage{amssymb}
\usepackage{amsbsy}
\usepackage{mathrsfs}
\usepackage{mathtools}
\usepackage{array}
\usepackage{tikz-cd}
\usepackage{bm}

\usepackage{float}
\usepackage{caption}
\usepackage{subcaption}
\usepackage{graphicx}

\usepackage{tikz}
\usetikzlibrary{decorations.markings}
\usetikzlibrary{decorations.pathreplacing}
\usetikzlibrary{calligraphy}
\usetikzlibrary{positioning}
\usetikzlibrary{arrows}
\usetikzlibrary{fadings}
\usetikzlibrary{math}

\usepackage[nobysame]{amsrefs}

\usepackage{hyperref}
\hypersetup{
	colorlinks=true,
	linkcolor=blue,
	citecolor=red,
	filecolor=cyan,    
	urlcolor=magenta,
}

\newtheorem{theorem}[subsection]{Theorem}

\newtheorem{proposition}[subsection]{Proposition}

\theoremstyle{definition}
\newtheorem{definition}[subsection]{Definition}

\theoremstyle{remark}
\newtheorem{remark}[subsection]{Remark}


\title[Non-uniqueness \& inadmissibility of the vanishing viscosity limit]{Non-uniqueness \& inadmissibility of the vanishing viscosity limit of the passive scalar transport equation}
\author[1]{Lucas Huysmans}
\author[2]{Edriss S. Titi}
\address[1]{Lucas Huysmans \hfill\break Department of Pure Mathematics \& Mathematical Statistics, University of Cambridge, Cambridge CB3 0WB. Current address: Max Planck Institute for Mathematics in the Sciences, Leipzig 04103, Germany.}
\email{lucas.huysmans@mis.mpg.de}
\address[2]{Edriss S. Titi \hfill\break Department of Mathematics, Texas A\&M University, College Station, TX 77840, USA. Department of Applied Mathematics and Theoretical Physics, University of Cambridge, Cambridge CB3 0WA, UK. Department of Computer Science and Applied Mathematics, Weizmann Institute of Science, Rehovot 76100, Israel.}
\email{titi@math.tamu.edu and Edriss.Titi@maths.cam.ac.uk}
\date{\today}
\thanks{{\it Keywords}: Passive scalar transport equation; non-uniqueness of vanishing viscosity 
	limit; nonphysical and entropy inadmissibility of vanishing viscosity 
	limit; vanishing viscosity limit is not a selection principle for weak 
	solutions of passive scalar transport equation; peculiar energy cascade 
	scenario of turbulent transport; inviscid non-uniqueness; inviscid mixing and unmixing}
\thanks{{\it AMS Subject Classification}: 35A02, 35D30, 35Q35, 60J60, 76F25, 76R10, 76R50}

\begin{document}
	
	\begin{abstract}
		We study selection by vanishing viscosity for the transport of a passive scalar $f(x,t)\in\mathbb{R}$ advected by a bounded, divergence-free vector field $u(x,t)\in\mathbb{R}^2$. This is described by the initial value problem to the PDE $\frac{\partial f}{\partial t} + \nabla\cdot (u f) = 0$, or with positive viscosity/diffusivity $\nu>0$, to the PDE $\frac{\partial f}{\partial t} + \nabla\cdot (u f) -\nu\Delta f = 0$. We demonstrate the failure of the vanishing viscosity limit to select (a) unique solutions or (b) physically admissible solutions in the sense of non-increasing energy/entropy.
	\end{abstract}
	\maketitle
	\setcounter{tocdepth}{1}
	\tableofcontents
	\newpage
	\section{Introduction}\label{Introduction}
	
	\subsection{Background}
	We consider the selection problem among non-unique weak solutions to the passive scalar transport equation. Consider a divergence-free vector field $u:\mathbb{T}^d\times[0,T]\to\mathbb{R}^d$ on a $d$-dimensional spatial torus $\mathbb{T}^d=\mathbb{R}^d/\mathbb{Z}^d$ and a time interval $[0,T]$. For a passive scalar $f:\mathbb{T}^d\times[0,T]\to\mathbb{R}$, the passive transport of $f$ by $u$ is given in conservation-form by the following transport equation:
	\begin{equation*}
		\frac{\partial f}{\partial t} + \nabla\cdot (u f) = 0. \tag{TE}
	\end{equation*}
	This partial differential equation is understood in the distributional sense and admits non-unique weak solutions to the initial value problem for specific non-smooth vector fields outside the Sobolev class $L_t^1W_x^{1,1}$, see \cites{aizenman1978vector,diperna1989ordinary,depauw2003non}, and also recent constructions with sharper regularity \cites{modena2018non,cheskidov2021nonuniqueness}.
	
	When $f$ represents a physical scalar such as the advection of heat, or the concentration of a tracer transported by an incompressible fluid $u$, many of these non-unique weak solutions exhibit thermodynamically inadmissible behaviour. For example, in the construction of \cites{depauw2003non}, one finds that the passive scalar `perfectly unmixes' from a constant initial datum to an inhomogeneous final state, exhibiting some kind of separation of particles akin to a decrease in the thermodynamic entropy of the system.
	
	When studied at the level of particle trajectories, one has, by the superposition principle in \cite{Ambrosio2014}, that such a perfectly unmixing weak solution as in \cite{depauw2003non} corresponds to particle paths which fail the Markov property. On the other hand, in physical systems, molecular chaos at the kinetic level should enforce such Markovianality. Hence, one may view the `perfect unmixing' phenomenon as a physically inadmissible failure of the second law of thermodynamics. The incorporation of such thermodynamic restrictions on weak solutions is well studied in the form of entropy inequalities for hyperbolic conservation laws, see for instance \cite{dafermos2005hyperbolic}.
	
	One approach to recover physically admissible behaviour is introducing diffusivity into the model. This advection-diffusion of $f$ by $u$ with viscosity/diffusivity $\nu>0$ is given by the initial value problem to the following advection-diffusion equation:
	\begin{equation*}
		\frac{\partial f}{\partial t} + \nabla \cdot (uf) - \nu\Delta f = 0. \tag{$\nu$-ADE}
	\end{equation*}
	In contrast to the inviscid case $\nu=0$, such parabolic equations exhibit additional smoothness and remain well-posed under lower regularity of the coefficients, see for example \cites{evans2010partial,flandoli2010well}. Indeed, as shown in \cite{bonicatto2021advection}, the inclusion of time-irreversible viscosity/diffusion removes the previous zoo of non-physical solutions, in line with the philosophy that diffusion enforces molecular chaos at the level of particle trajectories. Viewing \eqref{eqADE} as the Fokker-Planck equation for the SDE on particle paths $dX_t = u(X_t,t) dt + \sqrt{2\nu} dW_t$, see for instance \cite{figalli2008existence}, the unique path measure for $X_t$ now satisfies the strong Markov condition under suitable integrability assumptions on $u$, \cite{krylov2005strong}.
	
	Consequently, it is relevant to study if the vanishing viscosity limit also recovers such physically admissible behaviour. Indeed, one observes that for constant initial data, the vanishing viscosity limit must uniquely select the corresponding constant solution, avoiding, in particular, the solutions as mentioned earlier which perfectly unmix, as in \cite{depauw2003non}.
	
	Also, more generally, selection by vanishing viscosity is well studied for other fluid models, \cite{dafermos2005hyperbolic,Bianchini2005,bardos2012vanishing,bardos2013stability,Nussenzveig_Lopes_2021}. In these examples the vanishing viscosity limit does select a unique solution to the inviscid model, therefore referred to as the physical solution. This literature indicates the importance of vanishing viscosity for the physical admissibility of weak solutions, more generally in fluid mechanics. In contrast to this status quo, we give examples wherein the vanishing viscosity limit fails to restore the uniqueness or physical admissibility of the inviscid system. The precise statement of these results is stated in Theorem \ref{vvtheorem} and \ref{perfmixvv} in the following section.
	
	
	Theorem \ref{perfmixvv}, which is the main contribution of this work, is the statement that there exists a bounded, divergence-free vector field $u$, such that the vanishing viscosity limit of the advection-diffusion equation converges \textit{uniquely} to the inadmissible `mixing-unmixing' solution of the inviscid problem, for all initial data. This solution perfectly mixes to its spatial average, remains this constant average for a fixed time, and later perfectly unmixes back to its initial state. In particular, any arrow of time (or Markovianality) enforced by diffusion is lost in its vanishing limit.
	
	Meanwhile, our first result Theorem \ref{vvtheorem} gives the existence of a bounded, divergence-free vector field $u$, and two different vanishing viscosity subsequences, along each of which the viscous solution converges strongly to different inviscid solutions for all initial data. These inviscid solutions are, moreover, renormalised solutions, see Definition \ref{renormalised} or \cite{diperna1989ordinary}, and entropy preserving, see Section \ref{sec:introentropy}. From the viewpoint of Lagrangian trajectories, this failure of the renormalised condition to select unique solutions can be seen as a consequence of the non-uniqueness of a corresponding Lagrangian flow map, see Definition \ref{lagrangian}, which is injective for all except a null set of times.
	
	\subsection{Main results}
	We introduce in Section \ref{Notation} standard notation of function spaces, and rigorous definitions of weak solutions of the transport equation (Definition \ref{TE}), weak renormalised solutions of the transport equation (Definition \ref{renormalised}), and weak solutions of the advection-diffusion equation with viscosity $\nu>0$ (Definition \ref{ADE}). We also give our own proof of the well-posedness of the advection-diffusion equation in Section \ref{Background}, Theorem \ref{ADEwellposed}.
	{\renewcommand{\thesubsection}{\ref{vvtheorem}}
		\begin{theorem}[Non-unique renormalised vanishing viscosity limit]
			There exists a divergence-free vector field $u \in L^\infty([0,1];L^\infty(\mathbb{T}^2;\mathbb{R}^2))$, and a sequence $\{\nu_n\}_{n\in\mathbb{N}}$ with $\nu_n > 0$ and $\nu_n\xrightarrow{n\to\infty}0$, such that for any initial data $f_0\in  L^\infty(\mathbb{T}^2)$, and for $f^\nu$ the unique solution to \eqref{eqADE} along $u$ with initial data $f_0$, one has
			\begin{gather*}
				f^{\nu_{2n}}\xrightarrow{n\to\infty}f^\mathrm{even}, \\
				f^{\nu_{2n+1}}\xrightarrow{n\to\infty}f^\mathrm{odd},
			\end{gather*}
			with the above convergence in weak-$*$ $L^\infty([0,1];L^\infty(\mathbb{T}^2))$, and strong in $L^p([0,1];L^p(\mathbb{T}^2))$ for all $p \in [1,\infty)$. Furthermore, the limits $f^\mathrm{even}, f^\mathrm{odd}$ are renormalised weak solutions to \eqref{eqTE} along $u$ with initial data $f_0$.
			
			If $f_0$ is not constant, then $f^\mathrm{even}\ne f^\mathrm{odd}$, and moreover the set of weak-$*$ limit points of $f^\nu\in L^\infty([0,1];L^\infty(\mathbb{T}^2))$ as $\nu\to0$ is uncountable.
	\end{theorem}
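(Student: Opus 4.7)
The plan is to construct $u$ as a self-similar concatenation of rescaled ``partial mixing'' building blocks at geometrically shrinking space-time scales, and to exploit the fact that viscosity truncates the cascade at a scale set by $\nu$, so that the parity of the number of stages resolved by the viscous evolution determines the limit.

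First, I would fix a divergence-free building block $u_0\in L^\infty([0,1];L^\infty(\mathbb{T}^2;\mathbb{R}^2))$ whose time-$1$ flow realizes an explicit non-trivial volume-preserving map $\Phi$ on $\mathbb{T}^2$ (for example, a composition of two orthogonal shears that exchange halves of a checkerboard), chosen so that the $\mathbb{Z}$-action $k\mapsto \Phi^k$ on $L^\infty(\mathbb{T}^2)$ is genuinely non-trivial on non-constant data and distinguishes even and odd iterates. Then, on the dyadic time windows $I_k=[1-2^{-k},1-2^{-k-1}]$ for $k\ge 0$, I would insert a spatially $2^{-k}$-rescaled, temporally $|I_k|$-rescaled, and amplitude-$1$-scaled copy of $u_0$, periodically tiled over $\mathbb{T}^2$. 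Because length scale and time scale contract together, the resulting $u$ is divergence-free and uniformly bounded on $[0,1]\times\mathbb{T}^2$. The inviscid flow along $u$ on $[0,1-2^{-N}]$ is then exactly the composition of the $N$ rescaled copies of $\Phi$.

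Second, for the viscous equation at viscosity $\nu$, I would identify the ``critical stage'' $k^*(\nu)$ as the largest $k$ for which the diffusive length scale $\sqrt{\nu |I_k|}$ is smaller than the cellular scale $2^{-k}$, i.e.\ roughly $k^*(\nu)\sim \tfrac13\log_2(1/\nu)$. For $k\le k^*(\nu)$, a commutator/energy estimate against the rescaled block (using well-posedness from Theorem~\ref{ADEwellposed}) shows that $f^\nu$ tracks the advective image $\Phi^k f_0$ up to an error $o(1)$ in $L^2$. For $k>k^*(\nu)$, a straightforward parabolic smoothing estimate applied at the rescaled scale shows that $f^\nu$ relaxes to its local cellular average, hence to its spatial mean, exponentially fast in the rescaled time. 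Consequently, up to an $L^2$-small error, $f^\nu(\cdot,1)$ depends only on the parity of $k^*(\nu)$: choosing $\nu_n\to 0$ with $k^*(\nu_n)=n$, this yields $f^{\nu_{2n}}\to f^{\mathrm{even}}$ and $f^{\nu_{2n+1}}\to f^{\mathrm{odd}}$. The same argument applied on the restricted time intervals $[0,t]$ for $t<1$ (where only finitely many stages have yet acted) gives convergence on all of $[0,1]$. Uniform $L^\infty$ bounds give weak-$*$ convergence; strong $L^p$ convergence then follows because $f^\nu$ takes values, up to small error, in a compact set of rearrangements of $f_0$.

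Third, to show the limits are renormalised solutions of \eqref{eqTE}, I would apply $\beta\in C^1_b$ to the viscous equation to get $\partial_t\beta(f^\nu)+\nabla\cdot(u\beta(f^\nu))-\nu\Delta\beta(f^\nu)=-\nu\beta''(f^\nu)|\nabla f^\nu|^2$; the standard $L^2_tH^1_x$ estimate controls $\nu|\nabla f^\nu|^2$ in $L^1$, so the dissipative right-hand side is bounded and the viscous term vanishes in the limit, giving the renormalisation identity in the distributional sense. Non-equality of $f^{\mathrm{even}}$ and $f^{\mathrm{odd}}$ on non-constant data is immediate from the choice of $u_0$ so that $\Phi^{2n}f_0$ and $\Phi^{2n+1}f_0$ have distinct weak-$*$ limits after cell-averaging.

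Finally, for the uncountable set of weak-$*$ limit points, I would consider viscosities $\nu$ for which $k^*(\nu)$ is an integer but the ``transition'' from stage-resolved to stage-diffused happens at an intermediate point within $I_{k^*(\nu)+1}$, parametrised by a real number $\lambda\in[0,1]$ measuring how much of the $(k^*+1)$-th stage has been completed before diffusion dominates. Varying $\lambda$ continuously produces a continuum of limit profiles interpolating between $\Phi^{k^*}f_0$ and $\Phi^{k^*+1}f_0$ (after homogenisation); the map $\lambda\mapsto \mathrm{limit}$ is injective for non-constant $f_0$, yielding uncountably many distinct weak-$*$ limit points. The main obstacle is the delicate multi-scale matching between the advective and diffusive regimes at scale $k^*(\nu)$: I expect this commutator/smoothing estimate, uniformly across the infinite cascade and uniformly in $f_0\in L^\infty$, to be the technical heart of the argument.
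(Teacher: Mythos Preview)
Your proposal has a genuine conceptual gap: the mechanism you describe for distinguishing viscosity subsequences---parabolic relaxation to cell averages at unresolved scales---is incompatible with the conclusion that the limits are \emph{renormalised} solutions. If for $k>k^*(\nu)$ the viscous solution ``relaxes to its local cellular average, hence to its spatial mean'', then energy is being dissipated anomalously, and the limit cannot preserve all $L^p$ norms of $f_0$. Concretely, your renormalisation argument passes to the limit in $\partial_t\beta(f^\nu)+\nabla\cdot(u\beta(f^\nu))=\nu\Delta\beta(f^\nu)-\nu\beta''(f^\nu)|\nabla f^\nu|^2$, but the energy identity only gives $\nu\|\nabla f^\nu\|_{L^2_{t,x}}^2\le\|f_0\|_{L^2}^2$, i.e.\ boundedness of the defect measure, not its vanishing. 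Your own mechanism forces the defect to be nontrivial, so at best you would obtain an entropy \emph{inequality}, not the renormalisation identity. Relatedly, it is unclear why ``$f^\nu(\cdot,1)$ depends only on the parity of $k^*(\nu)$'': if the unresolved stages homogenise to the mean, the resolved composition $\Phi^{k^*}$ is then averaged away and the parity information is lost.

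The paper's construction avoids this by using \emph{cancellation rather than mixing}. The building blocks are alternating shear flows satisfying a commutator identity (Proposition~\ref{shearcanc}) which forces the composition of four consecutive shears to equal the identity. The shears are arranged in a dyadic-in-time pattern so that, when truncated at level $K$, the full flow map on $[0,1]$ is explicitly either $\mathrm{Id}$ or a single macroscopic shear $y^{(i_1;L_1)}_{2\tau_1}$, depending on the parity of $K$ (Proposition~\ref{shearsolconv}). There is no averaging anywhere; every approximate solution is Lagrangian, hence automatically renormalised. The link to viscosity is then made by a soft, non-quantitative argument (Theorem~\ref{vcontrol}): the $(K{+}1)$-th shear has frequency $L_{K+1}$ chosen so large that the corresponding vector field is weak-$*$ close to zero, and Proposition~\ref{ADEtoADE} (stability of \eqref{eqADE} under weak-$*$ perturbations of $u$ at fixed $\nu$) lets one choose $\nu_K$ so that the viscous solution along the full $u$ is close to inviscid transport along the level-$K$ truncation. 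No explicit scale-matching estimate of the type $k^*(\nu)\sim\tfrac13\log_2(1/\nu)$ is needed or used. Finally, uncountability is obtained not by parametrising intermediate transitions, but by a one-line topological observation: $\nu\mapsto f^\nu$ is continuous into weak-$*$ $L^\infty L^\infty$, so the set of limit points is a connected bounded subset of a metric space, hence either a singleton or uncountable.
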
}
	
	{\renewcommand{\thesubsection}{\ref{perfmixvv}}
		\begin{theorem}[Inadmissible vanishing viscosity limit]
			There exists a divergence-free vector field $u \in L^\infty([0,100];L^\infty(\mathbb{T}^2;\mathbb{R}^2))$, such that for any initial data $f_0\in L^\infty(\mathbb{T}^2)$, and for $f^\nu$ the unique solution to \eqref{eqADE} along $u$ with initial data $f_0$, one has
			\begin{equation*}
				f^\nu\xrightarrow{\nu\to0}f,
			\end{equation*}
			with the above convergence in weak-$*$ $L^\infty([0,100];L^\infty(\mathbb{T}^2))$, strong in $L^p([0,42];L^p(\mathbb{T}^2))$, $C^0([0,42-\epsilon];L^p(\mathbb{T}^2))$, $L^p([58,100];L^p(\mathbb{T}^2))$, and $C^0([58+\epsilon,100];L^p(\mathbb{T}^2))$ for all $p \in [1,\infty)$, and all $\epsilon >0$. The limit function $f \in C_{\mathrm{weak-}*}^0([0,100];L^\infty(\mathbb{T}^2))$ is a weak solution to \eqref{eqTE} along $u$ with initial data $f_0$.
			
			Moreover, for all $t \in [42,58]$
			\begin{equation*}
				f(\cdot, t) \equiv \int_{\mathbb{T}^2}f_0(y)\;dy,
			\end{equation*}
			is perfectly mixed to its spatial average.
			
			Furthermore, for all $t \in [0,100]$, $f(\cdot, t)=f(\cdot,100-t)$ and in particular,
			\begin{equation*}
				f(\cdot, 100) = f_0,
			\end{equation*}
			is perfectly unmixed. In particular, if $f_0$ is not constant, any $L^p(\mathbb{T}^2)$ norms of $f(\cdot, t)$ (for $p\in(1,\infty]$) increase after $t=58$, contrary to the entropy-admissibility criterion in \cite{dafermos2005hyperbolic}.
	\end{theorem}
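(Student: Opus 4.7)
The plan is to construct $u$ explicitly in three phases: a forward mixing cascade on $[0,42]$, an auxiliary phase on $[42,58]$, and a backward unmixing phase on $[58,100]$ determined by imposing the time-reversal symmetry $u(x,t) = -u(x,100-t)$. For the forward cascade I would adapt a self-similar telescoping construction in the spirit of Depauw \cite{depauw2003non}: on a sequence of shrinking dyadic time intervals $I_n \subset [0,42]$ with $\sum_n |I_n| = 42$, the vector field $u$ is a shear/rotation flow acting at spatial scale $2^{-n}$ that splits each mesocell of side $2^{-n+1}$ into halves and swaps their contents, halving the $L^2$-deviation of $f$ from the mesocell's local average. The scaling is arranged so that $\|u\|_{L^\infty_{t,x}} < \infty$. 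At the inviscid level one verifies by induction on $n$ that $f(\cdot,42) \equiv \int_{\mathbb{T}^2} f_0$; the auxiliary phase preserves this constant (e.g.\ by taking $u \equiv 0$ there), and the backward cascade on $[58,100]$, being the Lagrangian inverse of the forward one in the sense of Definition \ref{lagrangian}, recovers $f(\cdot,100) = f_0$ and hence the symmetry $f(\cdot,t) = f(\cdot,100-t)$.

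For the viscous analysis the first step is to prove a quantitative mixing bound
\begin{equation*}
\bigl\| f^\nu(\cdot,42) - \textstyle\int_{\mathbb{T}^2} f_0 \bigr\|_{L^2(\mathbb{T}^2)} \le \eta(\nu), \qquad \eta(\nu) \xrightarrow{\nu\to 0} 0,
\end{equation*}
via a scale-by-scale energy estimate obtained from Theorem \ref{ADEwellposed}: on $I_n$ the swap contributes a multiplicative contraction factor strictly less than one on local deviations, while the diffusive loss is at worst $O(\nu \cdot 2^{2n} |I_n|)$. One chooses $|I_n|$ so that the aggregate loss over scales $n \lesssim \log(1/\nu)$ is summable and small, and notes that scales $n \gtrsim \log(1/\nu)$ contribute negligibly because pure heat flow has already annihilated residual variations at those scales. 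The same argument restricted to $[0,t]$ with $t < 42$ yields strong $L^p$ convergence to $\int f_0$ and, after interpolation with uniform $L^\infty$ bounds, continuous-in-$L^p$ convergence on $[0,42-\epsilon]$.

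The delicate step is the unmixing phase $[58,100]$. Direct energy estimates on $e^\nu := f^\nu - f$ fail because $f \notin H^1$. Instead I would combine (i) the stochastic Lagrangian representation of $f^\nu$ via the flow of $dX_s = u(X_s,s)\,ds + \sqrt{2\nu}\,dW_s$ run backward from $(t,x)$, which is well posed under the framework cited in the introduction, and (ii) the fact that the deterministic backward trajectory on $[58,100]$ exactly inverts the forward cascade, so that a Brownian perturbation of size $O(\sqrt{\nu})$ only transports a vanishing amount of mass across the dyadic cells used in the construction. Quantifying this gives strong $L^p$ convergence of $f^\nu(\cdot,t)$ to $f(\cdot,t)$ uniformly over $t \in [58,100]$ (locally uniformly away from $t=58$), with a bound depending only on $\|f_0\|_{L^\infty}$. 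Because these estimates are quantitative in $\nu$ rather than compactness-based, the full vanishing viscosity limit (not merely subsequences) equals $f$; the identification of $f$ as a weak solution of \eqref{eqTE} along $u$ then follows by standard passage to the limit in the flux $uf^\nu$ using the strong $L^p$ convergence on $[0,42]\cup[58,100]$ and weak-$\ast$ compactness on $[42,58]$.

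The main obstacle I expect is controlling the unmixing-phase error \emph{uniformly over $f_0 \in L^\infty(\mathbb{T}^2)$}: since $L^\infty$ provides no modulus of continuity, one cannot simply estimate $|f_0(y)-f_0(x)|$ for $|y-x|\lesssim \sqrt\nu$. The resolution must exploit the combinatorial self-similarity of the construction: the diffusive blur introduced at scale $2^{-n}$ during the $n$-th forward mixing stage is transported rigidly by the reverse cascade and refocuses into approximately the same cell from which it originated, rather than dispersing, so the final $L^2$-error is measured only against cell-averaging of $f_0$ at a scale that vanishes with $\nu$. Calibrating the durations $|I_n|$ and the spatial scales so that this refocusing beats the accumulated diffusion over the full time interval, uniformly in $f_0 \in L^\infty$, is the technical heart of the proof.
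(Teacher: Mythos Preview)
Your plan and the paper's differ at a structural level, and the gap in your unmixing argument is real.

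\textbf{What the paper actually does.} The paper does not attempt any scale-by-scale energy estimate or stochastic-Lagrangian tracking. Its mixing building blocks carry an \emph{extra free length parameter} $L$ (Definition~\ref{lagswap}): for each $L$ the block performs the same Lagrangian binary swap at the end-time, but as $L\to\infty$ the velocity field converges weak-$*$ to zero. This decouples the inviscid dynamics (fixed) from the fine spatial scale of $u$ (tunable). The paper then applies its abstract Theorem~\ref{vcontrol}, which rests on Proposition~\ref{ADEtoADE} (Aubin--Lions compactness, not quantitative): one first picks $\nu_n$ small so that $\nu_n$-ADE along $u_n$ is close to inviscid transport along $u_n$, and then chooses $L$ in the next block so large that $u_{n+1}$ is weak-$*$ close enough to $u_n$ that $\nu$-ADE along $u_\infty$ is close to $\nu$-ADE along $u_n$ for all $\nu\ge\nu_n$. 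This gives convergence along the discrete sequence $\nu_n$. To fill in $\nu\in[\nu_{n+1},\nu_n]$, the paper uses a \emph{third} length scale: each new block is spatially supported on a thin strip of width $2^{-\lfloor k/2\rfloor}$ (the index $n$ in $J_{k,n}$), so $u_{n+1}-u_n$ is active only there; on the complement the difference $f^{n+1,\nu}-f^{n,\nu}$ solves the pure heat equation with zero initial data, and a maximum-principle comparison with an explicit $\mathrm{erf}$ supersolution bounds it by $C\|f_0\|_{L^\infty}\sqrt{\nu 2^{-k}}$. No trajectory analysis, no energy cascade bookkeeping, and crucially no need to understand what diffusion does during the reverse cascade: the time-reversed velocity is treated symmetrically and the same localisation/maximum-principle argument handles $[58,100]$.

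\textbf{The gap in your approach.} Your ``refocusing'' claim for the unmixing phase is the problematic step. In a Depauw-type construction without additional structure, the diffusive blur introduced at forward stage $n$ is \emph{not} transported rigidly by the reverse cascade: the reverse map at stage $m<n$ acts on scale $2^{-m}\gg 2^{-n}$ and is discontinuous across those coarser cell boundaries, so mass that has diffused across a $2^{-m}$-interface during the forward or middle phases gets carried to a cell at distance $O(2^{-m})$, not refocused. Summing over $m$ there is no obvious reason the accumulated displacement stays $o(1)$, and the identified obstacle (no modulus of continuity for $f_0\in L^\infty$) then bites. The paper sidesteps this entirely because its control of the viscous limit is never pathwise; it compares the full $\nu$-ADE solution along $u_\infty$ to the $\nu$-ADE solution along a \emph{finitely truncated} $u_n$, and the latter is close to the explicit inviscid solution by Proposition~\ref{ADEtoTE}. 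The extra tunable parameter $L$ and the spatial intermittency $J_{k,n}$ are precisely what make this work uniformly in $f_0\in L^\infty$; your construction, as sketched, has neither.
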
}
	
	\setcounter{subsection}{2}
	\subsection{Entropy of inviscid transport}\label{sec:introentropy}
	It is possible to formulate a physical condition akin to the Markovianalilty of the path measure at the level of weak solutions to the partial differential equation. By formulating an appropriate entropy for the passive scalar $f$, see \cite{dafermos2005hyperbolic}, one may impose the condition of non-decreasing (or, depending on the convention, non-increasing) entropy. In particular, \eqref{eqTE} when written in conservation form is a (linear) scalar hyperbolic conservation law, and so any scalar function $\eta(f)$ (for $\eta:\mathbb{R}\to\mathbb{R}$) is an entropy, \cite{kruvzkov1970first,dafermos2005hyperbolic}. As in \cite{dafermos2005hyperbolic}, one argues that the second law of thermodynamics for physical weak solutions to the fluid model should ensure that the weak solution $f$ of \eqref{eqTE} satisfies
	\begin{equation*}
		\frac{\partial \eta(f)}{\partial t} + \nabla\cdot(u\eta(f))\le 0,
	\end{equation*}
	for any convex function $\eta:\mathbb{R}\to\mathbb{R}$, with the above understood in the sense of distribution. We note that for a renormalised weak solution, Definition \ref{renormalised}, this is an equality, and so the solution preserves entropy.
	
	$\eta(f)$ is often taken to be the `energy' $f^2$. This is because for any symmetric hyperbolic conservation law (see again \cite{dafermos2005hyperbolic}), the energy of the vector field $|v(x,t)|^2$ is an appropriate choice of entropy. The above entropy condition is often re-termed the `local energy inequality' in this context. We mention that in the context of the incompressible Euler equations, the local energy inequality has a separate history of study coming from anomalous dissipation in turbulence and point to the historical works \cite{kolmogorov1941local,constantin1994onsager,de2010admissibility}.
	
	In particular, our construction in Theorem \ref{perfmixvv} violates entropy-admissibility since for example the `bulk energy' $\int_{\mathbb{T}^d} |f(x,t)|^2 \; dx$ is strictly increasing at $t=58$ for any (non-constant) initial datum.
	
	This also highlights that vanishing viscosity does not produce a Markovian path measure along $u$. Instead, it remains an open problem to construct a suitable such measure (specifically, a Markov regular Lagrangian flow, \cite{Ambrosio2014}), or even entropy-admissible weak solutions to the initial value problem, for an arbitrary velocity field $u$. Indeed, even the existence of a solution whose bulk energy is non-increasing remains beyond the current literature for a general bounded, divergence-free vector field $u \in L^\infty(\mathbb{T}^d\times[0,T];\mathbb{R}^d)$.
	
	\subsection{Previous work on anomalous dissipation}
	That neither the bulk, nor local, energy inequalities are enforced in the vanishing viscosity limit of the passive scalar transport equation is also significant in the context of the recent work on anomalous dissipation.
	
	The limiting solution in Theorem \ref{perfmixvv} exhibits anomalously, i.e. without viscosity, a dissipation of energy (and later a reverse of that dissipation). In contrast, as originally stated by Kolmogorov as the fundamental assumption used to derive the energy cascade of hydrodynamic turbulence (see \cite{kolmogorov1941local}), anomalous dissipation is the assumption that the vector field $v_\nu$ (which should solve the incompressible Navier-Stokes equation with viscosity $\nu>0$) should satisfy
	\begin{equation*}
		\lim_{\nu\to0} \nu \int_{\mathbb{T}^d\times[0,T]} |\nabla v_\nu(x,t)|^2\;dxdt > 0.
	\end{equation*}
	
	This is motivated by the energy balance (for classical solutions) to the Navier-Stokes equations,
	\begin{equation*}
		\int_{\mathbb{T}^d} |v_\nu(x,T)|^2 \;dx = \int_{\mathbb{T}^d} |v_\nu(x,0)|^2 \;dx - 2\nu \int_{\mathbb{T}^d\times[0,T]} |\nabla v_\nu(x,t)|^2\;dxdt,
	\end{equation*}
	and so in particular if there is anomalous dissipation in the sense of Kolmogorov then the inviscid limit cannot conserve energy.
	
	There has been a great deal of recent work on the analogue of this phenomenon for the passive scalar transport equation, see for example \cites{corrsin1951spectrum,obukhov1949,drivas2019anomalous,armstrong2023anomalous,scoop}. 
	
	Similarly, in the context of passive scalars, if one can ensure that
	\begin{equation*}
		\lim_{\nu\to0} \nu \int_{\mathbb{T}^d\times[0,T]} |\nabla f_\nu(x,t)|^2\;dxdt > 0,
	\end{equation*}
	then the inviscid limit must dissipate energy. It is important to note that, unlike the unmixing construction of Theorem \ref{perfmixvv}, energy lost by such enhanced diffusion (see for example \cites{constantin2008diffusion,bedrossian2021almost,elgindi2023optimal} on this topic) may not be recovered, and indeed anomalous dissipation gives an arrow of time on inviscid solutions. However, if the inviscid limit fails to converge strongly, additional energy will dissipate. None of the examples in the current literature exhibiting anomalous dissipation for passive scalar transport \cites{drivas2019anomalous,scoop,armstrong2023anomalous} show that strong convergence of the inviscid limit fails, and whether further dissipation occurred or is even possible was unknown. Theorem \ref{perfmixvv} answers these questions, proving in particular that mixing does not imply anomalous dissipation.
	
	Correspondingly, whether the dissipation of energy in the infinite Reynolds number limit of full hydrodynamic turbulence is due solely to anomalous dissipation, or whether there is additional dissipation through weak convergence as above, and whether the arrow of time given by a local energy inequality for the incompressible Euler equations should hold in the vanishing viscosity limit of Navier-Stokes, remain critical open questions. In particular, wild solutions (those violating the local energy inequality) to the Euler equations by now have a long history of study \cites{scheffer1993inviscid,shnirelman1997nonuniqueness,constantin1994onsager,de2013dissipative,buckmaster2017onsager,buckmaster2019nonuniqueness}. We highlight that it remains an open question whether such solutions arise in the vanishing viscosity limit of strong solutions to the Navier-Stokes equations. Our result answers the analogue of this question for the passive scalar transport equation in the affirmative.
	
	\subsection{Previous work on non-uniqueness}
	Part of this recent work on anomalous dissipation, see in particular \cite{scoop,armstrong2023anomalous}, similarly deduces non-uniqueness of the vanishing viscosity limit as we show in Theorem \ref{vvtheorem}. The authors of \cite{scoop} show the non-uniqueness of weak limit points of vanishing viscosity for a single initial datum. This result is intriguing since one viscosity subsequence converges weakly to a solution dissipating energy and the other to a solution that preserves energy. Meanwhile, the authors of \cite{armstrong2023anomalous} deduce non-uniqueness of weak limit points for all initial data in $H_x^1$, with the required subsequences depending on the $H_x^1$-norm of the initial datum. In contrast, in Theorem \ref{vvtheorem}, we show non-uniqueness for viscosity subsequences independent of the initial datum, and so also for initial datum outside of $H_x^1$, and with the limits converging strongly to renormalised/entropy-preserving weak solutions.
	
	\subsection{Further comments}
	In this paper, we develop a simple framework (different to homogenisation, \cite{armstrong2023anomalous}, or stochastic calculus, \cite{scoop}) to give control of the vanishing viscosity limit, see Section \ref{vvLimit} and specifically Theorem \ref{vcontrol}. Despite diffusion not acting directly on the background velocity field $u$, diffusion of $f$ gives an indirect emergent smoothing of $u$. Therefore, we can approximate a weak solution $f$ to \eqref{eqADE} by inviscid transport \eqref{eqTE} along the vector field $\bar{u}$ which smooths spatial scales of $u$ below a dissipation length.
	
	The construction of Theorem \ref{vvtheorem} relies on an arrangement of alternating shear flows obeying a specific commutation property, Proposition \ref{shearcanc}. By playing with a second length scale independent of this commutation property, specific viscosity turns on only some of these shear flows. By design, the global effect switches back and forth between null and a large-scale shear as viscosity vanishes. We mention that alternating shear flows have been the basis of numerous recent constructions for perfect mixing and enhanced and anomalous dissipation \cites{alberti2019exponential,drivas2019anomalous,scoop,armstrong2023anomalous,elgindi2023optimal,elgindi2023norm}.
	
	The construction of Theorem \ref{perfmixvv} relies instead on moving slabs to mix the passive scalar for any initial datum, reminiscent of work on non-uniqueness of trajectories, \cite{aizenman1978vector}. To ensure unique convergence of the vanishing viscosity limit in-between the well-adapted viscosity subsequence given by Theorem \ref{vcontrol}, one introduces a third intermittency length scale to control the error. This technique is perhaps reminiscent of the intermittency of small-scale structures in hydrodynamic turbulence.
	
	The analysis of Section \ref{vvLimit} is non-quantitative, relying only on norm convergence of certain limits, and so hold for other regularisation of the passive scalar transport equation such as mollification of the velocity field, or hyperviscosity on the passive scalar. Thus, while some of the details of Theorem \ref{vvtheorem} and Theorem \ref{perfmixvv} are specific to vanishing viscosity, analogous results hold for other vanishing regularisations.
	
	Such vanishing regularisation methods are the only tools in the literature for solving the initial value problem outside of classical well-posedness classes, \cite{diperna1989ordinary,ambrosio2004transport}. Meanwhile, our results show that this is insufficient to give a unique selection or even the existence of `physically relevant' solutions. This raises the necessity of alternative approaches to solving the issues of selection and the existence of physical solutions.

	\subsection{Outline of paper}
	Section \ref{Notation} introduces the standard notation and definitions used throughout the remainder of this paper.
	
	Section \ref{Background} contains necessary background uniqueness, existence, and regularity results for the transport equation and advection-diffusion equation.
	
	In Section \ref{vvLimit}, we give the general framework (and intuition) used to control the vanishing viscosity limit in the proofs of Theorems \ref{vvtheorem}, \ref{perfmixvv}.
	
	Section \ref{Nonuniqueness} is then devoted to the construction and proof of Theorem \ref{vvtheorem}.
	
	Section \ref{Inadmissibility} is instead devoted to the construction and proof of Theorem \ref{perfmixvv}, and may be read independent of Section \ref{Nonuniqueness}.
	
	\section{Notation and definitions}\label{Notation}
	
	\subsection{Notation}
	Denote by $\mathbb{T}^d = \mathbb{R}^d / \mathbb{Z}^d$ the $d$-dimensional unit torus. Throughout this paper, we work on the spatial domain $\mathbb{T}^d$, or on the spatiotemporal domain $\mathbb{T}^{d}\times [0,T]$ for some $T>0$. The gradient operator $\nabla$ and Laplacian $\Delta$ will act on spatial coordinates only.
	
	If the dimension $d$ and time interval $[0,T]$ are implicit we use the shorthand notation $L^pL^q$ for $L^p\left([0,T];L^q(\mathbb{T}^d)\right)$ and similar spaces. Meanwhile, $L^q(\mathbb{T}^d)$ is shorthand for scalar functions $L^q(\mathbb{T}^d;\mathbb{R})$.
	
	For integrability exponents $p,q,... \in [1,\infty]$ (which are allowed to be infinite unless otherwise stated).
	
	For a positive integer $n\in\mathbb{N}$ we denote the homogeneous Sobolev space
	\begin{equation*}
		\dot{H}^n(\mathbb{T}^d)=\left\{f\in L^2(\mathbb{T}^d):\int_{\mathbb{T}^d}f(x)\;dx=0, \hbox{ and } (-\Delta)^{\frac{n}{2}}f\in L^2(\mathbb{T}^d)\right\},
	\end{equation*}
	where the fractional Laplacian $(-\Delta)^{\frac{n}{2}}$ is defined in terms of the Fourier transform/series $\mathcal{F}$ by $\mathcal{F}((-\Delta)^{\frac{n}{2}}f) = |\xi|^n \mathcal{F}(f)$. We denote the norm in $\dot{H}^n(\mathbb{T}^d)$ by
	\begin{equation*}
		\left\|f\right\|_{\dot{H}^n(\mathbb{T}^d)} = \left\|(-\Delta)^{\frac{n}{2}}f\right\|_{L^2(\mathbb{T}^d)},
	\end{equation*}
	for all $f \in \dot{H}^n(\mathbb{T}^d)$. Moreover, we define the Sobolev space
	\begin{equation*}
		H^n(\mathbb{T}^d) = \left\{f\in L^2(\mathbb{T}^d):(-\Delta)^{\frac{n}{2}}f\in L^2(\mathbb{T}^d)\right\},
	\end{equation*}
	with the norm
	\begin{equation*}
		\left\|f\right\|_{H^n(\mathbb{T}^d)} = \left\|f\right\|_{L^2(\mathbb{T}^d)} + \left\|(-\Delta)^{\frac{n}{2}}f\right\|_{L^2(\mathbb{T}^d)}.
	\end{equation*}
	Furthermore, we denote by $H^{-n}(\mathbb{T}^d)$ the dual space of $H^n(\mathbb{T}^d)$.
	
	For each $p \in [1,\infty]$ we define also the space
	\begin{equation*}
		C^0L^p = \left\{f:[0,T]\to L^p(\mathbb{T}^d) \text{ such that $f$ is continuous}\right\} \subset L^\infty L^p,
	\end{equation*}
	with the norm
	\begin{equation*}
		\left\|f\right\|_{C^0L^p} = \sup_{0\le t \le T} \left\|f(\cdot,t)\right\|_{L^p(\mathbb{T}^d)}.
	\end{equation*}
	Moreover, we define the linear space
	\begin{equation*}
		C_{\mathrm{weak}}^0L^p = \left\{f:[0,T]\to L^p(\mathbb{T}^d) \text{ such that $f$ is continuous in $L^p_{\mathrm{weak}}$}\right\} \subset L^\infty L^p,
	\end{equation*}
	and for $p \in (1,\infty]$ the linear space
	\begin{equation*}
		C_{\mathrm{weak-}*}^0L^p = \left\{f:[0,T]\to L^p(\mathbb{T}^d) \text{ such that $f$ is continuous in $L^p_{\mathrm{weak-}*}$}\right\} \subset L^\infty L^p.
	\end{equation*}
	For $p \in (1,\infty)$, $L^p(\mathbb{T}^d)$ is reflexive and so these two definitions coincide. However, we commonly take $p=\infty$. Hence, we choose the notation $C_{\mathrm{weak-}*}^0L^p$ and take care of the case when $p=1$.
	\vspace{\baselineskip}
	
	All results will be stated and proved for the compact domains $\mathbb{T}^d$ and $[0,T]$. The exact statements do not necessarily hold when we replace $\mathbb{T}^d$ with $\mathbb{R}^d$, or $[0,T]$ with $[0,\infty)$, though analogues can indeed be found.
	\begin{definition}[Weak solution to the transport equation \eqref{eqTE}]\label{TE}
		Consider a vector field $u\in L^1\left([0,T];L^1(\mathbb{T}^d;\mathbb{R}^d)\right)$ with $\nabla\cdot u=0$ in the distributional sense.
		
		We say $f\in L^1\left([0,T];L^1(\mathbb{T}^d;\mathbb{R})\right)$ with $uf \in L^1L^1$ is a weak solution to the transport equation along $u$
		\begin{equation}\label{eqTE}
			\frac{\partial f}{\partial t} + \nabla \cdot (u f) = 0, \tag{TE}
		\end{equation}
		with initial data $f_0 \in L^1(\mathbb{T}^d)$ if, for any $\phi \in C_c^\infty(\mathbb{T}^d\times[0,T))$,
		\begin{equation*}
			\int_{\mathbb{T}^d\times[0,T)} f \left(\frac{\partial \phi}{\partial t} + u\cdot\nabla \phi\right) \; dxdt = - \int_{\mathbb{T}^d} f_0 \phi_0 \; dx,
		\end{equation*}
		where $\phi_0(x) = \phi(x, 0)$.
		
		Meanwhile, we say the transport equation is satisfied on an \textit{open} interval $I\subset(0,T)$ if, for any $\phi \in C_c^\infty(\mathbb{T}^d\times I)$,
		\begin{equation*}
			\int_{\mathbb{T}^d\times I} f \left(\frac{\partial \phi}{\partial t} + u\cdot\nabla \phi\right) \; dxdt = 0.
		\end{equation*}
	\end{definition}
	
	\begin{remark}
		In the above definition, we may equivalently take the test function $\phi$ to be Lipschitz in time and space with compact support in $\mathbb{T}^d \times [0,T)$. This is done by finding a sequence of smooth functions $\phi_n \in C_c^\infty(\mathbb{T}^d\times[0,T))$ such that $\phi_n$ are bounded in $C^1$ by the Lipschitz norm of $\phi$, and $\frac{\partial \phi_n}{\partial t} \xrightarrow{n\to\infty} \frac{\partial \phi}{\partial t}$, $\nabla \phi_n \xrightarrow{n\to\infty} \nabla \phi$ pointwise almost everywhere.
	\end{remark}
	
	\begin{definition}[Renormalised weak solutions to \eqref{eqTE}]\label{renormalised}
		Following the definition introduced in \cite{diperna1989ordinary}, suppose $f$ is a weak solution to \eqref{eqTE} along $u$ with initial data $f_0$.
		
		If, for any $\beta \in C_b^0(\mathbb{R})$, $\beta(f)$ is a weak solution to \eqref{eqTE} along $u$ with initial data $\beta(f_0)$, then we say $f$ is a renormalised weak solution of \eqref{eqTE}.
	\end{definition}
	
	\begin{remark}
		This definition is well motivated by the expression $\left( \frac{\partial}{\partial t} + u \cdot \nabla \right) \beta(f) = \beta'(f)\left( \frac{\partial}{\partial t} + u \cdot \nabla \right) f$ when $\beta$ is a differentiable function. Indeed, some authors require $\beta \in C^1(\mathbb{R})$ as in \cite{Ambrosio2014}, or even with decay at infinity as in \cite{diperna1989ordinary}. In our case ($\nabla \cdot u = 0$), it is straightforward to show that these give equivalent definitions.
	\end{remark}
	
	\begin{definition}[Lagrangian flows and Lagrangian solutions to \eqref{eqTE}]\label{lagrangian}
		For a divergence-free vector field $u \in L^1([0,T];L^1(\mathbb{T}^d;\mathbb{R}^d))$, we say a family ($t \in [0,T])$ of Lebesgue-measure preserving bijections $y_t : \mathbb{T}^d \to \mathbb{T}^d$ is a Lagrangian flow along $u$ if for a.e. $x\in \mathbb{T}^d$ the map $t \mapsto y_t(x)$ is absolutely continuous and the derivative satisfies $\frac{d y_t(x)}{dt} = u(y_t(x),t)$ as a function class in $L^1([0,T];L^1(\mathbb{T}^d;\mathbb{R}^d))$.
		
		We say that a function $f \in L^1([0,T]; L^1(\mathbb{T}^d;\mathbb{R}))$ is a Lagrangian solution to \eqref{eqTE} along $u$ (with initial data $f_0 \circ y_0^{-1}$) if $f(\cdot, t) = f_0 \circ y_t^{-1}$ for $f_0\in L^1(\mathbb{T}^d)$, and $\{y_t\}_{t\in[0,T]}$ a Lagrangian flow along $u$. It is often convenient to take $y_0=\mathrm{Id}$ without loss of generality.
	\end{definition}
	
	\begin{remark}\label{lagrenorm}
		One may show that, if additionally $uf \in L^1 L^1$, then a Lagrangian solution to \eqref{eqTE} along $u$ with initial data $f_0$ is also a weak solution in the sense of Definition \ref{TE}. We reduce the problem to bounded $f$ and $f_0$ by the point-wise approximation with $f 1_{|f|\le k}$, so that $u f 1_{|f|\le k} \xrightarrow{k\to\infty} uf$ in $L^1L^1$ by dominated convergence. Observe then that $f 1_{|f|\le k}$ are already Lagrangian solutions for the initial data $f_0 1_{|f_0|\le k}$, since $(f_0 \circ y_t^{-1}) 1_{|f_0 \circ y_t^{-1}| \le k} = (f_0 1_{|f_0|\le k}) \circ y_t^{-1}$. The result for bounded $f$ and $f_0$ follows from changing variables in the integral \[\int_{\mathbb{T}^d\times[0,T)} f_0(y_t^{-1}(x)) \left(\frac{\partial \phi}{\partial t} + u\cdot\nabla \phi\right) \; dxdt,\] and using that $\left(\frac{\partial \phi}{\partial t} + u\cdot\nabla \phi\right)(y_t(x)) = \frac{\partial}{\partial t}(\phi(y_t(x), t))$ by chain rule for absolutely continuous functions. Moreover, since for any $\beta \in C_b^0(\mathbb{R})$ we can rewrite $\beta(f_0 \circ y_t^{-1}) = \beta(f_0) \circ y_t^{-1}$, these solutions are then also renormalised weak solutions in the sense of Definition \ref{renormalised}.
	\end{remark}
	
	\begin{definition}[Weak solution to the advection-diffusion equation \eqref{eqADE}]\label{ADE}
		Consider a vector field $u\in L^1\left([0,T];L^1(\mathbb{T}^d;\mathbb{R}^d)\right)$ with $\nabla\cdot u=0$ in the distributional sense, and some positive constant viscosity $\nu > 0$ (also called diffusivity).
		
		We say $f\in L^1\left([0,T];L^1(\mathbb{T}^d;\mathbb{R})\right)$ with $uf \in L^1L^1$ is a weak solution to the advection-diffusion equation along $u$
		\begin{equation}\label{eqADE}
			\frac{\partial f}{\partial t} + \nabla \cdot (u f) - \nu \Delta f = 0 \tag{$\nu$-ADE},
		\end{equation}
		with initial data $f_0 \in L^1(\mathbb{T}^d)$, if for any $\phi \in C_c^\infty(\mathbb{T}^d\times[0,T))$,
		\begin{equation*}
			\int_{\mathbb{T}^d\times[0,T]} f \left(\frac{\partial \phi}{\partial t} + u\cdot\nabla \phi + \nu \Delta \phi\right) \; dxdt = - \int_{\mathbb{T}^d} f_0 \phi_0 \; dx,
		\end{equation*}
		where $\phi_0(x) = \phi(x,0)$.
		
		Meanwhile, we say the advection-diffusion equation \eqref{eqADE} is satisfied on an \textit{open} interval $I\subset(0,T)$ if, for any $\phi \in C_c^\infty(\mathbb{T}^d\times I)$,
		\begin{equation*}
			\int_{\mathbb{T}^d\times I} f \left(\frac{\partial \phi}{\partial t} + u\cdot\nabla \phi + \nu\Delta\phi \right) \; dxdt = 0.
		\end{equation*}
	\end{definition}
	
	\begin{definition}[Vanishing viscosity solution to \eqref{eqTE}]\label{vvADE}
		
		Consider a vector field $u\in L^1\left([0,T];L^1(\mathbb{T}^d;\mathbb{R}^d)\right)$ with $\nabla\cdot u=0$ in the distributional sense.
		
		We say a weak solution $f$ to \eqref{eqTE} along $u$ with initial data $f_0 \in L^1(\mathbb{T}^d)$ is a vanishing viscosity solution if there exists a positive sequence $\nu_n\to0$ and corresponding weak solutions $f^{(n)}$ to \eqref{eqADE} with viscosity $\nu_n$ along $u$ with initial data $f_0$, such that
		\begin{equation*}
			f^{(n)} \xrightharpoonup{n \to \infty} f,
		\end{equation*}
		converges as distributions in $\mathcal{D}'(\mathbb{T}^d\times[0,T))$.
	\end{definition}
	
	\begin{remark}
		In the literature $f^{(n)}$ is often taken to have, in addition, non-constant initial data $f_0^{(n)}$, where $f_0^{(n)}$ converges to $f_0$ in a suitable topology. We do not consider such a more general definition, but we note that under strong convergence of $f_0^{(n)}\to f_0$, the appropriate analogue of our main results, Theorem \ref{vvtheorem} and Theorem \ref{perfmixvv}, follow by comparing between the viscous solutions with initial data $f^{(n)}_0$ and $f_0$. This can be done by equality \eqref{eqlpineq} in Theorem \ref{ADEwellposed}.
	\end{remark}
	
	\section{Necessary background}\label{Background}
	The following is by no means a complete exposition of standard existence, uniqueness, and regularity theory for \eqref{eqTE} and \eqref{eqADE}, but contains some of the more salient points, and in particular those relevant to this paper.
	
	The exact statements are adapted for this paper, with similar results, and similar methods of proof, found in the literature.
	
	We fix a finite time interval $[0,T]$ throughout.
	
	\begin{theorem}[Weak continuity of transport]\label{weakcont}
		Suppose $f$ is a weak solution to \eqref{eqADE} or \eqref{eqTE} (i.e. $\nu=0$) along $u \in L^1 L^1$ with initial data $f_0$.
		
		Then for any $\phi\in C^\infty(\mathbb{T}^d\times[0,T])$, for a.e. $t \in [0,T]$,
		\begin{multline}
			\text{(Trace Formula)\quad} \int_{\mathbb{T}^d} f(\cdot, t) \phi(\cdot, t) \;dx \\ = \int_{\mathbb{T}^d} f_0 \phi_0 \; dx + \int_{\mathbb{T}^d\times[0,t]} f\left(\frac{\partial\phi}{\partial t}+u\cdot\nabla\phi +\nu\Delta \phi\right) dxdt, \label{eqtrace}
		\end{multline}
		
		Suppose further that $f \in L^\infty L^p$ for $p \in (1, \infty]$, then (there is a representation of $f(x,t)$ with) $f \in C_{\mathrm{weak-}*}^0L^p$, such that \eqref{eqtrace} holds for all $t \in [0,T]$.
		
		In particular $f(\cdot, 0) = f_0$ in $L^p(\mathbb{T}^d)$.
	\end{theorem}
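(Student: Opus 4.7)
The plan is to establish the trace formula first in an almost-everywhere sense by plugging suitable cut-off test functions into the definition, then upgrade to pointwise-in-$t$ validity by building a good representative of $f$ under the extra hypothesis $f \in L^\infty L^p$.

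For the a.e.\ trace formula, I would fix $\phi \in C^\infty(\mathbb{T}^d \times [0,T])$ and choose a family of temporal cutoffs $\chi_\epsilon \in C_c^\infty([0,T))$ with $\chi_\epsilon \equiv 1$ on $[0,t]$ and $\chi_\epsilon \equiv 0$ on $[t+\epsilon,T]$, so that $\phi \chi_\epsilon$ is an admissible test function. Plugging into Definition~\ref{TE} or \ref{ADE} and separating the $\partial_s(\phi\chi_\epsilon) = (\partial_s\phi)\chi_\epsilon + \phi\chi_\epsilon'$ contribution, the first group of terms converges by dominated convergence (using $uf \in L^1 L^1$, $f \in L^1 L^1$, and the boundedness of derivatives of $\phi$) to the spatiotemporal integral appearing in \eqref{eqtrace} over $[0,t]$, while $-\chi_\epsilon'$ is a positive approximate identity concentrated near $t$. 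By the Lebesgue differentiation theorem applied to the $L^1([0,T])$ function $s \mapsto \int_{\mathbb{T}^d} f(\cdot,s)\phi(\cdot,s)\,dx$, the remaining term converges to $\int_{\mathbb{T}^d} f(\cdot,t)\phi(\cdot,t)\,dx$ for a.e.\ $t$, proving \eqref{eqtrace} outside a null set $N_\phi$. Choosing a countable dense family $\{\phi_k\}$ in $C^\infty(\mathbb{T}^d\times[0,T])$ gives a single full-measure set $E = [0,T]\setminus \bigcup N_{\phi_k}$ on which the formula holds for every $\phi_k$.

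Under the extra hypothesis $f \in L^\infty L^p$, I would build the continuous representative as follows. For time-independent $\psi \in C^\infty(\mathbb{T}^d)$, the trace formula displays $t \mapsto \int f(\cdot,t)\psi\,dx$ as the indefinite integral of an $L^1([0,T])$ function, hence admits an absolutely continuous representative $F_\psi(t)$. The a.e.\ identity $|F_\psi(t)| \le \|f\|_{L^\infty L^p}\|\psi\|_{L^{p'}}$ extends by continuity to all $t$, so $\psi \mapsto F_\psi(t)$ is a bounded linear functional on a dense subspace of $L^{p'}(\mathbb{T}^d)$. Since $p \in (1,\infty]$ gives $p' \in [1,\infty)$, this functional is represented by a unique element $\tilde f(\cdot,t) \in L^p(\mathbb{T}^d)$ (for $p=\infty$ using that the dual of $L^1$ is $L^\infty$), and the resulting map $t\mapsto \tilde f(\cdot,t)$ agrees with $f$ on $E$. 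Approximating any $\psi \in L^{p'}$ by $\psi_n \in C^\infty$, the maps $t\mapsto \langle \tilde f(\cdot,t),\psi_n\rangle$ are continuous and converge uniformly in $t$ (with rate $\|f\|_{L^\infty L^p}\|\psi-\psi_n\|_{L^{p'}}$) to $t\mapsto \langle \tilde f(\cdot,t),\psi\rangle$, which is therefore continuous, yielding $\tilde f \in C^0_{\mathrm{weak-}*}L^p$.

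It remains to promote the trace formula and the initial condition to hold everywhere for this representative. Both sides of \eqref{eqtrace} are continuous in $t$: the right-hand side is absolutely continuous because the integrand lies in $L^1([0,T])$, and the left-hand side is continuous because $\tilde f(\cdot,t_n) \to \tilde f(\cdot,t)$ weak-$*$ in $L^p$ while $\phi(\cdot,t_n) \to \phi(\cdot,t)$ strongly in $L^{p'}$ (the standard weak-$*$/strong pairing argument, using uniform $L^p$ bounds on $\tilde f$). Since they agree on the dense set $E$, they coincide on $[0,T]$. Finally, setting $t=0$ with $\phi$ time-independent gives $\langle \tilde f(\cdot,0),\psi\rangle = \langle f_0,\psi\rangle$ for every $\psi \in C^\infty(\mathbb{T}^d)$, hence $\tilde f(\cdot,0)=f_0$ in $L^p$.

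The only delicate point is handling the endpoint $p=\infty$, where $L^{p'}=L^1$ is not reflexive; this is precisely why the theorem states \emph{weak-$*$} continuity, and the argument still goes through because $L^1(\mathbb{T}^d)$ is separable and its dual is $L^\infty(\mathbb{T}^d)$.
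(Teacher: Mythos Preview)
Your proposal is correct and follows essentially the same approach as the paper: establish the trace formula a.e.\ by testing against products $\phi\psi$ with $\psi$ a temporal function, then build a weak-$*$ continuous representative from the right-hand side of \eqref{eqtrace} using the uniform $L^p$ bound and duality, and finally upgrade to all $t$ by continuity of both sides. The only cosmetic difference is that the paper computes the distributional time derivative of $t\mapsto\int_{\mathbb{T}^d}f(\cdot,t)\phi(\cdot,t)\,dx$ directly (testing against arbitrary $\psi\in C_c^\infty([0,T))$ and invoking the standard fact that an $L^1$ function with $L^1$ distributional derivative is absolutely continuous), whereas you use explicit cutoffs $\chi_\epsilon$ together with Lebesgue differentiation; these are equivalent standard devices. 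Your detour through a countable dense family $\{\phi_k\}$ is harmless but unnecessary, since once the weak-$*$ continuous representative $\tilde f$ is built, the trace formula for any fixed $\phi$ already holds on a full-measure set (depending on $\phi$), and continuity of both sides in $t$ then extends it to all of $[0,T]$.
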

	\begin{remark}
		We remark without proof that the analogous result holds when $p=1$ if we additionally assume $f(\cdot,t): [0,T] \to L^1(\mathbb{T}^d)$ is uniformly integrable in the indexing variable $t$ (for a.e. $t\in[0,T]$).
	\end{remark}
	\begin{proof}
		For any weak solution of \eqref{eqTE} or \eqref{eqADE}, we have $f, uf \in L^1L^1$.
		
		Fix some $\phi \in C^\infty\left(\mathbb{T}^d \times [0,T]\right)$ and consider the following $L^1([0,T])$ function of $t \in [0,T]$,
		\begin{equation}\label{eq23}
			\int_{\mathbb{T}^d} f(\cdot, t) \phi(\cdot, t) \; dx.
		\end{equation}
		
		Then for all $\psi \in C_c^\infty([0,T))$, by Definitions \ref{TE}, \ref{ADE},
		\begin{multline*}
			\int_{\mathbb{T}^d \times [0,T)} f \phi \frac{d\psi}{ds} \; dxds \\
			\begin{aligned}[t]
				& = \int_{\mathbb{T}^d \times [0,T)} f \phi \frac{d\psi}{ds} \; dxds - \left(\int_{\mathbb{T}^d} f_0 \phi_0 \psi_0 \; dx + \int_{\mathbb{T}^d\times[0,T)} f\left(\frac{\partial}{\partial s}+u\cdot\nabla + \nu \Delta \right)(\phi \psi(s)) \; dxds \right) \\
				& = - \int_{\mathbb{T}^d} f_0 \phi_0 \psi_0 \; dx - \int_{\mathbb{T}^d\times[0,T)} f\left(\frac{\partial\phi}{\partial s}+u\cdot\nabla\phi + \nu \Delta \phi\right) \psi(s) \; dxds,
			\end{aligned}
		\end{multline*}
		where $\psi_0=\psi(0)$ and $\phi_0(x)=\phi(x,0)$.
		
		And so the function defined in \eqref{eq23} is an absolutely continuous function of $t \in [0,T]$, with derivative $\int_{\mathbb{T}^d} f(x, t)\left(\frac{\partial\phi}{\partial s}+u\cdot\nabla\phi + \nu \Delta \phi\right)(x, t) \; dx$, and the initial value at $t=0$ is $\int_{\mathbb{T}^d} f_0 \phi_0 \; dx$, see Chapter 3, Lemma 1.1 in \cite{temam1979}. That is \eqref{eqtrace} holds for a.e. $t \in [0,T]$.
		
		Suppose now, for some $p \in (1,\infty]$, that $f \in L^\infty L^p$. Define for each $t \in [0,T]$ the distribution $F_t\in\mathcal{D}'(\mathbb{T}^d)$ acting on test functions $\chi \in C^\infty(\mathbb{T}^d)$, given by
		\begin{equation*}
			\left\langle F_t,\chi\right\rangle = \int_{\mathbb{T}^d} f_0 \chi \; dx + \int_{\mathbb{T}^d\times[0,t]} f\left(u\cdot\nabla\chi +\nu\Delta \chi\right) dxdt.
		\end{equation*}
		
		Then, thanks to \eqref{eqtrace}, for a.e. $t \in [0,T]$, $F_t = f(\cdot, t)$ as distributions in $\mathcal{D}'(\mathbb{T}^d)$, which is assumed uniformly bounded in $L^p(\mathbb{T}^d)$. We therefore have for a.e. $t \in [0,T]$, for all $\chi \in C^\infty(\mathbb{T}^d)$, the bound $|F_t(\chi)| \le \left\|f\right\|_{L^\infty L^p}\left\|\chi\right\|_{L^q}$ with $\frac{1}{p}+\frac{1}{q}=1$. By the absolute continuity of $t\mapsto \left\langle F_t,\chi\right\rangle$, the bound, in fact, holds for all $t \in [0,T]$. Since $p\in(1,\infty]$ this implies further that $F_t$ can be extended to a function $\bar{f}(\cdot, t) \in L^p(\mathbb{T}^d)$ for all $t \in [0,T]$, and the absolute continuity of $\left\langle F_t, \chi \right\rangle$ implies $\bar{f} \in C_{\mathrm{weak-}*}^0L^p$ with \eqref{eqtrace} holding for the representative $\bar{f}$ now for all $t \in [0,T]$, as required.
	\end{proof}
	
	\begin{remark}
		An important application of this is that for $t \in [0, T]$, and any $\phi \in C_c^\infty\left(\mathbb{T}^d \times [t,T)\right)$,
		\begin{equation*}
			\int_{\mathbb{T}^d\times[t,T]} f\left(\frac{\partial\phi}{\partial t}+u\cdot\nabla\phi +\nu\Delta \phi\right) dxdt = - \int_{\mathbb{T}^d} f(\cdot, t) \phi(\cdot, t) \;dx.
		\end{equation*}
		So if $f\in C_{\mathrm{weak-}*}^0L^p$ is a weak solution to \eqref{eqTE}/\eqref{eqADE} along $u$ with initial data $f_0$, then it is also a weak solution to \eqref{eqTE}/\eqref{eqADE} along $u$ on the time interval $[t, T]$ with initial data $f(\cdot, t)$.
		
		This in particular allows us to say that $f\in C_{\mathrm{weak-}*}^0L^p$ is a weak solution to \eqref{eqTE}/\eqref{eqADE} with initial data $f_0$ if and only if it is a weak solution to \eqref{eqTE}/\eqref{eqADE} on $[0,t]$ with initial data $f_0$, and on $[t, T]$ with initial data $f(\cdot, t)$. That is, we have transitivity in the transport equation.
	\end{remark}
	
	\begin{theorem}[Existence for \eqref{eqTE} and \eqref{eqADE}]\label{TEexistence}
		Suppose $f_0 \in L^\infty(\mathbb{T}^d)$, and $u\in L^1([0,T];L^1(\mathbb{T}^d;\mathbb{R}^d))$ is divergence-free in the distributional sense, then there exists a weak solution $f \in C_{\mathrm{weak-}*}^0 L^\infty$ to \eqref{eqTE}/\eqref{eqADE} along $u$ with initial data $f_0$. Moreover, for all $p \in [1,\infty]$, $t \in [0,T]$,
		\begin{gather}
			\text{(Initial $L^p$-Inequality)\quad} \left\|f(\cdot, t)\right\|_{L^p(\mathbb{T}^d)} \le \left\|f_0\right\|_{L^p} \label{eqinitineq}, \\
			\text{(Conservation of Mass)\quad} \int_{\mathbb{T}^d} f(\cdot, t) \; dx = \int_{\mathbb{T}^d} f_0 \; dx \label{eqconsmass}.
		\end{gather}
	\end{theorem}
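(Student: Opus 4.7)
The plan is to construct $f$ as a weak-$*$ limit of classical solutions to regularised problems. I would first mollify $u$ spatially to obtain smooth, divergence-free $u_n$ with $u_n \to u$ strongly in $L^1L^1$, and mollify $f_0$ to produce smooth $f_0^n$ with $f_0^n \xrightharpoonup{*} f_0$ weak-$*$ in $L^\infty(\mathbb{T}^d)$ and $\|f_0^n\|_{L^p(\mathbb{T}^d)} \le \|f_0\|_{L^p(\mathbb{T}^d)}$ for every $p \in [1,\infty]$. For each $n$ a classical solution $f_n$ of the regularised equation exists: for \eqref{eqTE} via the method of characteristics along the (measure-preserving, since $\nabla\cdot u_n = 0$) flow $\Phi_{n,t}$ of $u_n$, and for \eqref{eqADE} by standard parabolic theory. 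Either the representation $f_n(\cdot, t) = f_0^n \circ \Phi_{n,t}^{-1}$, or the $L^p$ energy estimate $\tfrac{d}{dt}\|f_n\|_{L^p}^p = -\nu p(p-1)\int |f_n|^{p-2}|\nabla f_n|^2\,dx \le 0$ (with the $p=\infty$ case by the parabolic maximum principle), together with integrating the PDE against $\phi \equiv 1$, yields the bounds $\|f_n(\cdot,t)\|_{L^p} \le \|f_0\|_{L^p}$ and $\int f_n(\cdot,t)\,dx = \int f_0^n\,dx$ for all $t \in [0,T]$ and all $p \in [1,\infty]$.

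The uniform bound $\|f_n\|_{L^\infty L^\infty} \le \|f_0\|_{L^\infty}$ combined with Banach--Alaoglu then yields, along a subsequence, $f_n \xrightharpoonup{*} f$ in $L^\infty L^\infty$. To pass to the limit in the weak formulation of \eqref{eqTE}/\eqref{eqADE}, the only delicate term is the advective one. Writing, for any smooth test function $\phi$,
\[
(u_n f_n - u f)\cdot \nabla \phi = (u_n - u) f_n \cdot \nabla \phi + u(f_n - f)\cdot \nabla \phi,
\]
the first summand vanishes in $L^1 L^1$ by strong convergence $u_n \to u$ together with the uniform bound on $f_n$, while the second vanishes distributionally because $u\cdot\nabla\phi \in L^1 L^1$ pairs weak-$*$-continuously with $f_n - f$. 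The Laplacian and time-derivative terms are trivially weak-$*$-continuous. The initial datum is inherited from $f_0^n \xrightharpoonup{*} f_0$, so $f$ is a weak solution to \eqref{eqTE}/\eqref{eqADE} along $u$ with initial datum $f_0$.

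Finally, Theorem \ref{weakcont} provides a representative $f \in C^0_{\mathrm{weak-}*} L^\infty$ for which the trace formula \eqref{eqtrace} holds at every $t \in [0,T]$. Applying \eqref{eqtrace} to each $f_n$ with a time-independent test function $\phi \in C^\infty(\mathbb{T}^d)$ and passing to the limit (by the same arguments as above) gives $\int f(\cdot,t)\phi\,dx = \lim_n \int f_n(\cdot,t)\phi\,dx$ for every $t \in [0,T]$. Taking the supremum over $\phi$ with $\|\phi\|_{L^q(\mathbb{T}^d)} \le 1$ (where $\tfrac{1}{p}+\tfrac{1}{q}=1$, using density of $C^\infty$ in $L^q$ when $q<\infty$) upgrades the approximate bounds to the pointwise-in-$t$ estimate \eqref{eqinitineq}, while $\phi \equiv 1$ yields \eqref{eqconsmass}. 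The argument is largely routine; the only delicate bookkeeping is precisely this upgrade from almost every $t$ to every $t$, for which the weak continuity of Theorem \ref{weakcont} is the key ingredient.
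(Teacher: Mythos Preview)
Your proposal is correct and follows essentially the same approach as the paper: regularise $u$ and $f_0$, obtain uniform $L^p$ bounds on the classical solutions, extract a weak-$*$ limit, and invoke Theorem~\ref{weakcont} for the $C^0_{\mathrm{weak-}*}L^\infty$ representative and the pointwise-in-$t$ conclusions. The paper is terser---it simply cites \cite{diperna1989ordinary} for the approximation scheme and deduces \eqref{eqinitineq} from the a.e.\ bound $\|f\|_{L^\infty L^p}\le\|f_0\|_{L^p}$ combined with weak-$*$ continuity and lower semicontinuity of the $L^p$ norm, rather than passing the limit through the trace formula at each fixed $t$---but the content is the same.
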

	\begin{proof}
		More generally, for $f_0 \in L^q(\mathbb{T}^d)$, $u \in L^1([0,T];L^r(\mathbb{T}^d;\mathbb{R}^d))$ with $\frac{1}{q}+\frac{1}{r}=1$, existence of a weak solution $f$ with $\left\|f\right\|_{L^\infty L^p} \le \left\|f_0\right\|_{L^p}$ (for all $p\in[1,\infty]$, permitting infinite values of the norms) follows from a standard approximation scheme (regularisation of $u$ and $f_0$), see \cite{diperna1989ordinary} for details; though the proof considers only $\nu = 0$ (i.e. \eqref{eqTE}), and the spatial domain $\mathbb{R}^d$ instead of $\mathbb{T}^d$, an identical argument goes through here.
		
		By assumption $u \in L^1L^1$, and so we may apply the above result with $q=\infty$.
		
		By Theorem \ref{weakcont} the solution is in $C_{\mathrm{weak-}*}^0 L^\infty$, and so the bound $\left\|f\right\|_{L^\infty L^p} \le \left\|f_0\right\|_{L^p}$ implies the Initial $L^p$-Inequality \eqref{eqinitineq}.
		
		Conservation of Mass \eqref{eqconsmass} follows from the Trace Formula \eqref{eqtrace} in Theorem \ref{weakcont} with $\phi \equiv 1$ on $\mathbb{T}^d \times [0,T]$.
	\end{proof}
	
	\begin{remark}
		In fact, Conservation of Mass \eqref{eqconsmass} will hold for a.e. $t \in [0,T]$ for every weak solution on the torus, and not only those constructed in Theorem \ref{TEexistence}. This does not hold in $\mathbb{R}^d$.
	\end{remark}
	
	\begin{remark}
		Though a significant topic of interest, we do not mention further regularity results for the transport equation, as they require further assumptions on $u$. Instead, we point to \cite{Ambrosio2014} for an exposition of the Cauchy-Lipschitz theory, and other more standard results, including the DiPerna-Lions theory of renormalised weak solutions (introduced in \cite{diperna1989ordinary}), and Ambrosio's extension of this well-posedness class to $u \in L^1([0,T];BV)$ (originally in \cite{ambrosio2004transport}).
	\end{remark}
	
	Next, we give the following well-posedness and regularity result for \eqref{eqADE}. To illustrate the stark contrast between \eqref{eqTE} and its regularisation \eqref{eqADE} we show well-posedness for any $f_0 \in L^1(\mathbb{T}^d)$. When, in addition, $f_0$ is more integrable, we may obtain further regularity. Therefore, when stating our main results, we shall later assume $f_0\in L^\infty$.
	
	\begin{theorem}[Well-posedness of \eqref{eqADE}]\label{ADEwellposed}
		Suppose $u\in L^\infty L^\infty$, then for any initial data $f_0 \in L^1(\mathbb{T}^d)$ any weak solution (in the class $L^1 L^1$) to \eqref{eqADE} along $u$ with initial data $f_0$ is unique. Moreover this solution exists, $f\in C^0L^1$ with $\left\|f\right\|_{C^0L^1} \le \left\|f_0\right\|_{L^1(\mathbb{T}^d)}$ and becomes immediately bounded, $f \in C^0([\epsilon,T];C^0(\mathbb{T}^d))$ for all $\epsilon\in(0,T]$.
		
		Furthermore, when $f_0 \in L^\infty$ we have the following additional regularity (for all $p\in[1,\infty)$)
		\begin{gather}
			f \in (C^0 L^p) \cap (L^2 H^1), \label{eqADEcont} \\
			f \in C_{\mathrm{weak-}*}^0L^\infty, \label{eqADEcont2} \\
			\text{($L^p$-Inequality)\quad} 0 \le s \le t \implies \left\|f(\cdot, t)\right\|_{L^p(\mathbb{T}^d)} \le \left\|f(\cdot, s)\right\|_{L^p(\mathbb{T}^d)}, \label{eqlpineq} \\
			\text{(Energy Identity)\quad} \int_{\mathbb{T}^d} \left|f(x, t)\right|^2 \;dx + 2\nu \int_{\mathbb{T}^d \times [0,t]} \left|\nabla f(x,s)\right|^2\;dxds = \int_{\mathbb{T}^d} \left|f_0(x)\right|^2\;dx, \label{eqenergy} \\
			\text{(Equicontinuity)\quad} \left\|\frac{\partial f}{\partial t}\right\|_{L^2H^{-1}} \le \left(\left\|u\right\|_{L^2 L^\infty} + \sqrt{\frac{\nu}{2}}\right)\left\|f_0\right\|_{L^2}. \label{eqequicont}
		\end{gather}
	\end{theorem}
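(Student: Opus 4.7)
My plan is to first establish all the regularity statements for $f_0 \in L^\infty$ by approximation with smooth data and smooth vector field, and then to obtain uniqueness (and thence existence and smoothing for $L^1$ data) via a duality argument with the backward adjoint problem.

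For the approximation step, I would mollify $u$ in space and time to divergence-free $u^{(n)} \in C^\infty$ with $\|u^{(n)}\|_{L^\infty L^\infty} \le \|u\|_{L^\infty L^\infty}$ and $u^{(n)} \to u$ in every $L^p L^p$ ($p<\infty$), and likewise mollify $f_0$ to smooth $f_0^{(n)}$. Classical parabolic theory yields smooth solutions $f^{(n)}$. Standard energy computations (multiplying the equation by $\eta'(f^{(n)})$ for smooth convex $\eta$, integrating by parts, and using $\nabla \cdot u^{(n)} = 0$ to eliminate the drift) give the $L^p$-Inequality \eqref{eqlpineq}; the case $\eta(s) = s^2$ retains the full dissipation and yields the Energy Identity \eqref{eqenergy}. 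The bound $\|u f^{(n)}\|_{L^2 L^2} \le \|u\|_{L^\infty L^\infty} \|f^{(n)}\|_{L^2 L^2}$ combined with the energy bound on $\nabla f^{(n)}$ produces \eqref{eqequicont}. Passing to the limit, these uniform bounds together with Aubin-Lions compactness promote the weak/weak-$*$ limits to a strong $L^2 L^2$ limit $f$, which is enough to preserve the Energy Identity as an identity (not merely an inequality); \eqref{eqlpineq} passes by weak-$*$ lower semi-continuity. Since $f \in L^2 H^1$ with $\partial_t f \in L^2 H^{-1}$, Lions' lemma gives $f \in C^0 L^2$; combined with the $L^\infty$ bound and dominated convergence this upgrades to $f \in C^0 L^p$ for all $p<\infty$, and $f \in C_{\text{weak-}*}^0 L^\infty$ follows from Theorem \ref{weakcont}.

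For uniqueness, I would fix $\chi \in C_c^\infty(\mathbb{T}^d \times (0,T))$, mollify $u$ to $u^{(n)}$, and classically solve the backward adjoint problem $-\partial_t \phi_n - u^{(n)} \cdot \nabla \phi_n - \nu \Delta \phi_n = \chi$, $\phi_n(\cdot, T) = 0$. The $L^p$-Inequality and Energy Identity applied to the time-reversed equation give $\phi_n$ uniformly bounded in $L^\infty L^\infty$ and $\nabla \phi_n$ uniformly bounded in $L^2 L^2$. Testing the difference $g = f_1 - f_2$ of two weak solutions against $\phi_n$ (a valid test function after a cutoff near $t = T$) yields
\[\int g\,\chi \,dx\,dt \;=\; \int g\,(u - u^{(n)}) \cdot \nabla \phi_n \,dx\,dt,\]
which, if $g \in L^\infty L^\infty$, is bounded by $\|g\|_{L^\infty L^\infty} \|u - u^{(n)}\|_{L^2 L^2} \|\nabla \phi_n\|_{L^2 L^2} \to 0$, giving uniqueness in the $L^\infty L^\infty$ class. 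To extend this to $L^1 L^1$ and simultaneously obtain $f \in C^0([\epsilon, T]; C^0)$, I would first show that any $L^1 L^1$ weak solution is instantaneously bounded for positive time, via the Duhamel representation $f(t) = e^{\nu t \Delta} f_0 - \int_0^t e^{\nu(t-s) \Delta} \nabla \cdot (u f)(s)\,ds$ and the $L^p \to L^q$ mapping properties of $e^{\nu t \Delta} \nabla$, bootstrapping from $L^1$ through $L^p$ spaces up to $L^\infty$ (which is possible thanks to $u \in L^\infty L^\infty$). Finally, existence for $f_0 \in L^1$ with $\|f\|_{C^0 L^1} \le \|f_0\|_{L^1}$ follows by approximating $f_0^{(n)} \to f_0$ in $L^1$ and using \eqref{eqlpineq} at $p = 1$ as an $L^1$-contraction on differences of the corresponding smooth solutions to extract a Cauchy limit in $C^0 L^1$.

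The main technical obstacle is this instantaneous parabolic smoothing from $L^1$: justifying the Duhamel formula directly for an a priori merely distributional weak solution (where $\nabla \cdot (u f)$ is only controlled in $W^{-1,1}$) and then iterating up to $L^\infty$. Once this is in place, the duality argument and the $L^\infty$-data theory close the loop and deliver all the remaining claims.
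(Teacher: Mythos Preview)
Your approach is correct and largely parallels the paper for the $L^\infty$-data regularity (approximation, energy identity, $L^p$-inequality, equicontinuity, Lions' lemma for $C^0L^2$), but it diverges for the $L^1L^1$ uniqueness. The paper also begins by deriving the Duhamel representation for an arbitrary $L^1L^1$ weak solution --- exactly the step you flag as the main obstacle, handled by testing the weak formulation against $\phi *_{x,t}(\bar K_\nu 1_{t<0})$ --- but then, instead of your duality/bootstrap route, it applies Young's inequality directly to Duhamel to obtain a contraction in $L^1L^1$ on short time intervals:
\[
\|f\,1_{[0,\epsilon]}\|_{L^1L^1}\le \|f\,1_{[0,\epsilon]}\|_{L^1L^1}\,\|u\|_{L^\infty L^\infty}\,\|\nabla K_\nu 1_{(0,\epsilon]}\|_{L^1L^1},
\]
which forces $f\equiv 0$ for $\epsilon$ small, then iterates. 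This is shorter and sidesteps the delicate point in your plan: bootstrapping from $L^1_tL^1_x$ via Duhamel yields at first only $L^1_tL^\infty_x$, and upgrading to $L^\infty([\epsilon,T];L^\infty)$ without already knowing uniqueness (or without weighted-in-time estimates) is not automatic. For the instantaneous smoothing $f\in C^0([\epsilon,T];C^0)$ itself, the paper instead cites Nash-type a priori $L^\infty$ bounds (Carlen--Loss) on the approximating sequence; your Duhamel bootstrap is a valid alternative for that part, but note it is unnecessary for uniqueness once the direct contraction is in hand.
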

	\begin{proof}
		The standard well-posedness result is for $f_0\in L^2$, and is done via energy estimates, see for example Chapter 7 in \cite{evans2010partial}. However, if we wish to obtain uniqueness in the class $L^1 L^1$, we must be more careful. Given by the spatial Fourier transform on $\mathbb{R}^d$, $\mathcal{F}(K_\nu(\cdot,t))(\xi) = e^{-\nu\xi^2 t}$, denote by $K_\nu \in C^\infty(\mathbb{R}^d\times(0,\infty))\cap L^\infty((0,\infty); L^1(\mathbb{R}^d))$ the heat kernel for the heat equation with diffusivity $\nu$, that is for any $\phi \in C_c^\infty(\mathbb{T}^d \times (-\infty, \infty))$ we have
		\begin{equation*}
			\left(\frac{\partial}{\partial t} - \nu\Delta\right)(\phi *_{x,t} (K_\nu 1_{t>0})) = \phi,
		\end{equation*}
		where $*_{x,t}$ denotes convolution is space and time. Denoting by $\bar{K}_\nu \in C^\infty(\mathbb{R}^d\times(-\infty,0))$ the backwards heat kernel $\bar{K}_\nu(x,t) = K_\nu(-x,-t)$, then we have for the backwards heat equation, for any $\phi \in C_c^\infty(\mathbb{T}^d \times (-\infty, \infty))$
		\begin{equation*}
			\left(\frac{\partial}{\partial t} + \nu\Delta\right)(\phi *_{x,t} (\bar{K}_\nu 1_{t<0})) = -\phi.
		\end{equation*}
		
		For $\phi \in C_c^\infty(\mathbb{T}^d\times[0,T))$ we may take $\phi *_{x,t} (\bar{K}_\nu 1_{t<0})$ as a test function in \eqref{eqADE}, which (by expanding out all convolutions) can be rewritten as
		\begin{align*}
			\int_{\mathbb{T}^d\times[0,T]} f\phi \; dx dt & = \int_{\mathbb{T}^d} f_0(x) \left(\phi *_{x,t} (\bar{K}_\nu 1_{t<0})\right)(x,0) \; dx + \int_{\mathbb{T}^d\times[0,T]} fu\cdot \left(\phi *_{x,t} (\nabla\bar{K}_\nu 1_{t<0})\right) \; dx dt \\
			& = \int_{\mathbb{T}^d} \phi(x,t) \left(f_0 *_x (K_\nu(\cdot, t)) - (fu 1_{t \in [0,T]}) *_{x,t} (\nabla K_\nu 1_{t>0})\right)(x,t) \; dx dt,
		\end{align*}
		where $*_x$ denotes convolution over the spatial variable $x \in \mathbb{T}^d$ only.
		
		We have shown, indeed for any weak solution $f$ to \eqref{eqADE} along $u$ with initial data $f_0 \in L^1(\mathbb{T}^d)$,
		\begin{equation}
			f = f_0 *_x K_\nu(\cdot, t) - (fu1_{t\in[0,T]}) *_{x,t} (\nabla K_\nu 1_{t>0}). \label{eqduhamel}
		\end{equation}
		
		When $f_0 = 0$, we have by Young's convolution inequality,
		\begin{equation*}
			\left\|f1_{t\in[0,\epsilon]}\right\|_{L^1 L^1} \le \left\|f1_{t\in[0,\epsilon]}\right\|_{L^1 L^1} \left\|u\right\|_{L^\infty L^\infty} \left\| \nabla K_\nu 1_{t\in(0,\epsilon]}\right\|_{L^1 L^1}.
		\end{equation*}
		
		It is straightforward to check that $\nabla K_\nu 1_{t \in (0,T]} \in L^1 L^1$, and so for $\epsilon$ small enough (depending only on $\left\|u\right\|_{L^\infty L^\infty}$ and $\nu$) the above implies that $f1_{t\in[0,\epsilon]} = 0$. Repeating the argument then shows that for all $n \in \mathbb{N}$, $f1_{t\in[0,n\epsilon]}=0$, and so indeed $f=0$, proving uniqueness.
		
		Existence of a weak solution $f \in L^\infty L^1$ with $\left\|f\right\|_{L^\infty L^1} \le \left\|f_0\right\|_{L^1}$ follows by solving the equation for mollified $u$ and $f_0$ as in Theorem \ref{TEexistence}. It can be checked that this produces a sequence of smooth functions $f_n$ converging to the solution $f$ weakly, and moreover, by the argument in \cite{Carlen1995}, with an a priori bound on $\left\|f_n\right\|_{L^\infty([\epsilon,T];L^\infty(\mathbb{T}^d))}$ depending only on $\left\|f_0\right\|_{L^1(\mathbb{T}^d)}$, $\left\|u\right\|_{L^1L^\infty}$, $\epsilon > 0$, and $\nu$. We therefore have $f \in L^\infty L^1$ and $f\in L^\infty([\epsilon,T];L^\infty(\mathbb{T}^d))$ for any $\epsilon>0$. Though we do not make use of it in the remainder of the paper, the improved regularity $f \in C^0 L^1$, and $f \in C^0([\epsilon,T];C^0(\mathbb{T}^d))$ for all $\epsilon\in(0,T]$, follow from the formula \eqref{eqduhamel} and the regularity of $K_\nu$.
		
		When in addition $f_0 \in L^2(\mathbb{T}^d)$ the regularised sequence $f_n$ can further be shown to converge in $(C^0L^2) \cap (L^2H^1)$. Statements \eqref{eqlpineq} and \eqref{eqenergy} then follow from their counterparts for smooth $f_0$ and $u$. If $f_0 \in L^\infty(\mathbb{T}^d)$ we have, say by \eqref{eqlpineq}, $f \in L^\infty L^\infty$, and hence by Theorem \ref{weakcont} we prove \eqref{eqADEcont2}. The continuity $f \in C^0L^p$ then follows from $f\in C^0L^2$, for $p\in [1,2)$ by compactness of $\mathbb{T}^d$, and for $p\in(2,\infty)$ by interpolation with $f\in L^\infty L^\infty$. It remains to show \eqref{eqequicont}. From \eqref{eqenergy} we have the bounds $\left\|f\right\|_{L^\infty L^2}, \sqrt{2\nu}\left\|\nabla f\right\|_{L^2L^2} \le \left\|f_0\right\|_{L^2}$, and hence for any $\phi \in C_c^\infty(\mathbb{T}^d\times(0,T))$, by the equation \eqref{eqADE},
		\begin{align*}
			\left|\int_{\mathbb{T}^d\times(0,T)}f\frac{\partial \phi}{\partial t} \; dxdt\right| & = \left|\int_{\mathbb{T}^d\times(0,T)}f \left(u\cdot\nabla\phi + \nu \Delta \phi\right) \; dxdt\right| \\
			& \le \left\|f\right\|_{L^\infty L^2}\left\|u\right\|_{L^2L^\infty}\left\|\nabla\phi\right\|_{L^2L^2} + \nu\left\|\nabla f\right\|_{L^2L^2}\left\|\nabla\phi\right\|_{L^2L^2} \\
			& \le \left\|f_0\right\|_{L^2}\left\|u\right\|_{L^2L^\infty}\left\|\nabla\phi\right\|_{L^2L^2} + \sqrt{\frac{\nu}{2}}\left\|f_0\right\|_{L^2}\left\|\nabla\phi\right\|_{L^2L^2},
		\end{align*}
		which proves \eqref{eqequicont}.
	\end{proof}
	
	\begin{remark}
		It is likely possible to extend the first part of Theorem \ref{ADEwellposed} to the Prodi-Serrin class \cite{Prodi1959, Serrin1962}, $u \in L^pL^q$, $\frac{2}{p} + \frac{d}{q} \le 1$ ($d<q$), though we do not attempt to prove such a result here as it is immaterial to this paper.
		
		In fact, such a result is already available in the stochastic setting due to Flandoli, see \cite{fedrizzi2013noise}.
	\end{remark}
	
	\begin{remark}
		When $f_0 \in L^1$ explicit decay rates of the $L^\infty(\mathbb{T}^d)$ norm (and other $L^p$ norms) are provable. See \cite{Carlen1995, Maekawa2008} for details when working on the whole space $\mathbb{R}^d$.	For general existence and regularity results concerning similar parabolic PDEs, we point to \cite{LeBris2019} and also Chapter 7 in \cite{evans2010partial}.
	\end{remark}
	
	\section{Vanishing viscosity}\label{vvLimit}
	The primary purpose of this section is to prove Theorem \ref{vcontrol} below, which allows us to construct bounded divergence-free vector fields in a way that permits control of the corresponding vanishing viscosity limit of \eqref{eqADE}. This relies on two Propositions.
	
	The first, Proposition \ref{ADEtoTE} below, gives a general criterion for which the vanishing viscosity limit of \eqref{eqADE} converges strongly to \eqref{eqTE}. That is, for a suitable divergence-free vector field $u:\mathbb{T}^d\times[0,T]\to\mathbb{R}^d$, and small viscosity $\nu>0$, that solutions of \eqref{eqADE} along $u$ are well-approximated by a weak solution of \eqref{eqTE} along $u$. This result is a generalisation of the Selection Theorem in \cite{bonicatto2021advection}.
	
	The second, Proposition \ref{ADEtoADE} below, uses a similar argument to show, for fixed viscosity $\nu>0$, how solutions of \eqref{eqADE} depend little on the small spatial scales of the vector field $u$. We quantify these scales through the weak-$*$ topology of vector fields in $L^\infty([0,T];L^\infty(\mathbb{T}^d;\mathbb{R}^d))$. The intuition is that the viscosity `blurs' these small spatial scales.
	
	The key idea of Theorem \ref{vcontrol} is then that solving \eqref{eqADE} with reduced viscosity $\nu>0$ is akin to adding small spatial scales while solving \eqref{eqTE}.
	
	The advantage of solving \eqref{eqTE} is then that Lagrangian solutions (Definition \ref{lagrangian}) can be designed rather explicitly.
	
	\begin{proposition}\label{ADEtoTE}
		Consider a vector field $u \in L^1\left( [0,T]; L^1(\mathbb{T}^d;\mathbb{R}^d)\right)$ with $\nabla\cdot u=0$ in the distributional sense. Fix some initial data $f_0 \in L^\infty(\mathbb{T}^d)$.
		
		Suppose that there is a unique weak solution $f$ (in the class $L^\infty L^\infty$) to \eqref{eqTE} along $u$ with initial data $f_0$, and that additionally $f$ is a renormalised weak solution (Definition \ref{renormalised}).
		
		For each $\nu > 0$, denote by $f^\nu$ any weak solution to \eqref{eqADE} along $u$ with initial data $f_0$. Suppose in addition that $f^\nu \in C_{\mathrm{weak-}*}^0L^\infty$ and satisfies the Initial $L^p$-Inequality \eqref{eqinitineq} for all $p \in [1,\infty]$. (This would be the case if $u \in L^\infty L^\infty$, see Theorem \ref{ADEwellposed}.)
		
		Then, for each $p \in [1,\infty)$, $f^\nu\xrightarrow{\nu \to 0}f$ converges strongly in $L^p L^p$ and also in weak-$*$ $L^\infty L^\infty$.
		
		If additionally $f \in C^0 L^1$, then for each $p \in [1,\infty)$, $f^\nu\xrightarrow{\nu \to 0}f$ also converges strongly in $L^\infty L^p$.
	\end{proposition}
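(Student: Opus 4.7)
The plan is to first extract weak-$*$ subsequential limits of $\{f^\nu\}$, identify each such limit with $f$ via uniqueness, and then upgrade weak-$*$ to strong convergence by exploiting $L^2$-norm conservation of $f$ coming from renormalisation. The Initial $L^p$-Inequality \eqref{eqinitineq} with $p=\infty$ provides the uniform bound $\|f^\nu\|_{L^\infty L^\infty}\le\|f_0\|_{L^\infty}$, so along any sequence $\nu_n\to 0$ Banach-Alaoglu supplies a weak-$*$ subsequential limit $g\in L^\infty L^\infty$. Passing to the limit in the weak formulation of \eqref{eqADE} against a fixed $\phi\in C_c^\infty(\mathbb{T}^d\times[0,T))$, the diffusion term $\nu_n\int f^{\nu_n}\Delta\phi\,dxdt\to 0$ by uniform boundedness, while $\int f^{\nu_n}(u\cdot\nabla\phi)\,dxdt\to\int g(u\cdot\nabla\phi)\,dxdt$ by duality, since $u\cdot\nabla\phi\in L^1L^1$ (because $u\in L^1L^1$ and $\nabla\phi$ is bounded) pairs with the weak-$*$ convergence of $f^{\nu_n}$. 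Hence $g$ solves \eqref{eqTE} along $u$ with initial data $f_0$; by the uniqueness hypothesis $g=f$, and since every subsequence shares this limit, $f^\nu\to f$ in weak-$*$ $L^\infty L^\infty$.

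To promote this to strong $L^pL^p$-convergence, I exploit renormalisation. Since $|f|\le\|f_0\|_{L^\infty}$, pick $\beta\in C_b^0(\mathbb{R})$ agreeing with $s\mapsto s^2$ on $[-\|f_0\|_{L^\infty},\|f_0\|_{L^\infty}]$; by Definition \ref{renormalised}, $f^2=\beta(f)$ is a weak solution of \eqref{eqTE} with initial data $f_0^2$, and Conservation of Mass (Theorem \ref{TEexistence} and the remark following it) gives $\|f(\cdot,t)\|_{L^2}^2=\|f_0\|_{L^2}^2$ for a.e. $t$. Combined with $\|f^\nu(\cdot,t)\|_{L^2}^2\le\|f_0\|_{L^2}^2$ and integrated in $t$,
\[
\limsup_{\nu\to 0}\|f^\nu\|_{L^2L^2}^2\le T\|f_0\|_{L^2}^2=\|f\|_{L^2L^2}^2,
\]
while weak lower-semicontinuity of the $L^2L^2$-norm gives the matching lower bound on the $\liminf$. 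Norm convergence combined with weak convergence in the Hilbert space $L^2L^2$ yields strong $L^2L^2$-convergence, which interpolates via the uniform bound $\|f^\nu-f\|_{L^\infty L^\infty}\le 2\|f_0\|_{L^\infty}$ and the finite measure of $\mathbb{T}^d\times[0,T]$ to strong $L^pL^p$-convergence for every $p\in[1,\infty)$.

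For the strengthened conclusion under the extra hypothesis $f\in C^0L^1$, I argue by contradiction. Interpolating $f\in C^0L^1$ with $f\in L^\infty L^\infty$ first yields $f\in C^0L^p$ for each $p\in[1,\infty)$. Were $L^\infty L^p$-convergence to fail, there would exist $\epsilon>0$, $\nu_k\to 0$ and $t_k\to t^\star\in[0,T]$ with $\|f^{\nu_k}(\cdot,t_k)-f(\cdot,t^\star)\|_{L^p}\ge\epsilon/2$ for large $k$. Applying the Trace Formula \eqref{eqtrace} at time $t_k$ against smooth spatial $\phi$, the $\nu_k\Delta\phi$-term again vanishes; decomposing $\int_0^{t_k}=\int_0^{t^\star}+\int_{t^\star}^{t_k}$, the first piece converges by the weak-$*$ $L^\infty L^\infty$-convergence already established, and the second is bounded by $\|\nabla\phi\|_{L^\infty}$ times the integral of $\|u(\cdot,s)\|_{L^1}$ over the short interval between $t_k$ and $t^\star$, which vanishes by absolute continuity since $u\in L^1L^1$. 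Thus $f^{\nu_k}(\cdot,t_k)\to f(\cdot,t^\star)$ weakly-$*$ in $L^\infty(\mathbb{T}^d)$, and rerunning the renormalisation comparison at the fixed time $t^\star$---where $\|f(\cdot,t^\star)\|_{L^2}=\|f_0\|_{L^2}$---forces strong $L^2$-convergence and hence $L^p$-convergence, contradicting the separation. The main subtlety throughout is the renormalisation step, since Definition \ref{renormalised} requires a bounded $\beta$: one must first truncate $s^2$ to a bounded $\beta$ agreeing with it on the essential range of $f$; once $L^2$-norm conservation for $f$ is secured, the remainder is routine functional-analytic manipulation.
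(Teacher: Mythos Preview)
Your proof is correct. The first two parts---weak-$*$ convergence via uniqueness of subsequential limits, and upgrade to strong $L^pL^p$ via renormalisation and norm matching---are essentially the same as the paper's argument, with only cosmetic differences (you truncate $s^2$ directly, while the paper uses $\beta(x)=M\wedge|x|^p$ and monotone convergence to cover all $p$ simultaneously).

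Your argument for the $L^\infty L^p$ upgrade differs from the paper's and is in some respects cleaner. The paper proceeds by first showing $uf^\nu\to uf$ strongly in $L^1L^1$ (via a subsequence--dominated-convergence argument) and then performing a direct uniform-in-$t$ estimate of $\|f^\nu(\cdot,t)-f(\cdot,t)\|_{L^2}^2$ using a smooth space-time approximation $\phi_\epsilon$ of $f$ and the Trace Formula. You instead run a sequential contradiction: extract $t_k\to t^\star$, apply the Trace Formula with a purely spatial test function, and split $\int_0^{t_k}=\int_0^{t^\star}+\int_{t^\star}^{t_k}$, handling the first by the already-proved weak-$*$ convergence and the second by absolute continuity of $\int\|u(\cdot,s)\|_{L^1}\,ds$. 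This avoids the $uf^\nu\to uf$ step entirely. One point worth making explicit: the norm identity $\|f(\cdot,t^\star)\|_{L^2}=\|f_0\|_{L^2}$ that you invoke at the specific (not merely almost-every) time $t^\star$ follows because $f\in C^0L^2$ extends the a.e.\ identity from part~2 to every $t$; you have $f\in C^0L^2$ from interpolating $C^0L^1$ with $L^\infty L^\infty$, so this is fine, but it is the one place where the hypothesis $f\in C^0L^1$ is genuinely used in your version.
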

	\begin{proof}
		Suppose that $f^\nu$ does not converge in weak-$*$ $L^\infty L^\infty$ to $f$ as $\nu\to0$. Then there exists some $g \in L^1 L^1$, and a sequence $\nu_i \xrightarrow{i\to\infty}0$ and $c>0$ such that for all $i\in\mathbb{N}$
		\begin{equation}
			\left|\int_{\mathbb{T}^d\times[0,T]}(f^{\nu_i}-f)g\;dxdt\right|\ge c. \label{eqcontr1}
		\end{equation}
		
		By the Initial $L^p$-Inequality \eqref{eqinitineq} $f^{\nu_i}$ is uniformly bounded for all $i\in\mathbb{N}$ in $L^\infty L^\infty$, and so by taking a subsequence if necessary, we may assume $f^{\nu_i}\xrightharpoonup{i\to\infty}\bar{f}$ converges in weak-$*$ $L^\infty L^\infty$ to some $\bar{f} \in L^\infty L^\infty$. Then, for any $\phi \in C_c^\infty(\mathbb{T}^d \times [0,T))$,
		\begin{align*}
			& \int_{\mathbb{T}^d\times[0,T)} \bar{f} \left(\frac{\partial \phi}{\partial t} + u\cdot\nabla \phi\right) \; dxdt \\
			& = \lim_{i\to\infty} \int_{\mathbb{T}^d\times[0,T)} f^{\nu_i} \left(\frac{\partial \phi}{\partial t} + u\cdot\nabla \phi + \nu_i \Delta \phi\right) \; dxdt \\
			& = - \int_{\mathbb{T}^d} f_0 \phi_0 \; dx,
		\end{align*}
		and so the limit $\bar{f}$ is a weak solution to \eqref{eqTE} along $u$ with initial data $f_0$. Moreover, it is in $L^\infty L^\infty$, so by assumption must be the unique weak solution $f$, contradicting \eqref{eqcontr1}. Therefore $f^\nu \xrightarrow{\nu\to0}f$ converges in weak-$*$ $L^\infty L^\infty$.
		
		Recall by Theorem \ref{weakcont} that $f, f^\nu \in C_{\mathrm{weak-}*}^0L^\infty$.
		
		Since by assumption $f^\nu$ satisfies the Initial $L^p$-Inequality \eqref{eqinitineq} for all $p \in [1,\infty]$, we may bound
		\begin{equation}\label{eq10.5}
			\left\|f^\nu\right\|_{L^p L^p} \le T^{\frac{1}{p}}\left\|f_0\right\|_{L^p}.
		\end{equation}
		
		If $p\in(1,\infty]$, weak-$*$ convergence in $L^\infty L^\infty$ implies weak-$*$ convergence in $L^p L^p$. Whenever $p \in (1,\infty)$, weak-$*$ convergence in $L^pL^p$ is also strong in $L^pL^p$ if and only if $\underset{\nu\to0}{\limsup}\left\|f^\nu\right\|_{L^pL^p} \le \left\|f\right\|_{L^pL^p}$. This is a standard result following from uniform convexity of $L^p(\mathbb{T}^d)$ for $p \in (1,\infty)$.
		
		To show that this is satisfied, we use that $f \in C_{\mathrm{weak-}*}^0L^\infty$ is a renormalised weak solution to \eqref{eqTE} (Definition \ref{renormalised}). Denoting by $a\wedge b = \min\left\{a,b\right\}$, let $M \in \mathbb{N}$ and $\beta(x) = M\wedge|x|^p$, then $\beta(f)$ is a weak solution to \eqref{eqTE} along $u$ with initial data $\beta(f_0) \in L^\infty$. Taking $\phi \equiv 1$ in the Trace Formula \eqref{eqtrace} shows that there exists a subset $E_M\subset[0,T]$ with zero Lebesgue-measure, such that for all $t \in [0,T]\setminus E_M$ we have,
		\begin{equation*}
			\int_{\mathbb{T}^d} M \wedge |f(\cdot, t)|^p \; dx = \int_{\mathbb{T}^d} M \wedge |f_0|^p \; dx.
		\end{equation*}
		
		In particular, the above holds for all $t \in [0,T]\setminus \bigcup_{M\in\mathbb{N}}E_M$. By the Lebesgue monotone convergence Theorem, taking $M\to\infty$ shows that $\left\|f(\cdot, t)\right\|_{L^p(\mathbb{T}^d)} = \left\|f_0\right\|_{L^p}$ for all $t \in [0,T]\setminus \bigcup_{M\in\mathbb{N}}E_M$, which implies $\left\|f\right\|_{L^p L^p} = T^{\frac{1}{p}}\left\|f_0\right\|_{L^p}$. Combined with \eqref{eq10.5} this implies that $\underset{\nu\to0}{\limsup}\left\|f^\nu\right\|_{L^pL^p} \le \left\|f\right\|_{L^pL^p}$, and hence $f^\nu \xrightarrow{\nu\to0} f$ in $L^pL^p$ as required.
		
		Convergence of $f^\nu \xrightarrow{\nu\to0} f$ in $L^1 L^1$ follows from convergence in $L^p L^p$ for any $p \in (1,\infty)$ and the compactness of the domain $\mathbb{T}^d \times [0,T]$. \\
		\linebreak
		We now assume that $f \in C^0 L^1$ and wish to upgrade to convergence in $L^\infty L^p$ for each $p \in [1,\infty)$. The idea is to use the Trace Formula \eqref{eqtrace} to show for each $t \in [0,T]$ that $f^\nu(\cdot, t) \xrightharpoonup{\nu\to0} f(\cdot, t)$ converges in weak-$*$ $L^\infty$, and then upgrade to (uniform in time) strong convergence in $L^p(\mathbb{T}^d)$ by convergence of the norm $\left\|f^\nu(\cdot,t)\right\|_{L^p(\mathbb{T}^d)}$ for $p \in (1,\infty)$.
		
		Since the $L^2(\mathbb{T}^d)$-inner product makes life easier, we will only prove convergence in $L^\infty L^2$, and notice that convergence in $L^\infty L^p$ follows for $p \in [1,2)$ by compactness of $\mathbb{T}^d$, and for $p\in(2,\infty)$ by interpolation with the existing uniform bound in $L^\infty L^\infty$. Though we do not elaborate on it, convergence in $L^\infty L^p$ for $p \in (1,\infty)$ can also be shown directly, if say $f_0 \notin L^2(\mathbb{T}^d)$.
		
		We have already shown that $f^\nu \xrightarrow{\nu\to0} f$ in $L^1L^1$, with uniform bound in $L^\infty L^\infty$. Suppose that $uf^\nu$ does not converge strongly in $L^1 L^1$ to $uf$ as $\nu\to0$. Then there exists a sequence $\nu_i \xrightarrow{i\to\infty}0$ and $c>0$ such that for all $i\in\mathbb{N}$
		\begin{equation}
			\left\|uf^{\nu_i} - uf\right\|_{L^1 L^1} \ge c \label{eqcontr2}.
		\end{equation}
		
		Now $f^{\nu_i} \xrightarrow{i\to\infty} f$ strongly in $L^1 L^1$, and so by taking a further subsequence if necessary, we may assume that $f^{\nu_i} \xrightarrow{i\to\infty} f$ point-wise a.e. in $\mathbb{T}^d\times[0,T]$. But then $uf^{\nu_i} \xrightarrow{i\to\infty} uf$ converge point-wise a.e. in $\mathbb{T}^d\times[0,T]$, and are also uniformly bounded for all $i\in\mathbb{N}$ in $L^1 L^1$ by $\left\|u\right\|_{L^1 L^1}\left\|f_0\right\|_{L^\infty(\mathbb{T}^d)}$, and so the dominated convergence Theorem yields a contradiction to \eqref{eqcontr2}. Therefore the product $uf^\nu \xrightarrow{\nu\to0} uf$ converges strongly in $L^1 L^1$.
		
		We now use that $f \in C^0 L^1$, and therefore $f\in C^0 L^2$ (by interpolation with the existing bound $f \in L^\infty L^\infty$), to take a smooth approximation $\phi_\epsilon \in C^\infty(\mathbb{T}^d\times[0,T])$, such that $\left\|\phi_\epsilon-f\right\|_{L^\infty L^2} \le \epsilon$. Then by the Trace Formula \eqref{eqtrace}, and that $\left\|f^\nu(\cdot, t)\right\|_{L^2(\mathbb{T}^d)} \le \left\|f_0\right\|_{L^2} = \left\|f(\cdot, t)\right\|_{L^2(\mathbb{T}^d)}$ for a.e. $t\in[0,T]$, we may write for a.e. $t \in [0,T]$,
		\begingroup
		\allowdisplaybreaks
		\begin{align*}
			& \left\|f^\nu(\cdot, t) - f(\cdot, t)\right\|_{L^2(\mathbb{T}^d)}^2 \\
			& \qquad = \left\|f^\nu(\cdot, t)\right\|_{L^2(\mathbb{T}^d)}^2 + \left\|f(\cdot, t)\right\|_{L^2(\mathbb{T}^d)}^2 - 2 \int_{\mathbb{T}^d} f^\nu(\cdot, t) f(\cdot, t)\;dx \nonumber \\
			& \qquad \le 2\left\|f(\cdot, t)\right\|_{L^2(\mathbb{T}^d)}^2 + 2\epsilon\left\|f_0\right\|_{L^2(\mathbb{T}^d)} - 2 \int_{\mathbb{T}^d} f^\nu(\cdot, t) \phi_\epsilon(\cdot, t)\;dx \nonumber \\
			& \qquad \le
			\begin{aligned}[t]
				& 2\left\|f(\cdot, t)\right\|_{L^2(\mathbb{T}^d)}^2 + 2\epsilon\left\|f_0\right\|_{L^2(\mathbb{T}^d)} - 2\int_{\mathbb{T}^d}f_0\phi_\epsilon(\cdot, 0)\;dx \\
				& - 2\int_{\mathbb{T}^d \times [0,t]}f^\nu \left(\frac{\partial \phi_\epsilon}{\partial t} + u \cdot \nabla \phi_\epsilon + \nu \Delta \phi_\epsilon \right) \;dxdt
			\end{aligned} \nonumber \\
			& \qquad \le
			\begin{aligned}[t]
				& 2\left\|f(\cdot, t)\right\|_{L^2(\mathbb{T}^d)}^2 + 2\epsilon\left\|f_0\right\|_{L^2(\mathbb{T}^d)} - 2\int_{\mathbb{T}^d}f_0\phi_\epsilon(\cdot, 0)\;dx \\
				& - 2\int_{\mathbb{T}^d \times [0,t]}f \left(\frac{\partial \phi_\epsilon}{\partial t} + u \cdot \nabla \phi_\epsilon\right) \;dxdt \\
				& + 2\left\|f^\nu - f\right\|_{L^1L^1}\left\|\frac{\partial \phi_\epsilon}{\partial t}\right\|_{L^\infty L^\infty} + 2\nu\left\|f^\nu\right\|_{L^1L^1}\left\|\Delta \phi_\epsilon\right\|_{L^\infty L^\infty} \\
				& + 2\left\|uf^\nu - uf\right\|_{L^1L^1}\left\|\nabla \phi_\epsilon\right\|_{L^\infty L^\infty}
			\end{aligned} \nonumber \\
			& \qquad = 
			\begin{aligned}[t]
				& 2\left\|f(\cdot, t)\right\|_{L^2(\mathbb{T}^d)}^2 + 2\epsilon\left\|f_0\right\|_{L^2(\mathbb{T}^d)} - 2\int_{\mathbb{T}^d}f(\cdot, t)\phi_\epsilon(\cdot, t)\;dx \\
				& + 2\left\|f^\nu - f\right\|_{L^1L^1}\left\|\frac{\partial \phi_\epsilon}{\partial t}\right\|_{L^\infty L^\infty} + 2\nu\left\|f^\nu\right\|_{L^1L^1}\left\|\Delta \phi_\epsilon\right\|_{L^\infty L^\infty} \\
				& + 2\left\|uf^\nu - uf\right\|_{L^1L^1}\left\|\nabla \phi_\epsilon\right\|_{L^\infty L^\infty}
			\end{aligned} \nonumber \\
			& \qquad \le 4\epsilon\left\|f_0\right\|_{L^2(\mathbb{T}^d)} + C_\epsilon\left(\left\|f^\nu - f\right\|_{L^1L^1} + \nu\left\|f^\nu\right\|_{L^1L^1} + \left\|uf^\nu - uf\right\|_{L^1L^1}\right),
		\end{align*}
		\endgroup
		with $\epsilon>0$ arbitrary, and $C_\epsilon$ a constant that depends only on $\epsilon$ and $f$, and not on $t \in [0,T]$. Then since $f^\nu \xrightarrow{\nu \to 0} f$, and $uf^\nu \xrightarrow{\nu \to 0} uf$ in $L^1 L^1$ we see that $f^\nu \xrightarrow{\nu \to 0} f$ in $L^\infty L^2$ as required.
	\end{proof}
	
	\begin{proposition}\label{ADEtoADE}
		Consider a sequence of uniformly bounded, divergence-free vector fields $u_n \in L^\infty \left( [0,T];L^\infty(\mathbb{T}^d;\mathbb{R}^d)\right)$, such that $u_n \xrightharpoonup{n\to\infty} u$ converges in weak-$*$ $L^\infty L^\infty$ to some $u \in L^\infty L^\infty$. Fix some initial data $f_0 \in L^\infty(\mathbb{T}^d)$.
		
		For each $n \in\mathbb{N}$, and $\nu > 0$ denote by $f^{n,\nu}$ the unique (by Theorem \ref{ADEwellposed}) weak solution to \eqref{eqADE} along $u_n$ with initial data $f_0$. Similarly, denote by $f^\nu$ the unique weak solution to \eqref{eqADE} along $u$ with initial data $f_0$. Then for each $0<a\le b$, $p \in [1,\infty)$
		\begin{equation*}
			\sup_{\nu\in[a,b]} \left\|f^{n,\nu}-f^\nu\right\|_{(L^2H^1) \cap (L^\infty L^p)} \xrightarrow{n \to \infty} 0.
		\end{equation*}
	\end{proposition}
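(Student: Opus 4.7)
The strategy is compactness plus uniqueness: viscous smoothing at $\nu \ge a > 0$ makes $\{f^{n,\nu}\}$ strongly precompact via Aubin--Lions, and uniqueness of weak solutions to \eqref{eqADE} (Theorem \ref{ADEwellposed}) identifies the limit. A direct energy estimate on $g^{n,\nu} = f^{n,\nu} - f^\nu$ would require strong $L^2 L^2$ convergence of $(u_n - u) f^\nu$, which does not follow from weak-$*$ $L^\infty L^\infty$ convergence of $u_n$ alone. From Theorem \ref{ADEwellposed} and the uniform $L^\infty L^\infty$ bound on $\{u_n\}$ I first extract, for $\nu \in [a,b]$, the $(n,\nu)$-uniform bounds $\|f^{n,\nu}\|_{L^\infty L^\infty} \le \|f_0\|_{L^\infty}$, $\sqrt{2a}\,\|\nabla f^{n,\nu}\|_{L^2 L^2} \le \|f_0\|_{L^2}$, and $\|\partial_t f^{n,\nu}\|_{L^2 H^{-1}} \le C$. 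By Aubin--Lions the joint family $\{f^{n,\nu}\}_{n,\,\nu \in [a,b]}$ is strongly precompact in $L^2 L^2$ and in $C^0 H^{-s}$ for each $s > 0$, weakly precompact in $L^2 H^1$, and (by interpolation with $L^\infty L^\infty$) strongly precompact in $L^p L^p$ and $C^0 L^p$ for $p \in [1,\infty)$; the same holds for $\{f^\nu\}_{\nu \in [a,b]}$.

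I would then argue by contradiction. Suppose the claim fails; extract $n_k \to \infty$, $\nu_k \to \nu^* \in [a,b]$ with $\|f^{n_k,\nu_k} - f^{\nu_k}\|_{(L^2 H^1)\cap(L^\infty L^p)} \ge \varepsilon$. Passing to further subsequences by the compactness above, $f^{n_k,\nu_k} \to h$ and $f^{\nu_k} \to h'$ strongly in $L^2 L^2$ and weakly in $L^2 H^1$. The key identification is that $h$ is a weak solution to \eqref{eqADE} with viscosity $\nu^*$ along $u$ with initial datum $f_0$. The delicate term in the weak formulation is the flux against a smooth test function $\phi$; writing
\[
\int u_{n_k} f^{n_k,\nu_k} \cdot \nabla\phi\,dxdt - \int u\, h\cdot\nabla\phi\,dxdt = \int u_{n_k}(f^{n_k,\nu_k} - h)\cdot\nabla\phi\,dxdt + \int (u_{n_k} - u)\,h\cdot\nabla\phi\,dxdt,
\]
the first piece is controlled by $\|u_{n_k}\|_{L^\infty L^\infty}\|f^{n_k,\nu_k}-h\|_{L^2 L^2}\|\nabla\phi\|_{L^2 L^2} \to 0$, and the second vanishes since $h\nabla\phi \in L^1 L^1$ is fixed and $u_{n_k} \rightharpoonup u$ weak-$*$ in $L^\infty L^\infty$. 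The Laplacian passes as $\nu_k \to \nu^*$. The analogous (simpler) argument identifies $h'$, so by uniqueness in Theorem \ref{ADEwellposed} we conclude $h = h' = f^{\nu^*}$.

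It remains to upgrade the $L^2 L^2$ convergence to $L^2 H^1$ and $L^\infty L^p$ to reach the contradiction. The Energy Identity \eqref{eqenergy} at time $T$ gives $2\nu_k \|\nabla f^{n_k,\nu_k}\|_{L^2 L^2}^2 = \|f_0\|_{L^2}^2 - \|f^{n_k,\nu_k}(\cdot,T)\|_{L^2}^2$ and an analogous identity for $h = f^{\nu^*}$. Aubin--Lions precompactness in $C^0 H^{-s}$ together with the uniform $L^\infty L^2$ bound yields $f^{n_k,\nu_k}(\cdot,T) \rightharpoonup h(\cdot,T)$ weakly in $L^2$; weak lower semicontinuity of the trace norm combined with $\nu_k \to \nu^*$ then gives $\limsup \|\nabla f^{n_k,\nu_k}\|_{L^2 L^2} \le \|\nabla h\|_{L^2 L^2}$, matched by weak lower semicontinuity of the $L^2 H^1$ norm. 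Norm convergence plus weak convergence yields strong $L^2 H^1$ convergence, and rerunning the Energy Identity gives strong $L^2$ convergence of the traces at $T$; combined with Aubin--Lions this upgrades to strong $C^0 L^2$, and by interpolation with $L^\infty L^\infty$ to strong $L^\infty L^p$ for $p \in [1,\infty)$, contradicting the lower bound $\varepsilon$.

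The main obstacle is the identification of $h$ under only weak-$*$ convergence of $u_{n_k}$: both factors of the product $u_{n_k} f^{n_k,\nu_k}$ depend on $k$, and only the strong $L^2 L^2$ compactness delivered by viscous smoothing (Aubin--Lions, requiring $\nu_k \ge a > 0$) allows this product to pass to the limit. This is the formal reason the argument breaks down as $a \downarrow 0$.
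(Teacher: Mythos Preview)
Your approach is essentially the paper's: contradiction, Aubin--Lions compactness from the uniform $L^2H^1$ and $L^2H^{-1}$ bounds, identification of the limit via uniqueness (Theorem~\ref{ADEwellposed}) after splitting the flux term exactly as you do, and the upgrade to strong $L^2H^1$ via the Energy Identity at time $T$ plus weak lower semicontinuity. The key mechanism you correctly isolate---that only strong $L^2L^2$ compactness from viscosity lets the product $u_{n_k}f^{n_k,\nu_k}$ pass to the limit under mere weak-$*$ convergence of $u_{n_k}$---is precisely the paper's point.

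Two remarks. First, your claimed $C^0L^p$ precompactness ``by interpolation with $L^\infty L^\infty$'' from $C^0H^{-s}$ is false: take $g_n(x,t)=\sin(2\pi n x_1)$, which converges to $0$ in $C^0H^{-s}$ and is bounded in $L^\infty L^\infty$ but has $\|g_n(\cdot,t)\|_{L^2}$ bounded away from zero. Fortunately you do not actually use this claim. Second, your final step ``combined with Aubin--Lions this upgrades to strong $C^0L^2$'' needs more. Running the Energy Identity at every $t$ together with strong $L^2H^1$ convergence does give $\|f^{n_k,\nu_k}(\cdot,t)\|_{L^2}\to\|h(\cdot,t)\|_{L^2}$ \emph{uniformly} in $t$, but to turn this into uniform strong $L^2$ convergence you must also show $\langle f^{n_k,\nu_k}(\cdot,t),h(\cdot,t)\rangle\to\|h(\cdot,t)\|_{L^2}^2$ uniformly in $t$; since $h(\cdot,t)$ varies with $t$ and lies only in $L^2$, the $C^0H^{-s}$ convergence does not give this directly. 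The fix is a smooth approximation $\phi_\epsilon$ of $h$ in $C^0L^2$, which is exactly how the paper handles the $L^\infty L^2$ upgrade (there via the Trace Formula rather than Aubin--Lions for the weak pointwise-in-time convergence, but the structure is the same).
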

	\begin{proof}
		The case $p \in (2,\infty)$ follows from the same result for $p=2$ by interpolation with the existing uniform bound on $f^{n,\nu}$, $f^\nu$ in $L^\infty L^\infty$ from the $L^p$-Inequality \eqref{eqlpineq}, while for the case $p \in (1,2)$ it follows from the case for $p=2$ by compactness of $\mathbb{T}^d$. Therefore, we may restrict to $p=2$.
		
		Assume to the contrary that for some $0<a\le b$ that there exists $c > 0$ and sequences $\{n_i\}_{i \in \mathbb{N}}$, $\{\nu_i\}_{i \in \mathbb{N}}$ with $n_i \in \mathbb{N}$ increasing, $a\le\nu_i \le b$, such that for all $i \in\mathbb{N}$
		\begin{equation}\label{eqcontr3}
			\left\|f^{n_i,\nu_i} - f^{\nu_i}\right\|_{L^2H^1} + \left\|f^{n_i,\nu_i} - f^{\nu_i}\right\|_{L^\infty L^2} \ge c.
		\end{equation}
		
		By taking a subsequence if necessary we may assume $\nu_i \xrightarrow{i\to\infty} \nu$ for some $\nu\in[a,b]$.
		
		Since $\nu_i$ are bounded above and below, by Energy Identity \eqref{eqenergy} $f^{n_i, \nu_i}$, $f^{\nu_i}$ are uniformly bounded for all $i\in\mathbb{N}$ in $L^2 H^1$, and by Equicontinuity \eqref{eqequicont} $\frac{\partial}{\partial t}f^{n_i, \nu_i}$, $\frac{\partial}{\partial t}f^{\nu_i}$ are uniformly bounded for all $i\in\mathbb{N}$ in $L^2 H^{-1}$.
		
		Since $H^1 \Subset L^2 \subset H^{-1}$ we may apply the Aubin-Lions compactness Lemma (see Chapter 3, Theorem 2.1 in \cite{temam1979}) to deduce that the set $\{f \in L^2H^1 : \left\|\frac{\partial f}{\partial t}\right\|_{L^2 H^{-1}} \le C\}$ is compactly embedded into $L^2 L^2$.
		
		Hence, again taking a subsequence if necessary, we may assume that both $f^{n_i, \nu_i}$, $f^{\nu_i}$ converge, to some limits, strongly in $L^2L^2$ (and weakly in $L^2H^1$) as $i\to\infty$.
		
		We next show that both $f^{n_i, \nu_i}$, and $f^{\nu_i}$ converge to $f^\nu$ strongly in $L^2H^1$ and $L^\infty L^2$ as $i\to\infty$, contradicting the assumption \eqref{eqcontr3}.
		
		We will only show the required convergence for $f^{n_i, \nu_i} \xrightarrow {i \to \infty} f^{\nu}$, since the same result for $f^{\nu_i}$ follows the same proof by additionally assuming that $u_{n_i}$ are a constant sequence $u_{n_i} = u$ for all $i \in \mathbb{N}$.
		
		Since $u_{n_i} \xrightharpoonup{i \to \infty} u$ are uniformly bounded and converge in weak-$*$ $L^\infty L^\infty$, and as we have already shown that $f^{n_i,\nu_i}$ converges to some $\bar{f}$ strongly in $L^2L^2$ as $i\to\infty$, then the product $f^{n_i, \nu_i}u_{n_i}$ converges to $\bar{f}u$ weakly in $L^2 L^2$ as $i\to\infty$.
		
		Then by the weak formulation of \eqref{eqADE}, for any $\phi \in C_c^\infty(\mathbb{T}^d \times [0,T))$, one has
		\begin{multline*}
			\int_{\mathbb{T}^d\times[0,T]} \bar{f} \left( \frac{\partial \phi}{\partial t} + u\cdot\nabla \phi + \nu \Delta \phi\right) \; dxdt \\
			\begin{aligned}
				& = \lim_{i\to\infty} \int_{\mathbb{T}^d\times[0,T]} f^{n_i,\nu_i}\left(\frac{\partial \phi}{\partial t} + u_{n_i}\cdot\nabla \phi + \nu_i \Delta \phi\right) \; dxdt \\
				& = - \int_{\mathbb{T}^d} f_0 \phi_0 \; dx.
			\end{aligned}
		\end{multline*}
		
		Consequently, we see that $\bar{f}$ is a weak solution to \eqref{eqADE} along $u$ with initial data $f_0$. By Theorem \ref{ADEwellposed}, this solution is unique and so must be $f^\nu$.
		
		We are left to upgrade the convergence $f^{n_i,\nu_i} \xrightarrow{i\to\infty} \bar{f}$ from strong in $L^2 L^2$ and weak in $L^2 H^1$, to strong in $L^2 H^1$ and strong in $L^\infty L^2$.
		
		In light of the Trace Formula \eqref{eqtrace}, and the uniform bound on $f^{n_i,\nu_i}$ for all $i\in\mathbb{N}$ in $C^0L^2$ (by Theorem \ref{ADEwellposed}), strong convergence $f^{n_i,\nu_i} \xrightarrow{i\to\infty} f^\nu$ in $L^2 L^2$ implies $f^{n_i,\nu_i}(\cdot, T) \xrightharpoonup{i\to\infty} f^\nu(\cdot, T)$ converges weakly in $L^2(\mathbb{T}^d)$ (note that $f^{n_i,\nu_i}$, $f^{\nu}$ are uniformly bounded in $C^0L^2$ by Theorem \ref{ADEwellposed}). In particular
		\begin{equation*}
			\liminf_{i\to\infty} \int_{\mathbb{T}^d} \left|f^{n_i,\nu_i}(\cdot, T)\right|^2 \; dx \ge \int_{\mathbb{T}^d} \left|f^\nu(\cdot, T)\right|^2 \; dx.
		\end{equation*}
		
		In light of the Energy Identity \eqref{eqenergy}, this implies
		\begin{equation*}
			\limsup_{i\to\infty} \int_{\mathbb{T}^d \times [0,T]} \left|\nabla f^{n_i,\nu_i}\right|^2 \; dx dt \le \int_{\mathbb{T}^d \times [0,T]} \left|\nabla f^\nu\right|^2 \; dx dt.
		\end{equation*}
		
		Since also $f^{n_i,\nu_i} \xrightharpoonup{i\to\infty} f^\nu$ weakly in $L^2H^1$ we have
		\begin{equation*}
			\liminf_{i\to\infty} \int_{\mathbb{T}^d \times [0,T]} \left( \left|f^{n_i,\nu_i}\right|^2 + \left|\nabla f^{n_i,\nu_i}\right|^2 \right) \; dx dt \ge \int_{\mathbb{T}^d \times [0,T]} \left( \left|f^\nu\right|^2 + \left|\nabla f^\nu\right|^2 \right) \; dx dt.
		\end{equation*}
		
		However, since $f^{n_i,\nu_i} \xrightarrow{i\to\infty} f^\nu$ strongly in $L^2L^2$, from the above we must have convergence of the norms $\left\|f^{n_i,\nu_i}\right\|_{L^2H^1}^2 \xrightarrow{i\to\infty} \left\|f^\nu\right\|_{L^2H^1}^2$. Thus the weak convergence $f^{n_i,\nu_i} \xrightharpoonup{i\to\infty} f^\nu$ in $L^2H^1$ is in fact strong, as required.
		
		To extend to strong convergence in $L^\infty L^2$, we use the fact that $f^\nu \in C^0L^2$ to take a smooth approximation $\phi_\epsilon \in C^\infty(\mathbb{T}^d\times [0,T])$, such that $\left\|\phi_\epsilon - f^\nu\right\|_{L^\infty L^2} \le \epsilon$. Then by the Trace Formula \eqref{eqtrace}, and the Energy Identity \eqref{eqenergy}, for all $t \in [0,T]$,
		\begingroup
		\allowdisplaybreaks
		\begin{align*}
			& \left\|f^{n_i,\nu_i}(\cdot, t) - f^\nu(\cdot, t)\right\|_{L^2(\mathbb{T}^d)}^2 \\
			& \qquad =
			\begin{aligned}[t]
				& \left\|f^{n_i,\nu_i}(\cdot, t)\right\|_{L^2(\mathbb{T}^d)}^2 + \left\|f^\nu(\cdot, t)\right\|_{L^2(\mathbb{T}^d)}^2 - 2 \int_{\mathbb{T}^d} f^{n_i,\nu_i}(\cdot, t) f^\nu(\cdot, t)\;dx
			\end{aligned} \\
			& \qquad \le
			\begin{aligned}[t]
				& \left\|f^{n_i,\nu_i}(\cdot, t)\right\|_{L^2(\mathbb{T}^d)}^2 + \left\|f^\nu(\cdot, t)\right\|_{L^2(\mathbb{T}^d)}^2 + 2\epsilon\left\|f_0\right\|_{L^2} - 2 \int_{\mathbb{T}^d} f^{n_i,\nu_i}(\cdot, t) \phi_\epsilon(\cdot, t)\;dx
			\end{aligned} \\
			& \qquad =
			\begin{aligned}[t]
				& \left\|f^{n_i,\nu_i}(\cdot, t)\right\|_{L^2(\mathbb{T}^d)}^2 + \left\|f^\nu(\cdot, t)\right\|_{L^2(\mathbb{T}^d)}^2 + 2\epsilon\left\|f_0\right\|_{L^2} - 2 \int_{\mathbb{T}^d} f_0 \phi_\epsilon(\cdot, 0)\;dx \\
				& - 2\int_{\mathbb{T}^d \times [0,t]}f^{n_i,\nu_i} \left(\frac{\partial \phi_\epsilon}{\partial t} + u_{n_i} \cdot \nabla \phi_\epsilon + \nu_i \Delta \phi_\epsilon \right) \;dxdt
			\end{aligned} \\
			& \qquad \le
			\begin{aligned}[t]
				& \left\|f^{n_i,\nu_i}(\cdot, t)\right\|_{L^2(\mathbb{T}^d)}^2 + \left\|f^\nu(\cdot, t)\right\|_{L^2(\mathbb{T}^d)}^2 + 2\epsilon\left\|f_0\right\|_{L^2} - 2 \int_{\mathbb{T}^d} f_0 \phi_\epsilon(\cdot, 0)\;dx \\
				& - 2\int_{\mathbb{T}^d \times [0,t]}f^\nu \left(\frac{\partial \phi_\epsilon}{\partial t} + u \cdot \nabla \phi_\epsilon + \nu \Delta \phi_\epsilon \right) \;dxdt \\
				& + 2\left\|f^{n_i,\nu_i} - f^\nu\right\|_{L^2L^2}\left\|\frac{\partial \phi_\epsilon}{\partial t}\right\|_{L^2 L^2} + 2|\nu_i-\nu|\left\|f^{n_i,\nu_i}\right\|_{L^2L^2}\left\|\Delta \phi_\epsilon\right\|_{L^2 L^2} \\
				& + 2\left|\int_{\mathbb{T}^d \times [0,t]} (f^{n_i,\nu_i}u_{n_i} - f^\nu u)\cdot \nabla \phi_\epsilon \; dxdt \right|
			\end{aligned} \\
			& \qquad =
			\begin{aligned}[t]
				& \left\|f^{n_i,\nu_i}(\cdot, t)\right\|_{L^2(\mathbb{T}^d)}^2 + \left\|f^\nu(\cdot, t)\right\|_{L^2(\mathbb{T}^d)}^2 + 2\epsilon\left\|f_0\right\|_{L^2} - 2 \int_{\mathbb{T}^d} f^\nu(\cdot, t) \phi_\epsilon(\cdot, t)\;dx \\
				& + 2\left\|f^{n_i,\nu_i} - f^\nu\right\|_{L^2L^2}\left\|\frac{\partial \phi_\epsilon}{\partial t}\right\|_{L^2 L^2} + 2|\nu_i-\nu|\left\|f^{n_i,\nu_i}\right\|_{L^2L^2}\left\|\Delta \phi_\epsilon\right\|_{L^2 L^2} \\
				& + 2\left|\int_{\mathbb{T}^d \times [0,t]} (f^{n_i,\nu_i}u_{n_i} - f^\nu u)\cdot \nabla \phi_\epsilon \; dxdt \right|
			\end{aligned} \\
			& \qquad \le
			\begin{aligned}[t]
				& \left\|f^{n_i,\nu_i}(\cdot, t)\right\|_{L^2(\mathbb{T}^d)}^2 - \left\|f^\nu(\cdot, t)\right\|_{L^2(\mathbb{T}^d)}^2 + 4\epsilon\left\|f_0\right\|_{L^2} \\
				& + 2\left\|f^{n_i,\nu_i} - f^\nu\right\|_{L^2L^2}\left\|\frac{\partial \phi_\epsilon}{\partial t}\right\|_{L^2 L^2} + 2|\nu_i-\nu|\left\|f^{n_i,\nu_i}\right\|_{L^2L^2}\left\|\Delta \phi_\epsilon\right\|_{L^2 L^2} \\
				& + 2\left|\int_{\mathbb{T}^d \times [0,t]} (f^{n_i,\nu_i}u_{n_i} - f^\nu u)\cdot \nabla \phi_\epsilon \; dxdt \right|
			\end{aligned} \\
			& \qquad =
			\begin{aligned}[t]
				& 4\epsilon\left\|f_0\right\|_{L^2} - \int_{\mathbb{T}^d \times [0,t]} \left(2\nu_i \left|\nabla f^{n_i,\nu_i}\right|^2 - 2\nu \left|\nabla f^\nu\right|^2\right) \;dxdt  \\
				& + 2\left\|f^{n_i,\nu_i} - f^\nu\right\|_{L^2L^2}\left\|\frac{\partial \phi_\epsilon}{\partial t}\right\|_{L^2 L^2} + 2|\nu_i-\nu|\left\|f^{n_i,\nu_i}\right\|_{L^2L^2}\left\|\Delta \phi_\epsilon\right\|_{L^2 L^2} \\
				& + 2\left|\int_{\mathbb{T}^d \times [0,t]} (f^{n_i,\nu_i}u_{n_i} - f^\nu u)\cdot \nabla \phi_\epsilon \; dxdt \right|
			\end{aligned} \\
			& \qquad \le
			\begin{aligned}[t]
				& 4\epsilon\left\|f_0\right\|_{L^2} + 2\left\|{\nu_i}^\frac{1}{2}f^{n_i,\nu_i} - {\nu}^\frac{1}{2}f^\nu\right\|_{L^2 H^1}\left({\nu_i}^\frac{1}{2}\left\|f^{n_i,\nu_i}\right\|_{L^2 H^1} + {\nu}^\frac{1}{2}\left\|f^\nu\right\|_{L^2 H^1}\right) \\
				& + C_\epsilon\left(\left\|f^{n_i,\nu_i} - f^\nu\right\|_{L^2L^2} + |\nu_i-\nu|\left\|f^{n_i,\nu_i}\right\|_{L^2L^2}\right) \\
				& + 2\left|\int_{\mathbb{T}^d \times [0,t]} (f^{n_i,\nu_i}u_{n_i} - f^\nu u)\cdot \nabla \phi_\epsilon \; dxdt \right|,
			\end{aligned}
		\end{align*}
		\endgroup
		with $\epsilon>0$ arbitrary, and $C_\epsilon$ a constant that depends only on $\phi_\epsilon$, and not on $t \in [0,T]$.
		
		Therefore by the convergence $\nu_i \xrightarrow{i\to\infty} \nu$, and $f^{n_i,\nu_i} \xrightarrow{i\to\infty} f^\nu$ strongly in $L^2H^1$, to show strong convergence of $f^{n_i,\nu_i}\xrightarrow{i\to\infty}f^\nu$ in $L^\infty L^2$ we need only show, for fixed $\phi_\epsilon$
		\begin{equation*}
			\sup_{t \in[0,T]} \left|\int_{\mathbb{T}^d \times [0,t]} (f^{n_i,\nu_i}u_{n_i} - f^\nu u)\cdot \nabla \phi_\epsilon \; dxdt \right| \xrightarrow{i \to \infty} 0.
		\end{equation*}
		
		Suppose not, then there exists a sequence $t_i \in [0,T]$ for $i\in\mathbb{N}$ such that
		\begin{equation}
			\left|\int_{\mathbb{T}^d\times[0,t_i]}\left(f^{n_i,\nu_i}u_{n_i} - f^\nu u\right)\cdot\nabla\phi_\epsilon \; dxdt \right| \ge c, \label{eqcontr}
		\end{equation}
		for some $c>0$.
		
		By compactness of $[0,T]$, and taking a subsequence if necessary, we may further assume that $t_i \xrightarrow{i\to\infty} \bar{t}$ for some $\bar{t}\in[0,T]$. However, there is strong convergence $1_{t\in[0,t_i]}f^{n_i,\nu_i}, \; 1_{t\in[0,t_i]}f^\nu \xrightarrow{i\to\infty} 1_{t\in[0,\bar{t}]}f^\nu$ in $L^2 L^2$, and boundedness of the sequence $u_{n_i}$ in $L^\infty L^\infty$, and weak-$*$ convergence $u_{n_i} \xrightharpoonup{i\to\infty} u$ in $L^\infty L^\infty$. Therefore, the products $1_{t\in[0,t_i]}f^{n_i,\nu_i} u_{n_i} - 1_{t\in[0,t_i]}f^\nu u \xrightharpoonup{i\to\infty} 0$ converge weakly in $L^2 L^2$, contradicting \eqref{eqcontr}, and hence proving the claim.
	\end{proof}
	
	We now give the following corollary of the previous two propositions. By first fixing a viscosity $\nu_i>0$ and applying Proposition \ref{ADEtoADE}, we find that advection-diffusion is not sensitive to suitably small scale changes to the vector field $u$. However, inviscid transport is, and so we adjust the inviscid solution to suit our requirements without much change to advection-diffusion with viscosity $\nu$. Having done so, we then apply Proposition \ref{ADEtoTE} to find a smaller $\nu_{i+1}>0$ for which advection-diffusion is close to the adjusted inviscid transport. Repeating, we may make finer and finer adjustments to our vector field, for which a well-prepared vanishing sequence of viscosity $\{\nu_i\}_{i\in\mathbb{N}}$ permits a good approximation of advection-diffusion to suitable inviscid transport.
	
	\begin{theorem}\label{vcontrol}
		Fix some $M>0$, and a metric $d_*$ inducing the weak-$*$ topology on
		\begin{equation*}
			X = \left\{u\in L^\infty \left( [0,T];L^\infty(\mathbb{T}^d;\mathbb{R}^d)\right) : \left\|u\right\|_{L^\infty L^\infty}\le M \right\}.
		\end{equation*}
		
		Let $Y\subset X$ be the set of all divergence-free vector fields admitting a unique renormalised weak solution (unique in the class of all $L^\infty L^\infty$ weak solutions) to \eqref{eqTE} for any initial data $f_0 \in L^\infty(\mathbb{T}^d)$.
		
		Let $\left\{u_i\right\}_{i\in\mathbb{N}}\subset Y$, $f_0 \in L^\infty(\mathbb{T}^d)$. For each $n\in\mathbb{N}$, and $\nu > 0$, denote by $f^{n,\nu}$, respectively $f^n$, the unique weak solution to \eqref{eqADE}, respectively \eqref{eqTE}, along $u_n$ with initial data $f_0$. Then,
		\begin{enumerate}[label=\textbf{S.\arabic*},ref=S.\arabic*]
			\item \label{list1} For all $n\in\mathbb{N}$, there exists $\nu_n>0$, $\epsilon_n>0$ depending only on $\left\{u_i\right\}_{i=1}^n$ (and in particular not on $f_0$), with $\nu_n\xrightarrow{n\to\infty}0$ monotonically, such that the following hold true:
			\item \label{list2} For all $p\in[1,\infty)$,
			\begin{equation*}
				\sup_{0<\nu\le\nu_n}\left\|f^{n,\nu}-f^n\right\|_{L^\infty L^p}\xrightarrow{n\to\infty}0.
			\end{equation*}
			\item \label{list3} If $d_*(u_{n+1},u_n) \le \epsilon_n$ for all $n\in\mathbb{N}$, then $u_n \xrightharpoonup{n\to\infty} u_\infty$ converges in weak-$*$ $L^\infty L^\infty$ to some divergence-free vector field $u_\infty \in L^\infty L^\infty$,
			\item \label{list4} and if we denote by $f^{\infty,\nu}$ the unique weak solution to \eqref{eqADE} along $u_\infty$ with initial data $f_0$, then for all $p \in [1,\infty)$,
			\begin{equation*}
				\sup_{\nu_n \le \nu\le\nu_1}\left\|f^{n,\nu}-f^{\infty,\nu}\right\|_{L^\infty L^p}\xrightarrow{n\to\infty}0.
			\end{equation*}
			\item \label{list5} In particular, for all $p\in[1,\infty)$,
			\begin{equation*}
				\left\|f^{\infty,\nu_n}-f^n\right\|_{L^\infty L^p}\xrightarrow{n\to\infty}0.
			\end{equation*}
		\end{enumerate}
	\end{theorem}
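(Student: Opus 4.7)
The plan is to construct $\nu_n$ and $\epsilon_n$ inductively, using Proposition \ref{ADEtoTE} (applied to each $u_n \in Y$) to choose $\nu_n$ and Proposition \ref{ADEtoADE} to choose $\epsilon_n$. The prerequisite $f^n \in C^0 L^1$ needed for the $L^\infty L^p$ conclusion of Proposition \ref{ADEtoTE} is established as follows: since $u_n \in Y$, applying the trace formula \eqref{eqtrace} to $|f^n|^p$ (which is itself a weak solution along $u_n$ by the renormalisation property of $f^n$) shows $\|f^n(\cdot, t)\|_{L^p}$ is constant in $t$; combined with the weak-$*$ $L^\infty$-continuity from Theorem \ref{weakcont} and uniform convexity of $L^p$ for $p \in (1,\infty)$, this upgrades to $f^n \in C^0 L^p$, and then $f^n \in C^0 L^1$ by compactness of the torus. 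Hence Proposition \ref{ADEtoTE} yields $f^{n,\nu} \to f^n$ in $L^\infty L^p$ as $\nu \to 0$.

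At the inductive step, we pick $\nu_{n+1} \le \nu_n/2$ small enough --- via Proposition \ref{ADEtoTE} applied to $u_{n+1}$ --- so that $\sup_{0 < \nu \le \nu_{n+1}} \|f^{n+1,\nu} - f^{n+1}\|_{L^\infty L^1} \le 2^{-(n+1)}$; the analogous bound for every $p \in [1,\infty)$ follows by interpolation with the uniform $L^\infty L^\infty$-bound of size $2\|f_0\|_{L^\infty}$, giving S.1 and S.2. To pick $\epsilon_n$, define on the divergence-free subset of $X$ the functional $G_n(v) := \sup_{\nu \in [\nu_n, \nu_1]} \|f^{v,\nu} - f^{n,\nu}\|_{L^\infty L^1}$. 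Proposition \ref{ADEtoADE} (sequential continuity uniform in $\nu \in [\nu_n, \nu_1]$), together with the metrizability of the weak-$*$ topology on $X$ (a bounded subset of the dual of the separable space $L^1 L^1$), makes $G_n$ continuous at $u_n$, where $G_n(u_n) = 0$. We therefore pick $\epsilon_n$ small enough that (i) $\epsilon_n \le \epsilon_{n-1}/2$, ensuring $\sum_{k \ge n} \epsilon_k \le 2\epsilon_n$, and (ii) $d_*(v, u_n) \le 2\epsilon_n$ implies $G_n(v) \le 2^{-n}$.

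For S.3, the assumption $d_*(u_{n+1}, u_n) \le \epsilon_n$ with the geometric decay makes $\{u_n\}$ Cauchy in $d_*$; since $X$ is weak-$*$ compact by Banach-Alaoglu and metrizable, $u_n$ converges weakly-$*$ to some $u_\infty \in X$, with divergence-freeness preserved in the weak-$*$ limit. Then $d_*(u_\infty, u_n) \le \sum_{k \ge n} \epsilon_k \le 2\epsilon_n$, so by the calibration $G_n(u_\infty) \le 2^{-n}$, which is S.4 in $L^\infty L^1$, extending to all $p \in [1,\infty)$ by interpolation. Finally S.5 is the triangle inequality $\|f^{\infty,\nu_n} - f^n\|_{L^\infty L^p} \le \|f^{\infty,\nu_n} - f^{n,\nu_n}\|_{L^\infty L^p} + \|f^{n,\nu_n} - f^n\|_{L^\infty L^p}$, bounded by S.4 (evaluated at $\nu = \nu_n$) and S.2 respectively.

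The hard part is the apparent mismatch in S.4 between the interval $[\nu_n, \nu_1]$, whose left endpoint $\nu_n \to 0$, and the compact subintervals of $(0, \infty)$ on which Proposition \ref{ADEtoADE} provides uniform continuity. This is resolved by observing that $\{\nu_k\}$ being decreasing implies $[\nu_n, \nu_1] \subset [\nu_k, \nu_1]$ for every $k \ge n$, so each bound $G_k(u_{k+1}) \le 2^{-k}$ (valid on the larger interval $[\nu_k, \nu_1]$) restricts to $[\nu_n, \nu_1]$ and telescopes into $\sum_{k \ge n} 2^{-k} = 2^{-n+1} \to 0$ uniformly on $[\nu_n, \nu_1]$. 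A secondary delicate point is that the thresholds $\nu_n, \epsilon_n$ must not depend on $f_0$: this is transparent from the linearity of the solution maps in $f_0$ together with the uniform-in-$f_0$ operator bounds of size $2\|f_0\|_{L^p}$ from the $L^p$-inequalities, which allow each inductive calibration to be phrased purely in terms of the $u_i$'s.
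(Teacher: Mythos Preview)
Your overall architecture matches the paper's: apply Proposition~\ref{ADEtoTE} to choose $\nu_n$, Proposition~\ref{ADEtoADE} to choose $\epsilon_n$, arrange geometric decay so that $d_*(u_\infty,u_n)$ stays within the calibration radius, and read off \eqref{list4} directly (no telescoping is needed, since $[\nu_n,\nu_1]$ is already a compact subinterval of $(0,\infty)$ for each fixed $n$). Your explicit verification that $f^n \in C^0 L^1$ via renormalisation, constancy of $\|f^n(\cdot,t)\|_{L^p}$, and uniform convexity is a welcome addition that the paper leaves implicit.

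There is, however, a genuine gap in your treatment of the independence of $\nu_n,\epsilon_n$ from $f_0$. You write that this is ``transparent from linearity \ldots\ together with the uniform-in-$f_0$ operator bounds'', but linearity plus a uniform operator bound only gives \emph{pointwise} convergence of the operators $f_0 \mapsto f^{n,\nu}-f^n$ as $\nu\to 0$, not norm convergence; so one cannot choose a single $\nu_n$ making $\sup_{\nu\le\nu_n}\|f^{n,\nu}-f^n\|_{L^\infty L^1}\le 2^{-n}$ hold for \emph{every} $f_0$. Concretely, your calibrations ``$\sup_{\nu\le\nu_{n+1}}\|f^{n+1,\nu}-f^{n+1}\|_{L^\infty L^1}\le 2^{-(n+1)}$'' and ``$G_n(v)\le 2^{-n}$'' are statements about a particular $f_0$, and the $\nu_n,\epsilon_n$ they produce will in general depend on it.

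The paper fixes this by choosing a countable $L^1$-dense family $\{f_0^m\}_{m\in\mathbb N}\subset L^\infty(\mathbb T^d)$ and, at stage $n$, calibrating $\nu_n$ and $\delta_n$ (its analogue of your radius) only against the \emph{finitely many} data $f_0^1,\dots,f_0^n$. For an arbitrary $f_0$, one then uses the contraction estimates
\[
\|f^{n,\nu}-f^{m;n,\nu}\|_{L^\infty L^1}\le \|f_0-f_0^m\|_{L^1},\qquad
\|f^{n}-f^{m;n}\|_{L^\infty L^1}\le \|f_0-f_0^m\|_{L^1},
\]
(which are exactly the ``linearity plus $L^p$-inequality'' facts you cite) together with density to pass from the $f_0^m$ to $f_0$. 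This is the missing step: the uniform operator bound is what makes the density argument work, but the density argument itself has to be carried out.
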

	\begin{proof}
		By separability of $L^1(\mathbb{T}^d)$, take a countable dense sequence $\{f_0^m\}_{m \in\mathbb{N}}$ in $L^1(\mathbb{T}^d)$, satisfying $f_0^m \in L^\infty(\mathbb{T}^d)$.
		
		For all $m\in\mathbb{N}$, $n \in \mathbb{N}$, and $\nu > 0$, denote by $f^{m;n,\nu}$, respectively $f^{m;n}$, the unique weak solution to \eqref{eqADE}, respectively \eqref{eqTE}, along $u_n$ with initial data $f^m_0$. If in addition $u_n \xrightharpoonup{n\to\infty} u_\infty$ in weak-$*$ $L^\infty L^\infty$, denote by $f^{m;\infty,\nu}$ the unique weak solution to \eqref{eqADE} along $u_\infty$ with initial data $f_0^m$.
		
		For the given $f_0 \in L^\infty(\mathbb{T}^d)$, for each $n \in \mathbb{N}$, and $\nu > 0$, denote by $f^{n,\nu}$, respectively $f^n$, the unique weak solution to \eqref{eqADE}, respectively \eqref{eqTE}, along $u_n$ with initial data $f_0$. If in addition $u_n \xrightharpoonup{n\to\infty} u_\infty$ in weak-$*$ $L^\infty L^\infty$, denote by $f^{\infty,\nu}$ the unique weak solution to \eqref{eqADE} along $u_\infty$ with initial data $f_0$.
		
		Notice, by linearity of \eqref{eqADE}, \eqref{eqTE}, and the Initial $L^p$-Inequality \eqref{eqinitineq}, that for each $m \in \mathbb{N}$, $n\in\mathbb{N}$, $\nu >0$, we have
		\begin{gather}
			\left\|f^{n,\nu}-f^{m;n,\nu}\right\|_{L^\infty L^1} \le\left\|f_0 - f^m_0\right\|_{L^1}, \label{eqdensity1} \\
			\left\|f^{n} - f^{m;n}\right\|_{L^\infty L^1} \le\left\|f_0 - f^m_0\right\|_{L^1}, \label{eqdensity2} \\
			\left\|f^{\infty,\nu}-f^{m;\infty,\nu}\right\|_{L^\infty L^1}\le\left\|f_0 - f^m_0\right\|_{L^1}. \label{eqdensity3}
		\end{gather}
		
		We will use this and $L^1(\mathbb{T}^d)$-density to only consider the countable set of initial data $\{f_0^m\}_{m\in\mathbb{N}}$.
		
		Given $n\in\mathbb{N}$ and $\{u_i\}_{i=1}^n\subset Y$ we need to find $\nu_n>0$, $\epsilon_n>0$ such that \eqref{list1}-\eqref{list5} hold. For the purpose of the proof we additionally find $\delta_n>0$ such that, for all $m\in\{1,...,n\}$, $\nu\in[\nu_n,\nu_1]$, and $w\in Y$ with $d_*(w,u_n)\le\delta_n$,
		\begin{equation}\label{eqdelta}
			\left\|f^{m;n,\nu} - g^{m;\nu}\right\|_{L^\infty L^1} \le \frac{1}{n},
		\end{equation}
		where $g^{m,\nu}$ is the unique weak solution to \eqref{eqADE} along $w$ with initial data $f_0^m$.
		
		We do so by induction. Given $\{u_i\}_{i=1}^n$ as in the statement of the theorem assume $\{(\nu_i, \epsilon_i, \delta_i)\}_{i=1}^{n-1}$ are already chosen (an empty set if $n=1$).
		
		By applying Proposition \ref{ADEtoTE} for each initial data $\{f_0^m\}_{m=1}^n$ there exists some $\nu_n > 0$ such that for all $m\in\{1,...,n\}$, $\nu \in (0,\nu_n]$,
		\begin{equation}\label{eqnu}
			\left\|f^{m;n,\nu} - f^{m;n} \right\|_{L^\infty L^1} \le \frac{1}{n}.
		\end{equation}
		
		One may choose $\nu_n$ to be smaller, so that $\nu_n \le \frac{1}{n}$, and if $n\ne1$, $\nu_n < \nu_{n-1}$.
		
		We next find $\delta_n > 0$ satisfying \eqref{eqdelta}. Assume to the contrary that no such $\delta_n$ exists, then there is a sequence of divergence-free vector fields $\{w_k\}_{k \in \mathbb{N}} \subset Y$ with $w_k \xrightharpoonup{k\to\infty} u_n$ converging in weak-$*$ $L^\infty L^\infty$, which violate Proposition \ref{ADEtoADE} for at least one of the initial data $\{f_0^m\}_{m=1}^n$, where we have substituted $a=\nu_n$, $b = \nu_1$ into Proposition \ref{ADEtoADE}.
		
		One may choose $\delta_n$ smaller so that $\delta_n \le \frac{1}{n}$. Finally, take
		\begin{equation}\label{eqepsilon}
			\epsilon_n = \min_{k\in\{0,...,n-1\}}\{\delta_{n-k}2^{-k-1}\}.
		\end{equation}
		
		\eqref{list1} follows immediately from our choice of $\nu_n$. Meanwhile \eqref{list2} with $p=1$ follows from \eqref{eqdensity1}, \eqref{eqdensity2}, \eqref{eqnu}. Interpolation with the existing uniform bound on $f^{n,\nu}$, $f^n$ in $L^\infty L^\infty$ then gives \eqref{list2} for $p\in(1,\infty)$.
		
		We are now left to show \eqref{list3}-\eqref{list5} are satisfied when in addition $d_*(u_{n+1},u_n) \le \epsilon_n$ for all $n\in\mathbb{N}$.
		
		By \eqref{eqepsilon}, $\sum_{i=n}^\infty \epsilon_i \le \delta_n$, and also $\delta_n \to 0$, so $\{u_i\}_{i\in\mathbb{N}}$ is a $d_*$-Cauchy sequence, uniformly bounded in $L^\infty L^\infty$ by $M$, and so converges in weak-$*$ $L^\infty L^\infty$ to some limit $u_\infty$, proving \eqref{list3}.
		
		Moreover, for all $n\in\mathbb{N}$, $d_*(u_\infty, u_n) \le \delta_n$, so taking in \eqref{eqdelta} $w=u_\infty$, $g^{m;\nu}=f^{m;\infty,\nu}$ then \eqref{list4} follows from \eqref{eqdensity1}, \eqref{eqdensity3}, and interpolation with the existing uniform bound on $f^{n,\nu}$, $f^{\infty,\nu}$ in $L^\infty L^\infty$.
		
		Finally \eqref{list5} is a simple corollary of \eqref{list2} and \eqref{list4}.
	\end{proof}
	
	\section{Non-uniqueness}\label{Nonuniqueness}
	\subsection{Notation}\label{torusnotation}
	For $x \in \mathbb{R}^2$ we set $x=(x_1, x_2)$, and define the corresponding unit vectors $e_1 = (1,0)$, $e_2 = (0,1)$. For $i \in \{1,2\}$ an index, we define by $\hat{i}$ the other index, that is $\hat{i} \in \{1,2\}\setminus\{i\}$.
	
	When working on the 2-torus, $[x] \in \mathbb{T}^2 = \mathbb{R}^2/\mathbb{Z}^2$ shall instead always be written in terms of a representative $x \in \mathbb{R}^2$, usually $x \in [0,1)^2$. For notational convenience, functions on the torus $\mathbb{T}^2$ will often be considered periodic functions on $\mathbb{R}^2$, and vice versa. Integrals over $\mathbb{T}^2$ are then strictly speaking over $[0,1)^2$, such as when taking norms.
	\vspace{\baselineskip}
	
	First, we define the shear flows that form the building block of our construction. We shall later apply these shears in a carefully designed order reminiscent of a cantor subset of $[0,T]$. The arising self-cancellation of these shear flows will produce non-uniqueness of renormalised weak solutions (Definition \ref{renormalised}) for any initial data to \eqref{eqTE}. Moreover, the small spatial scale of the shears, see \eqref{eqshearweakconv} below, will allow us to approximate the vanishing viscosity limit of \eqref{eqADE}.
	\begin{definition}[Lagrangian shear flow]\label{shearflow}
		For $L\in\mathbb{N}$, we divide $\mathbb{R}$ into disjoint intervals $\bigcup_{m \in \mathbb{Z}} \left[\frac{m}{2L},\frac{m+1}{2L}\right)$. We define for each $i\in\{1,2\}$, $L\in \mathbb{N}$, the periodic vector field $u^{(i;L)}: \mathbb{T}^2 \to \mathbb{R}^2$,
		\begin{equation*}
			u^{(i;L)}(x) = \begin{cases} e_i & \text{if } x_{\hat{i}} \in \left[\frac{m}{2L},\frac{m+1}{2L}\right) \text{ for even $m$}, \\ -e_i & \text{if } x_{\hat{i}} \in \left[\frac{m}{2L},\frac{m+1}{2L}\right) \text{ for odd $m$}, \end{cases}
		\end{equation*}
		where, since $2L$ is even, the definition is a periodic function of $x\in\mathbb{R}^2$, and so is well-defined on $\mathbb{T}^2$. $u^{(i;L)}$ is bounded and divergence-free in the distributional sense since it is of the form $u^{(1;L)}(x)=(g(x_2),0)$, or $u^{(2;L)}(x)=(0,g(x_1))$ for some $g \in L^\infty(\mathbb{T}^2)$. We refer to this vector field as the $(i;L)$-Lagrangian shear.
		
		We shall denote by $\big\{y_t^{(i;L)}\big\}_{t \in (-\infty, \infty)}$ the following Lagrangian flow (Definition \ref{lagrangian}) along $u^{(i;L)}$,
		\begin{equation*}
			y_t^{(i;L)} : \begin{array}[t]{rcl} \mathbb{T}^2 & \to & \mathbb{T}^2 \\ x & \mapsto & \begin{cases} x+t e_i \mod \mathbb{Z}^2 & \text{if } x_{\hat{i}} \in \left[\frac{m}{2L},\frac{m+1}{2L}\right) \text{ for even $m$}, \\ x-t e_i \mod \mathbb{Z}^2 & \text{if } x_{\hat{i}} \in \left[\frac{m}{2L},\frac{m+1}{2L}\right) \text{ for odd $m$}. \end{cases} \end{array}
		\end{equation*}
		
		As per Definition \ref{lagrangian} of Lagrangian flows, this preserves the Lebesgue-measure on $\mathbb{T}^2$, e.g. for $i=1$ we can decompose a Lebesgue-measurable subset $A \subset \mathbb{T}^2$ into $A_m = A \cap \left(\mathbb{T} \times \left[\frac{m}{2L},\frac{m+1}{2L}\right)\right)$, for which each $\big(y_t^{(1;L)}\big)^{-1}(A_m)$ is a translation. Moreover, it is invertible with inverse given by $\big(y_t^{(i;L)}\big)^{-1}(x) = y_{-t}^{(i;L)}(x)$, and is absolutely continuous since with respect to $t$ since it is differentiable with derivative $\frac{\partial}{\partial t} y_t^{(i;L)} (x) = u^{(i;L)}(x) = u^{(i;L)}(y_t^{(i;L)}(x))$. Therefore, $\big\{y_t^{(i;L)}\big\}_{t \in (-\infty, \infty)}$ is a Lagrangian flow along $u^{(i;L)}$.
		
		Finally, for each $i \in \{1,2\}$, we show
		\begin{equation}\label{eqshearweakconv}
			u^{(i;L)} \xrightharpoonup{L\to\infty} 0,
		\end{equation}
		with convergence in weak-$*$ $L^\infty(\mathbb{T}^2)$.
		
		The proof of \eqref{eqshearweakconv} is similar to that of the Riemann-Lebesgue lemma. Notice that $u^{(i;L)}\left(x+\frac{1}{2L}e_{\hat{i}}\right) = - u^{(i;L)}(x)$ and so for any test function $f \in L^1(\mathbb{T}^2)$, by changing variables we have that
		\begin{equation*}
			\int_{\mathbb{T}^2} f(x)u^{(i;L)}(x) \; dx = \int_{\mathbb{T}^2} \frac{f(x) - f\left(x+\frac{1}{2L}e_{\hat{i}}\right)}{2} u^{(i;L)}(x) \; dx.
		\end{equation*}
		
		Now $f\left(x+\frac{1}{2L}e_{\hat{i}}\right) \xrightarrow{L\to\infty} f$ converges strongly in $L^1(\mathbb{T}^2)$. Together with the uniform bound $\left\| u^{(i;L)} \right\|_{L^\infty} \le 1$ this implies the above integral converges to zero as $L\to\infty$, as required.
	\end{definition}
	\vspace{\baselineskip}
	
	Next, we show the well-posedness of \eqref{eqTE} along these vector fields. A weaker version of the following result, valid when $f\in L^\infty L^\infty$, follows from the breakthrough well-posedness theory of Ambrosio for vector fields of bounded variation in space, \cite{ambrosio2004transport}. This is, at its heart, rather more involved than required for the simple case of shear flows. Therefore, we prefer instead to give a direct elementary proof of the uniqueness for all $f\in L^1 L^1$.
	\begin{proposition}[Shear flow uniqueness]\label{shearuniq}
		We follow the notation introduced in Definition \ref{shearflow}.
		
		Suppose $f\in L^1L^1$ is a weak solution to \eqref{eqTE} along $u^{(i;L)}$ on an open interval $I\subset(0,T)$. Then there exists some $g \in L^1(\mathbb{T}^2)$ such that $f$ is (a.e. in $\mathbb{T}^2\times I$) equal to the following Lagrangian solution (Definition \ref{lagrangian}) associated with $g$,
		\begin{equation*}
			f(\cdot,t)=g \circ \big(y^{(i;L)}_t\big)^{-1}.
		\end{equation*}
		
		In particular, $f$ is a renormalised weak solution to \eqref{eqTE} (Definition \ref{renormalised}). Moreover, $g \circ \big(y^{(i;L)}_t\big)^{-1}\in C^0((-\infty,\infty);L^1(\mathbb{T}^2))$.
	\end{proposition}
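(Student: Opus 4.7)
The plan is to localise the equation strip by strip and reduce to the classical one-dimensional transport with constant speed. Taking $i=1$ for concreteness, set $S_m := \mathbb{T} \times \bigl(\tfrac{m}{2L}, \tfrac{m+1}{2L}\bigr)$; on $S_m$ the vector field $u^{(1;L)} \equiv \sigma_m e_1$ is a constant vector with $\sigma_m \in \{\pm 1\}$. Since every smooth test function compactly supported in $\mathbb{T}\times S_m \times I$ is admissible in the weak formulation, the restriction of $f$ to $S_m\times I$ is a distributional weak solution of $\partial_t f + \sigma_m \partial_{x_1} f = 0$. The target is to show, on each strip, that $f(x_1,x_2,t) = h^{(m)}(x_1 - \sigma_m t, x_2)$ for some $h^{(m)} \in L^1(S_m)$, and then to glue these across strips.

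To make this rigorous in the $L^1L^1$ class I would expand in Fourier series along $x_1$. For each $k \in \mathbb{Z}$, set $F_k(x_2,t) := \int_{\mathbb{T}} f(x_1,x_2,t) e^{-2\pi i k x_1} \, dx_1 \in L^1(S_m \times I)$. Testing the equation against $e^{-2\pi i k x_1}\psi(x_2,t)$ for $\psi \in C_c^\infty(S_m \times I)$ shows $\partial_t F_k + 2\pi i k \sigma_m F_k = 0$ in $\mathcal{D}'(S_m\times I)$; rewriting via the integrating factor, $\partial_t\bigl(e^{2\pi i k \sigma_m t} F_k\bigr) = 0$, so by the classical fact that a distribution on a product domain with vanishing $t$-derivative is the pull-back of a distribution on the base, there exists $h_k^{(m)} \in L^1\bigl(\tfrac{m}{2L},\tfrac{m+1}{2L}\bigr)$ with $F_k(x_2,t) = e^{-2\pi i k \sigma_m t} h_k^{(m)}(x_2)$. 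Picking any $t_0 \in I$ for which $f(\cdot,\cdot,t_0) \in L^1(\mathbb{T}^2)$ (such $t_0$ are generic by Fubini) and reassembling the Fourier series yields $f(x_1,x_2,t) = h^{(m)}(x_1 - \sigma_m t, x_2)$ on $S_m \times I$, with $h^{(m)}(x_1,x_2) := f(x_1 + \sigma_m t_0, x_2, t_0)$.

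Now define $g \in L^1(\mathbb{T}^2)$ by $g(x) := f\bigl(y_{t_0}^{(1;L)}(x), t_0\bigr)$; within each strip $y_{t_0}^{(1;L)}$ acts as the pure translation $(x_1,x_2) \mapsto (x_1+\sigma_m t_0, x_2)$, so $g|_{S_m} = h^{(m)}$. Since $\bigcup_m S_m$ has full measure and $y_t^{-1}$ preserves each strip, this gives the identity $f(\cdot, t) = g \circ \bigl(y_t^{(1;L)}\bigr)^{-1}$ a.e. on $\mathbb{T}^2 \times I$. The continuity statement $g \circ y_t^{-1} \in C^0\bigl((-\infty,\infty); L^1(\mathbb{T}^2)\bigr)$ reduces to the standard strong continuity of translations in $L^1$: approximate $g$ uniformly in $L^1$ by continuous functions (using that $y_t^{-1}$ is measure-preserving so the composition is an $L^1$-isometry), and handle each smooth approximation by dominated convergence. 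The renormalisation assertion then follows at once from Remark \ref{lagrenorm}, since $u \in L^\infty$ combined with $f \in L^1L^1$ gives $uf \in L^1L^1$.

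The main technical subtlety I expect lies in the distributional ODE step -- converting the vanishing of $\partial_t\bigl(e^{2\pi i k \sigma_m t} F_k\bigr)$ into an honest $L^1$ identity rather than merely a distributional one, and then in justifying that the family of such identities over $k \in \mathbb{Z}$ reassembles to the stated pointwise identity for $f$. A slicker alternative, avoiding Fourier summation entirely, is to mollify $f$ in $x_1$ only; since $u^{(1;L)}$ is independent of $x_1$, convolution in $x_1$ commutes with the transport operator, producing $f_\varepsilon \in L^1(I; C^\infty_{x_1} L^1_{x_2})$ satisfying the same equation classically in $x_1$. For each such $f_\varepsilon$, integration along the characteristics of the constant field $\sigma_m e_1$ within $S_m$ gives the translation identity directly, and passing $\varepsilon \to 0$ in $L^1$ recovers the claim.
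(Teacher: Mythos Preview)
Your argument is correct: localising to each strip $S_m$ where the coefficient is the constant $\sigma_m e_1$, then killing the equation either by Fourier series in $x_1$ or by mollifying in $x_1$, both cleanly reduce to the identity $\partial_t\bigl(f(x_1+\sigma_m t,x_2,t)\bigr)=0$ in distributions, after which the rest is routine. The reassembly over $k\in\mathbb{Z}$ is fine because the exceptional null sets are countable in $k$, and your continuity and renormalisation paragraphs match the paper's exactly.

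The paper takes a slightly different, more coordinate-free route: rather than Fourier-analysing along the flow direction, it pulls back the \emph{test function} by the Lagrangian flow, setting $\psi(x,t)=\phi\bigl(y_t^{-1}(x),t\bigr)$ and observing that $(\partial_t+u\cdot\nabla)\psi=(\partial_t\phi)\circ y_t^{-1}$; substituting $\psi$ into the weak formulation and changing variables gives $\int f(y_t(x),t)\,\partial_t\phi\,dx\,dt=0$ directly, i.e.\ $f\circ y_t$ is $t$-independent as a distribution. This avoids Fourier summation and the associated bookkeeping entirely. More importantly, it generalises verbatim to any piecewise-smooth Lagrangian flow, not only constant-coefficient transport---and indeed the paper reuses the identical argument in Proposition~\ref{swapuniq} for the rotational ``binary swap'' flows, where your Fourier-in-$x_1$ approach would not apply without modification. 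Your mollification alternative is closer in spirit to the paper's method and would adapt more readily, but the test-function pullback is the cleanest and most portable formulation.
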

	\begin{proof}
		Without loss of generality suppose $i=1$, and use the shorthand $y_t=y_t^{(1;L)}$ for each $t \in (-\infty, \infty)$, that is
		\begin{equation*}
			y_t(x) = \begin{cases} x+t e_1 & \text{if } x_{2} \in \left[\frac{m}{2L},\frac{m+1}{2L}\right) \text{ for even $m$}, \\ x-t e_1 & \text{if } x_{2} \in \left[\frac{m}{2L},\frac{m+1}{2L}\right) \text{ for odd $m$}. \end{cases}
		\end{equation*}
		
		Since the $e_2$-component of $y_t(x)$ is unchanged, that is $[y_t(x)]_2 = x_2$, we see
		\begin{equation*}
			u^{(1;L)}(y_t(x),t) = u^{(1;L)}(x,t).
		\end{equation*}
		
		Fix $m \in \{0,1,...2L-1\}$. Take a test function $\phi \in C_c^\infty\left(\left(\mathbb{T} \times \left(\frac{m}{2L},\frac{m+1}{2L}\right)\right) \times I\right)$, and let $\psi(x,t)=\phi(y_t^{-1}(x), t)$. This is well-defined since $y_t^{-1}$ preserves the set $\mathbb{T} \times \left(\frac{m}{2L},\frac{m+1}{2L}\right)$.
		
		Then $\psi \in C_c^\infty\left(\left(\mathbb{T} \times \left(\frac{m}{2L},\frac{m+1}{2L}\right)\right) \times I\right)$ is bounded, and supported and smooth on $\mathbb{T} \times \left(\frac{m}{2L},\frac{m+1}{2L}\right) \times I$ since $y_t^{-1}$ is smooth there. By the chain rule, the following point-wise equality holds
		\begin{equation*}
			\left(\frac{\partial \psi}{\partial t} + u^{(1;L)}\cdot\nabla \psi\right)(x,t) = \frac{\partial \phi}{\partial t}(y_t^{-1}(x),t).
		\end{equation*}
		
		Since $f \in L^1L^1$ is assumed to be a weak solution to \eqref{eqTE} along $u^{(1;L)}$ on the open interval $I\subset (0,T)$,
		\begin{equation*}
			\int_{\mathbb{T} \times \left(\frac{m}{2L},\frac{m+1}{2L}\right) \times I} f(x,t) \left(\frac{\partial \psi}{\partial t} + u^{(1;L)}\cdot\nabla \psi\right)(x,t) \; dxdt = 0,
		\end{equation*}
		which implies
		\begin{equation*}
			\int_{\mathbb{T} \times \left(\frac{m}{2L},\frac{m+1}{2L}\right) \times I} f(x,t) \frac{\partial \phi}{\partial t}(y_t^{-1}(x),t) \; dxdt = 0,
		\end{equation*}
		and after changing variables,
		\begin{equation*}
			\int_{\mathbb{T} \times \left(\frac{m}{2L},\frac{m+1}{2L}\right) \times I} f(y_t(x), t) \; \frac{\partial \phi}{\partial t}(x,t) \; dxdt = 0.
		\end{equation*}
		
		This holds for all $\phi \in C_c^\infty\left(\left(\mathbb{T} \times \left(\frac{m}{2L},\frac{m+1}{2L}\right)\right) \times I\right)$, and so as a distribution $f(y_t(x), t) \in L^1L^1$ is independent of $t\in I$, see for example Theorem 3.1.4' in \cite{hormander2003}. Therefore, there must exists some $g_m \in L^1(\mathbb{T} \times \left(\frac{m}{2L},\frac{m+1}{2L}\right))$ such that (for a.e. $t \in I$)
		\begin{equation*}
			f(y_t(x), t) = g_m(x) \text{ a.e. on } \mathbb{T} \times \left(\frac{m}{2L},\frac{m+1}{2L}\right).
		\end{equation*}
		
		Repeating for each $m \in \{0,1,...2L-1\}$ gives some $g \in L^1(\mathbb{T}^2)$ such that,
		\begin{equation*}
			f(y_t(x), t) = g(x) \text{ a.e. on } \mathbb{T}^2.
		\end{equation*}
		
		And so $f(\cdot, t) = g \circ y_t^{-1}$ is a Lagrangian solution to \eqref{eqTE} (Definition \ref{lagrangian}), and hence also a renormalised weak solution (Definition \ref{renormalised}, Remark \ref{lagrenorm}).
		
		To prove that $g\circ y_t^{-1} \in C^0((-\infty,\infty);L^1(\mathbb{T}^2))$ we shall use the fact that $y_t$ is bijective, measure preserving, and $1$-Lipschitz in time, i.e. for all $x \in \mathbb{T}^2$, $t,s \in \mathbb{R}$
		\begin{equation*}
			\left|y_t(x) - y_s(x)\right| \le |t-s|,
		\end{equation*}
		and so, by replacing $x$ with $y_s^{-1}(x)$,
		\begin{equation}\label{eqshearlip}
			\left|y_t \circ y_s^{-1}(x) - x\right| \le |t-s|.
		\end{equation}
		
		The following follows a standard argument. We fix $t \in I$ and write
		\begin{equation*}
			g\circ y_s^{-1} = \left( g\circ y_t^{-1}\right) \circ y_t \circ y_s^{-1}.
		\end{equation*}
		
		Now mollify $g\circ y_t^{-1} $ in $L^1(\mathbb{T}^2)$, that is for each $\epsilon > 0$ take some $\phi_\epsilon \in C^\infty(\mathbb{T}^2)$ such that
		\begin{equation*}
			\left\|\phi_\epsilon - g\circ y_t^{-1}\right\|_{L^1(\mathbb{T}^2)} \le \epsilon.
		\end{equation*}
		
		But also, since $y_t$, $y_s^{-1}$ are measure preserving,
		\begin{equation*}
			\left\|\phi_\epsilon \circ y_t \circ y_s^{-1} - g\circ y_s^{-1}\right\|_{L^1(\mathbb{T}^2)} \le \epsilon.
		\end{equation*}
		
		Then since $\phi_\epsilon$ is uniformly continuous on $\mathbb{T}^2$, by \eqref{eqshearlip} there exists $\delta \in (0,1)$ such that if $|t-s| < \delta$ then $\|\phi_\epsilon \circ y_t \circ y_s^{-1} - \phi_\epsilon\|_{L^1} \le \epsilon$, and so 
		\begin{equation*}
			\|g\circ y_t^{-1} - g\circ y_s^{-1}\|_{L^1(\mathbb{T}^2)} \le 3 \epsilon,
		\end{equation*}
		which completes the proof.
	\end{proof}
	\vspace{\baselineskip}
	
	Next, we demonstrate how these shear flows self-cancel. It is this property that gives rise to non-uniqueness in our later construction. This behaviour is quite different to recent works on anomalous dissipation, \cite{drivas2019anomalous,elgindi2023norm}, and even \cite{scoop,armstrong2023anomalous}, which also achieve some non-uniqueness of the vanishing viscosity limit. Contrary to our approach, these constructions aim to exploit some mixing effect of shear flows, and non-uniqueness occurs by exploiting different mixing along different viscosity subsequences. In contrast, the commutator relation below ensures that shear flows in our construction precisely reverse mixing that occurs previously. Non-uniqueness instead occurs by quantitatively different transport along different viscosity subsequences rather than by quantitatively different mixing along different viscosity subsequences.
	\begin{proposition}[Cancellation]\label{shearcanc}
		Let $L_1, L_2 \in \mathbb{N}$, and $i_1, i_2 \in \{1,2\}$ such that $i_1\ne i_2$. Suppose that $2\tau_2 = \frac{1}{2L_1}$, and that $2L_2\tau_1$ is an odd integer.
		
		Then the composition
		\begin{gather*}
			\left(y_{\tau_2}^{(i_2;L_2)}\right)^2 \circ y_{\tau_1}^{(i_1;L_1)} \circ \left(y_{\tau_2}^{(i_2;L_2)}\right)^2 \circ y_{\tau_1}^{(i_1;L_1)} = \mathrm{Id},
		\end{gather*}
		where we have denoted by $f^2 := f \circ f$.
	\end{proposition}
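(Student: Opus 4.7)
Without loss of generality take $i_1=1$, $i_2=2$. The plan is to track the action of the four maps on an arbitrary point $x=(x_1,x_2)\in\mathbb{T}^2$ coordinate by coordinate, using that each Lagrangian shear alters exactly one coordinate by $\pm$ a fixed amount depending on the parity of a strip index in the other coordinate. The two numerical hypotheses should then kick in exactly to guarantee that, after two applications of each shear, the sign picked up the second time is opposite to the sign picked up the first time, so everything cancels.

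Concretely, introduce the strip-parity signs
\[
\sigma_1(y)=\begin{cases}+1 & \lfloor 2L_1 y_2\rfloor \text{ even}\\ -1 & \lfloor 2L_1 y_2\rfloor \text{ odd}\end{cases},\qquad
\sigma_2(y)=\begin{cases}+1 & \lfloor 2L_2 y_1\rfloor \text{ even}\\ -1 & \lfloor 2L_2 y_1\rfloor \text{ odd}\end{cases},
\]
so that $y_{\tau_1}^{(1;L_1)}(y)=y+\sigma_1(y)\tau_1 e_1$ and, since $\bigl(y_{\tau_2}^{(2;L_2)}\bigr)^2$ preserves the $x_1$-coordinate, $\bigl(y_{\tau_2}^{(2;L_2)}\bigr)^2(y)=y+\sigma_2(y)\,2\tau_2\, e_2=y+\sigma_2(y)\tfrac{1}{2L_1}e_2$ (using $2\tau_2=\tfrac{1}{2L_1}$). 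Denote successive images $x^{(1)},x^{(2)},x^{(3)},x^{(4)}$ under the four maps (applied right-to-left in the composition); the plan is to verify
\[
x^{(4)}=x
\]
by showing $\sigma_1(x^{(2)})=-\sigma_1(x)$ and $\sigma_2(x^{(3)})=-\sigma_2(x^{(1)})$.

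The first identity is the easy one: passing from $x^{(1)}$ (which has $x_2$-coordinate $x_2$) to $x^{(2)}$ adds $\pm\tfrac{1}{2L_1}$ to the second coordinate, i.e.\ exactly one full strip-width in the $(1;L_1)$-stratification, so the parity of $\lfloor 2L_1\,(x^{(2)})_2\rfloor$ is opposite to that of $\lfloor 2L_1\,x_2\rfloor$. The second identity uses the hypothesis that $2L_2\tau_1$ is an odd integer: the first coordinate changes from $(x^{(1)})_1=x_1+\sigma_1(x)\tau_1$ to $(x^{(3)})_1$, which (by the first identity applied to compute $x^{(3)}$) equals $x_1$, so the shift in the first coordinate between $x^{(1)}$ and $x^{(3)}$ is $-\sigma_1(x)\tau_1$, a translation by an odd multiple of $\tfrac{1}{2L_2}$, which flips the parity of $\lfloor 2L_2(\cdot)_1\rfloor$. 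Combining the two sign flips with the explicit additive formulas yields $x^{(4)}=x$.

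The only real subtlety is the measure-zero set of points lying exactly on the discontinuities of $\sigma_1$ or $\sigma_2$ (the strip boundaries in $\tfrac{1}{2L_1}\mathbb{Z}$ and $\tfrac{1}{2L_2}\mathbb{Z}$), and the compatibility with the half-open convention used in Definition \ref{shearflow}. This is the only place where one has to be careful: one should check that translating $x_2$ by $\tfrac{1}{2L_1}$ carries the half-open strip $[\tfrac{m}{2L_1},\tfrac{m+1}{2L_1})$ bijectively onto $[\tfrac{m+1}{2L_1},\tfrac{m+2}{2L_1})$, which is immediate, and similarly in the $x_1$-direction using that $2L_2\tau_1$ is an integer. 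Modulo this bookkeeping, the argument above gives the identity on all of $\mathbb{T}^2$, not just almost everywhere.
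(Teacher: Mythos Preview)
Your argument is correct and is essentially the same as the paper's proof: both track a point through the four maps, encode the action of each shear as an additive shift whose sign is governed by a strip-parity, and use the two numerical hypotheses to show that each parity flips exactly once so that the shifts cancel in pairs. The only cosmetic difference is that the paper first rewrites $\bigl(y_{\tau_2}^{(i_2;L_2)}\bigr)^2=y_{2\tau_2}^{(i_2;L_2)}$ and proves the symmetric statement under the hypothesis that both $2L_1(2\tau_2)$ and $2L_2\tau_1$ are odd integers, whereas you keep the original hypotheses; the parity-tracking and the chain $x^{(0)}\to\cdots\to x^{(4)}$ are identical.
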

	\begin{proof}
		Without loss of generality, suppose $i_1=1,i_2=2$.
		
		Since $\left(y_{\tau_2}^{(2;L_2)}\right)^2 = y_{2\tau_2}^{(2;L_2)}$ we instead prove the more general result
		\begin{gather*}
			y_{\tau_2}^{(2;L_2)}\circ y_{\tau_1}^{(1;L_1)}\circ y_{\tau_2}^{(2;L_2)}\circ y_{\tau_1}^{(1;L_1)} = \mathrm{Id},
		\end{gather*}
		whenever both $2L_1\tau_2$ and $2L_2\tau_1$ are odd integers.
		
		To this end, we divide $\mathbb{T}^2$ into tiles as follows. Given $x \in [0,1)^2$ we find the unique integers $m_1, m_2 \in \mathbb{Z}$ such that
		\begin{equation}\label{eqequivclass}
			x \in \left[\frac{m_1}{2L_2},\frac{m_1+1}{2L_2}\right) \times \left[\frac{m_2}{2L_1},\frac{m_2+1}{2L_1}\right).
		\end{equation}
		
		We are concerned with the parity of $m_1$ and $m_2$ (even or odd), that is we have four `colours' on our tiling, two for each coordinate.
		
		This defines two equivalence relations $\left[\cdot\right]_{m_1}, \left[\cdot\right]_{m_2}$ on $\mathbb{T}^2$, each with two equivalence classes corresponding to the parity of either $m_1$ or $m_2$ in \eqref{eqequivclass}.
		
		Since $2L_1\tau_2$, $2L_2\tau_1$ are odd integers, the action of $y_{\tau_1}^{(1;L_1)}$ changes the parity of $m_1$ and not $m_2$, while the action of $y_{\tau_2}^{(2;L_2)}$ changes the parity of $m_2$ and not $m_1$.
		
		Introduce the shorthand
		\begingroup
		\allowdisplaybreaks
		\begin{gather*}
			x^{(0)}=x, \\
			x^{(1)}=y_{\tau_1}^{(1;L_1)}(x^{(0)}), \\
			x^{(2)}=y_{\tau_2}^{(2;L_2)}(x^{(1)}), \\
			x^{(3)}=y_{\tau_1}^{(1;L_1)}(x^{(2)}), \\
			x^{(4)}=y_{\tau_2}^{(2;L_2)}(x^{(3)}).
		\end{gather*}
		\endgroup
		
		Then, by the above discussion,
		\begin{gather*}
			\left[x^{(2)}\right]_{m_2} \ne \left[x^{(0)}\right]_{m_2}, \\
			\left[x^{(3)}\right]_{m_1} \ne \left[x^{(1)}\right]_{m_1}.
		\end{gather*}
		
		From the definitions of $y_{\tau_1}^{(1;L_1)}, y_{\tau_2}^{(2;L_2)}$ this implies
		\begin{gather*}
			x^{(3)} - x^{(2)} = -\left(x^{(1)} - x^{(0)}\right), \\
			x^{(4)} - x^{(3)} = -\left(x^{(2)} - x^{(1)}\right),
		\end{gather*}
		and hence $x^{(4)} = x^{(0)}$ as required.
	\end{proof}
	\vspace{\baselineskip}
	
	Now for some sequences $\{L_k\}_{k\in\mathbb{N}}$, $\{i_k\}_{k\in\mathbb{N}}$, $\{\tau_k\}_{k\in\mathbb{N}}$ to be specified later, we first choose the order in which to apply these shear flows $y_{\tau_k}^{(i_k;L_k)}$. We use a double index $(k,m)\in\mathcal{D}\subset\mathbb{Z}^2$, where $k$ shall index the parameters of the Lagrangian shear $y_{\tau_k}^{(i_k;L_k)}$, while $m$ denotes the $m$\textsuperscript{th} occurrence of that particular shear.
	
	To exploit Proposition \ref{shearcanc}, we shall ensure that there are exactly two occurrences of a suitable shear $y_{\tau_{k+1}}^{(i_{k+1};L_{k+1})}$ between each occurrence of $y_{\tau_k}^{(i_k;L_k)}$. If we denote by $t_{k,m}\in[0,1]$ the times at which the shears are first applied, we may illustrate this construction in the following diagram:
	\begin{equation*}
		\begin{tikzpicture}
			\pgfdeclarelayer{background}
			\pgfsetlayers{background,main}
			\begin{scope}[scale=12]
				\draw[] (0,0) -- (1,0);
				\node[anchor=east] (t0) at (0,0) {$t=0$};
				\node[anchor=west] (t1) at (1,0) {$t=1$};
				\node[yshift=20mm] (t10) at (0,0) {${t_{1,0}}$};
				\draw[<-] ([yshift=6/12mm]0,0) -- (t10);
				\node[anchor=base,yshift=20mm] (t11) at (1/2,0) {${t_{1,1}}$};
				\draw[<-] ([yshift=6/12mm]1/2,0) -- (t11);
				\node[anchor=base,yshift=15mm] (t20) at (2/8,0) {${t_{2,0}}$};
				\draw[<-] ([yshift=6/12mm]2/8,0) -- (t20);
				\node[anchor=base,yshift=15mm] (t21) at (3/8,0) {${t_{2,1}}$};
				\draw[<-] ([yshift=6/12mm]3/8,0) -- (t21);
				\node[anchor=base,yshift=15mm] (t22) at (6/8,0) {${t_{2,2}}$};
				\draw[<-] ([yshift=6/12mm]6/8,0) -- (t22);
				\node[anchor=base,yshift=15mm] (t23) at (7/8,0) {${t_{2,3}}$};
				\draw[<-] ([yshift=6/12mm]7/8,0) -- (t23);
				\node[anchor=base,yshift=10mm] (t305) at (10.5/32,0) {${t_{3,0}}\;t_{3,1}$};
				\node[anchor=base,yshift=5mm] (n1) at (10.5/32,0) {$...$};
				\draw[<-] ([yshift=6/12mm]10/32,0) -- ([shift={(-0.5/32,0)}]t305.south);
				\node[anchor=base,yshift=5mm] (n1) at (11.5/32,0) {$...$};
				\draw[<-] ([yshift=6/12mm]11/32,0) -- ([shift={(0.5/32,0)}]t305.south);
				\node[anchor=base,yshift=10mm] (t325) at (14.5/32,0) {${t_{3,2}}\;t_{3,3}$};
				\node[anchor=base,yshift=5mm] (n1) at (14.5/32,0) {$...$};
				\draw[<-] ([yshift=6/12mm]14/32,0) -- ([shift={(-0.5/32,0)}]t325.south);
				\node[anchor=base,yshift=5mm] (n1) at (15.5/32,0) {$...$};
				\draw[<-] ([yshift=6/12mm]15/32,0) -- ([shift={(0.5/32,0)}]t325.south);
				\node[anchor=base,yshift=10mm] (t345) at (26.5/32,0) {${t_{3,4}}\;t_{3,5}$};
				\node[anchor=base,yshift=5mm] (n1) at (26.5/32,0) {$...$};
				\draw[<-] ([yshift=6/12mm]26/32,0) -- ([shift={(-0.5/32,0)}]t345.south);
				\node[anchor=base,yshift=5mm] (n1) at (27.5/32,0) {$...$};
				\draw[<-] ([yshift=6/12mm]27/32,0) -- ([shift={(0.5/32,0)}]t345.south);
				\node[anchor=base,yshift=10mm] (t365) at (30.5/32,0) {${t_{3,6}}\;t_{3,7}$};
				\node[anchor=base,yshift=5mm] (n1) at (30.5/32,0) {$...$};
				\draw[<-] ([yshift=6/12mm]30/32,0) -- ([shift={(-0.5/32,0)}]t365.south);
				\node[anchor=base,yshift=5mm] (n1) at (31.5/32,0) {$...$};
				\draw[<-] ([yshift=6/12mm]31/32,0) -- ([shift={(0.5/32,0)}]t365.south);
				\foreach \a in {0,1}{
					\fill[] (\a/2,0) circle (1/12pt);
					\foreach \b in {0,1}{
						\fill[] (\a/2+2/8+\b/8,0) circle (1/12pt);
						\foreach \c in {0,1}{
							\fill[] (\a/2+2/8+\b/8+2/32+\c/32,0) circle (1/12pt);
							\foreach \d in {0,1}{
								\fill[] (\a/2+2/8+\b/8+2/32+\c/32+2/128+\d/128,0) circle (1/12pt);
								\foreach \e in {0,1}{
								}
							}
						}
					}
				}
				\node[anchor=base,yshift=-20pt] (n1) at (0,0) {$f_0$};
				\draw[thick,decorate,decoration = {
					calligraphic brace,
					raise=5pt,
					amplitude=5pt,
					aspect=0.5
				}] (1/4,0) -- (0,0) node[anchor=base,pos=0.5,yshift=-20pt,black,font=\tiny]{$\circ y_{-\tau_1}^{(i_1;L_1)}$};
				\draw[thick,decorate,decoration = {
					calligraphic brace,
					raise=5pt,
					amplitude=5pt,
					aspect=0.5
				}] (3/4,0) -- (2/4,0) node[anchor=base,pos=0.5,yshift=-20pt,black,font=\tiny]{$\circ y_{-\tau_1}^{(i_1;L_1)}$};
				\draw[thick,decorate,decoration = {
					calligraphic brace,
					raise=5pt,
					amplitude=5pt,
					aspect=0.5
				}] (5/16,0) -- (4/16,0) node[anchor=base,pos=0.5,yshift=-20pt,black,font=\tiny]{$\circ y_{-\tau_2}^{(i_2;L_2)}$};
				\node[anchor=base,yshift=-20pt] (n1) at (5.5/16,0) {$...$};
				\draw[thick,decorate,decoration = {
					calligraphic brace,
					raise=5pt,
					amplitude=5pt,
					aspect=0.5
				}] (7/16,0) -- (6/16,0) node[anchor=base,pos=0.5,yshift=-20pt,black,font=\tiny]{$\circ y_{-\tau_2}^{(i_2;L_2)}$};
				\node[anchor=base,yshift=-20pt] (n1) at (7.5/16,0) {$...$};
				\draw[thick,decorate,decoration = {
					calligraphic brace,
					raise=5pt,
					amplitude=5pt,
					aspect=0.5
				}] (13/16,0) -- (12/16,0) node[anchor=base,pos=0.5,yshift=-20pt,black,font=\tiny]{$\circ y_{-\tau_2}^{(i_2;L_2)}$};
				\node[anchor=base,yshift=-20pt] (n1) at (13.5/16,0) {$...$};
				\draw[thick,decorate,decoration = {
					calligraphic brace,
					raise=5pt,
					amplitude=5pt,
					aspect=0.5
				}] (15/16,0) -- (14/16,0) node[anchor=base,pos=0.5,yshift=-20pt,black,font=\tiny]{$\circ y_{-\tau_2}^{(i_2;L_2)}$};
				\node[anchor=base,yshift=-20pt] (n1) at (15.5/16,0) {$...$};
				\node[anchor=base west,yshift=-20pt] (n1) at (1,0) {$=f(\cdot, 1)$};
			\end{scope}
		\end{tikzpicture}
	\end{equation*}
	
	If we include only the shears $y_{\tau_k}^{(i_k;L_k)}$ for $k\le K$, it can now be seen how Proposition \ref{shearcanc} may create two different behaviours of the trace $f(\cdot, 1)$ as $2K \to \infty$, and $2K+1 \to \infty$.
	
	The frequencies of the shears, $L_k\in\mathbb{N}$, will later be chosen to grow sufficiently quickly so that advection-diffusion \eqref{eqADE} along a viscosity subsequence $\nu_k>0$ will only include the effect of the first $k$ shears, as per Theorem \ref{vcontrol}.
	
	To this end, we first define a total order $<_\mathrm{time}$ on the indexing set $\mathcal{D}$, and then define $t_{k,m}$ so that they respect this ordering.
	
	\begin{definition}[Lexicographic dyadic ordering]\label{dyord}
		We define the following set of `dyadic' pairs
		\begin{equation*}
			\mathcal{D}=\{(k,m):k\in \mathbb{N}, m \in \mathbb{Z}, 0\le m<2^k\},
		\end{equation*}
		and define a total order $<_\mathrm{time}$ on $\mathcal{D}$ via
		\begin{equation}\label{eqdyord}
			(k_1,m_1) <_\mathrm{time} (k_2,m_2) \text{ if and only if }\begin{cases}
				m_1 2^{-k_1} < m_2 2^{-k_2}, \text{ or} \\
				m_1 2^{-k_1} = m_2 2^{-k_2} \text{ and } k_1<k_2.
			\end{cases}
		\end{equation}
		
		Define also for each $K \in \mathbb{N}$ the finite subset
		\begin{equation*}
			\mathcal{D}_K = \{(k,m) \in \mathcal{D} : k \le K\},
		\end{equation*}
		which inherits the total order $(\mathcal{D},<_\mathrm{time})$, which is now a finite order.
		
		We define for each $k \in \mathbb{N}$, and $m \in \mathbb{Z}$ with $0\le m < 2^k$,
		\begin{equation}\label{eqtkm}
			t_{k,m}  = \sum_{(k',m')<_\mathrm{time}(k,m)} 2^{-2k'} < 1,
		\end{equation}
		where the bound on $t_{k,m}$ follows from direct calculation of $\sum_{k\in\mathbb{N}}2^k2^{-2k}=1$, and an empty sum is zero.
		
		We illustrate this arrangement in the following diagram:
		\begin{equation}\label{eqdydiag}
			\begin{tikzpicture}
				\pgfdeclarelayer{background}
				\pgfsetlayers{background,main}
				\begin{scope}[scale=12]
					\draw[] (0,0) -- (1,0);
					\node[anchor=east] (t0) at (0,0) {$t=0$};
					\node[anchor=west] (t1) at (1,0) {$t=1$};
					\node[yshift=20mm] (t10) at (0,0) {${t_{1,0}}$};
					\draw[<-] ([yshift=6/12mm]0,0) -- (t10);
					\node[anchor=base,yshift=20mm] (t11) at (1/2,0) {${t_{1,1}}$};
					\draw[<-] ([yshift=6/12mm]1/2,0) -- (t11);
					\node[anchor=base,yshift=15mm] (t20) at (2/8,0) {${t_{2,0}}$};
					\draw[<-] ([yshift=6/12mm]2/8,0) -- (t20);
					\node[anchor=base,yshift=15mm] (t21) at (3/8,0) {${t_{2,1}}$};
					\draw[<-] ([yshift=6/12mm]3/8,0) -- (t21);
					\node[anchor=base,yshift=15mm] (t22) at (6/8,0) {${t_{2,2}}$};
					\draw[<-] ([yshift=6/12mm]6/8,0) -- (t22);
					\node[anchor=base,yshift=15mm] (t23) at (7/8,0) {${t_{2,3}}$};
					\draw[<-] ([yshift=6/12mm]7/8,0) -- (t23);
					\node[anchor=base,yshift=10mm] (t305) at (10.5/32,0) {${t_{3,0}}\;t_{3,1}$};
					\node[anchor=base,yshift=5mm] (n1) at (10.5/32,0) {$...$};
					\draw[<-] ([yshift=6/12mm]10/32,0) -- ([shift={(-0.5/32,0)}]t305.south);
					\node[anchor=base,yshift=5mm] (n1) at (11.5/32,0) {$...$};
					\draw[<-] ([yshift=6/12mm]11/32,0) -- ([shift={(0.5/32,0)}]t305.south);
					\node[anchor=base,yshift=10mm] (t325) at (14.5/32,0) {${t_{3,2}}\;t_{3,3}$};
					\node[anchor=base,yshift=5mm] (n1) at (14.5/32,0) {$...$};
					\draw[<-] ([yshift=6/12mm]14/32,0) -- ([shift={(-0.5/32,0)}]t325.south);
					\node[anchor=base,yshift=5mm] (n1) at (15.5/32,0) {$...$};
					\draw[<-] ([yshift=6/12mm]15/32,0) -- ([shift={(0.5/32,0)}]t325.south);
					\node[anchor=base,yshift=10mm] (t345) at (26.5/32,0) {${t_{3,4}}\;t_{3,5}$};
					\node[anchor=base,yshift=5mm] (n1) at (26.5/32,0) {$...$};
					\draw[<-] ([yshift=6/12mm]26/32,0) -- ([shift={(-0.5/32,0)}]t345.south);
					\node[anchor=base,yshift=5mm] (n1) at (27.5/32,0) {$...$};
					\draw[<-] ([yshift=6/12mm]27/32,0) -- ([shift={(0.5/32,0)}]t345.south);
					\node[anchor=base,yshift=10mm] (t365) at (30.5/32,0) {${t_{3,6}}\;t_{3,7}$};
					\node[anchor=base,yshift=5mm] (n1) at (30.5/32,0) {$...$};
					\draw[<-] ([yshift=6/12mm]30/32,0) -- ([shift={(-0.5/32,0)}]t365.south);
					\node[anchor=base,yshift=5mm] (n1) at (31.5/32,0) {$...$};
					\draw[<-] ([yshift=6/12mm]31/32,0) -- ([shift={(0.5/32,0)}]t365.south);
					\foreach \a in {0,1}{
						\fill[] (\a/2,0) circle (1/12pt);
						\foreach \b in {0,1}{
							\fill[] (\a/2+2/8+\b/8,0) circle (1/12pt);
							\foreach \c in {0,1}{
								\fill[] (\a/2+2/8+\b/8+2/32+\c/32,0) circle (1/12pt);
								\foreach \d in {0,1}{
									\fill[] (\a/2+2/8+\b/8+2/32+\c/32+2/128+\d/128,0) circle (1/12pt);
									\foreach \e in {0,1}{
									}
								}
							}
						}
					}
					\draw[thick,decorate,decoration = {
						calligraphic brace,
						raise=5pt,
						amplitude=5pt,
						aspect=0.5
					}] (1/4,0) -- (0,0) node[anchor=base,pos=0.5,yshift=-20pt,black,font=\tiny]{$2^{-2}$};
					\draw[thick,decorate,decoration = {
						calligraphic brace,
						raise=5pt,
						amplitude=5pt,
						aspect=0.5
					}] (3/4,0) -- (2/4,0) node[anchor=base,pos=0.5,yshift=-20pt,black,font=\tiny]{$2^{-2}$};
					\draw[thick,decorate,decoration = {
						calligraphic brace,
						raise=5pt,
						amplitude=5pt,
						aspect=0.5
					}] (5/16,0) -- (4/16,0) node[anchor=base,pos=0.5,yshift=-20pt,black,font=\tiny]{$2^{-4}$};
					\node[anchor=base,yshift=-20pt] (n1) at (5.5/16,0) {$...$};
					\draw[thick,decorate,decoration = {
						calligraphic brace,
						raise=5pt,
						amplitude=5pt,
						aspect=0.5
					}] (7/16,0) -- (6/16,0) node[anchor=base,pos=0.5,yshift=-20pt,black,font=\tiny]{$2^{-4}$};
					\node[anchor=base,yshift=-20pt] (n1) at (7.5/16,0) {$...$};
					\draw[thick,decorate,decoration = {
						calligraphic brace,
						raise=5pt,
						amplitude=5pt,
						aspect=0.5
					}] (13/16,0) -- (12/16,0) node[anchor=base,pos=0.5,yshift=-20pt,black,font=\tiny]{$2^{-4}$};
					\node[anchor=base,yshift=-20pt] (n1) at (13.5/16,0) {$...$};
					\draw[thick,decorate,decoration = {
						calligraphic brace,
						raise=5pt,
						amplitude=5pt,
						aspect=0.5
					}] (15/16,0) -- (14/16,0) node[anchor=base,pos=0.5,yshift=-20pt,black,font=\tiny]{$2^{-4}$};
					\node[anchor=base,yshift=-20pt] (n1) at (15.5/16,0) {$...$};
				\end{scope}
			\end{tikzpicture}
		\end{equation}
	\end{definition}
	\vspace{\baselineskip}
	
	Next, we construct the `fractal' vector fields exploiting the above.
	\begin{definition}[Fractal shear flow]\label{fracflow}
		Consider a sequence of tuples $\{(i_k, L_k, \tau_k)\}_{k \in \mathbb{N}}$ with $i_k \in \{1,2\}$, $L_k \in \mathbb{N}$, and $\tau_k > 0$.
		\begin{enumerate}[label=(\roman*)]
			\item We say $\{(i_k, L_k, \tau_k)\}_{k \in \mathbb{N}}$ satisfies the Finiteness Condition if for all $k\in\mathbb{N}$ we have
			\begin{equation}\label{eqfincond}
				\text{(Finiteness Condition)\quad} \tau_k < 2^{-2k}.
			\end{equation}
			\item We say $\{(i_k, L_k, \tau_k)\}_{k \in \mathbb{N}}$ satisfies the Cancellation Conditions if for all $k\in\mathbb{N}$ we have
			\begin{equation}\label{eqcancond}
				\text{(Cancellation Conditions)\quad}\begin{gathered}
					i_{k+1} \ne i_k, \\
					2\tau_{k+1} = \frac{1}{2L_k}, \\
					2L_{k+1}\tau_k \text{ an odd integer}.
				\end{gathered}
			\end{equation}
		\end{enumerate}
		
		Following the notation of Definition \ref{dyord}, in particular \eqref{eqtkm}, when $\{(i_k, L_k, \tau_k)\}_{k \in \mathbb{N}}$ satisfy the Finiteness Condition \eqref{eqfincond}, we may define for each $K \in \mathbb{N}$ the following fractal shear flows on $\mathbb{T}^2 \times [0, 1] \to \mathbb{R}^2$,
		\begin{gather*}
			u_K^{\{(i_k, L_k, \tau_k)\}_{k=1}^K}(x,t) = \begin{cases} u^{(i_k;L_k)}(x) & \text{if $t \in [t_{k,m}, t_{k,m} + \tau_k]$ for some $(k,m) \in \mathcal{D}_K$}, \\ 0 & \text{otherwise}, \end{cases} \\
			u_\infty^{\{(i_k, L_k, \tau_k)\}_{k \in \mathbb{N}}}(x,t) = \begin{cases} u^{(i_k;L_k)}(x) & \text{if $t \in [t_{k,m}, t_{k,m} + \tau_k]$ for some $(k,m) \in \mathcal{D}$}, \\ 0 & \text{otherwise}. \end{cases}
		\end{gather*}
		
		By \eqref{eqtkm}, this is well-defined since the Finiteness Condition \eqref{eqfincond} implies that the intervals $[t_{k,m}, t_{k,m} + \tau_k]$ are disjoint subsets of $[0,1]$.
	\end{definition}
	\vspace{\baselineskip}
	
	Next, we show how this construction exploits Proposition \ref{shearcanc} to create non-uniqueness of renormalised solutions to \eqref{eqTE}.
	\begin{proposition}[Fractal behaviour]\label{shearsolconv}
		We follow the notation introduced in Definitions \ref{dyord}, \ref{fracflow}.
		
		Let $f_0 \in L^\infty(\mathbb{T}^2)$, and suppose we have an infinite sequence of tuples $\{(i_k,L_k,\tau_k)\}_{k\in\mathbb{N}}$ with $i_k \in \{1,2\}$, $L_k \in \mathbb{N}$, and $\tau_k > 0$, satisfying the Finiteness Condition \eqref{eqfincond}. Then for each $K \in \mathbb{N}$ there exists a unique weak solution $f^K$ to \eqref{eqTE} along $u_K^{\{(i_k,L_k,\tau_k)\}_{k=1}^K}$ with initial data $f_0$. Moreover, $f^K\in (C^0L^1) \cap (L^\infty L^\infty)$, is a Lagrangian solution, and hence also a renormalised weak solution (Definitions \ref{renormalised}, \ref{lagrangian}).
		
		If in addition $\{(i_k,L_k,\tau_k)\}_{k\in\mathbb{N}}$ satisfy the Cancellation Conditions \eqref{eqcancond} then
		\begin{gather*}
			f^{2K}\xrightarrow{K\to\infty}f^{\mathrm{even}}, \\
			f^{2K+1}\xrightarrow{K\to\infty}f^{\mathrm{odd}},
		\end{gather*}
		with the above convergence in weak-$*$ $L^\infty L^\infty$, and strong in $L^p L^\infty$ for any $p\in[1,\infty)$. Moreover, the sequences $f^{2K}\in C^0L^1$ and $f^{2K+1}\in C^0L^1$ are eventually constant as $K\to\infty$ when restricted to the spatio-temporal domain $\mathbb{T}^2\times[t_{k,m},t_{k,m}+2^{-2k}]$ for any $(k,m)\in\mathcal{D}$, or to the spatio-temporal domain $\mathbb{T}^2\times\{1\}$. Furthermore, the limit functions $f^\mathrm{even}, f^\mathrm{odd}$ are renormalised weak solutions to \eqref{eqTE} along $u_\infty^{\{(i_k,L_k,\tau_k)\}_{k\in\mathbb{N}}}$ with initial data $f_0$.
		
		If $f_0$ is not constant, then $f^\mathrm{even}\ne f^\mathrm{odd}$, and in particular,
		\begin{equation}\label{eqnonuniq}
			\begin{gathered}
				f^{\mathrm{even}}(\cdot, 1) = f_0, \\
				f^{\mathrm{odd}}(\cdot, 1) = f_0 \circ y_{-2\tau_1}^{(i_1;L_1)}.
			\end{gathered}
		\end{equation}
	\end{proposition}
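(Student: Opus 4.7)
My plan is to recognise $f^K$ as an explicit Lagrangian solution along the shear composition and then exploit the self-similar structure of $\mathcal{D}_K$ via Proposition~\ref{shearcanc}. By the Finiteness Condition~\eqref{eqfincond}, the fractal shear $u_K^{\{(i_k,L_k,\tau_k)\}_{k=1}^K}$ is piecewise a single Lagrangian shear $u^{(i_k;L_k)}$ on disjoint subintervals of $[0,1]$ separated by intervals where it vanishes; applying Proposition~\ref{shearuniq} on each shear subinterval, together with transitivity from the remark after Theorem~\ref{weakcont}, identifies the unique weak $L^1L^1$ solution $f^K$ as Lagrangian: $f^K(\cdot,t) = f_0\circ(\Phi^K_{[0,t]})^{-1}$, where $\Phi^K_{[0,t]}$ is the time-ordered composition of the relevant shear maps $y^{(i_k;L_k)}_{\tau_k}$. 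Measure-preservation gives $\|f^K\|_{L^\infty L^\infty}\le\|f_0\|_{L^\infty}$, while Remark~\ref{lagrenorm} places $f^K\in(C^0L^1)\cap(L^\infty L^\infty)$ as a renormalised weak solution.

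The algebraic heart is an induction on $K$ showing $\Phi_K:=\Phi^K_{[0,1]}$ equals $\mathrm{Id}$ for $K$ even and $y^{(i_1;L_1)}_{2\tau_1}$ for $K$ odd. The ordering on $\mathcal{D}_K$ places $(1,0)$ first, then all indices with $k\ge 2$ and $m/2^k\in[0,1/2)$, then $(1,1)$, then all indices with $k\ge 2$ and $m/2^k\in[1/2,1)$, yielding the factorisation $\Phi_K = F^K\circ y^{(i_1;L_1)}_{\tau_1}\circ E^K\circ y^{(i_1;L_1)}_{\tau_1}$. The level-shift bijection $(k,m)\mapsto(k-1,m)$ identifies each of $E^K, F^K$ with the full composition $\Phi_{K-1}$ in a system with parameters $(i_{k+1},L_{k+1},\tau_{k+1})_{k\ge 1}$, which inherits~\eqref{eqcancond}; by the inductive hypothesis, for $K$ odd $E^K = F^K = \mathrm{Id}$ giving $\Phi_K = (y^{(i_1;L_1)}_{\tau_1})^2 = y^{(i_1;L_1)}_{2\tau_1}$, while for $K$ even $E^K = F^K = (y^{(i_2;L_2)}_{\tau_2})^2$ and Proposition~\ref{shearcanc} collapses $\Phi_K$ to $\mathrm{Id}$, immediately yielding~\eqref{eqnonuniq}.

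For the eventually-constant property on each slot $[t_{k,m}, t_{k,m}+2^{-2k}]$, by~\eqref{eqfincond} for $K\ge k$ the only shear of $u_K$ acting inside this slot is $(k,m)$ itself, so $f^K$ on the slot is determined by $f^K(\cdot,t_{k,m})$ composed with the partial flow of $y^{(i_k;L_k)}$, i.e., by the truncated composition $\Phi^K_{[0,t_{k,m}]}$. An analogous inductive decomposition of this truncated flow, applied iteratively along the dyadic ancestors of $(k,m)$, shows that $\Phi^K_{[0,t_{k,m}]}$ stabilises as $K\to\infty$ within each parity class. The slots tile $[0,1]$ up to a measure zero set (total measure $\sum_{k\ge 1} 2^k\cdot 2^{-2k} = 1$), so combined with the uniform bound $\|f^K\|_{L^\infty L^\infty}\le\|f_0\|_{L^\infty}$, dominated convergence in time yields $f^{2K}\to f^{\mathrm{even}}$ and $f^{2K+1}\to f^{\mathrm{odd}}$ strongly in $L^pL^\infty$ and in weak-$*$ $L^\infty L^\infty$.

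Finally, $u_K\to u_\infty^{\{(i_k,L_k,\tau_k)\}_{k\in\mathbb{N}}}$ strongly in $L^1L^\infty$, as they differ only on level-$>K$ shear intervals of total measure $\le 2^{-K}$; passing to the limit in the weak formulation of~\eqref{eqTE} along $u_K$ using strong $L^1L^1$ convergence of $u_K f^K\to u_\infty f^{\mathrm{even}}$ shows $f^{\mathrm{even}}$ is a weak solution along $u_\infty$ with initial data $f_0$, and the same passage applied to each $\beta(f^K)$ (itself Lagrangian along $u_K$ with initial data $\beta(f_0)$) gives renormalisation. The endpoint formulas from the induction give $f^{\mathrm{even}}(\cdot,1) = f_0$ and $f^{\mathrm{odd}}(\cdot,1) = f_0\circ y^{(i_1;L_1)}_{-2\tau_1}$, differing for non-constant $f_0$. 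The main obstacle I anticipate is rigorously setting up the inductive decomposition for the truncated flow $\Phi^K_{[0,t_{k,m}]}$, where the bracketing by level-$1$ shears is no longer symmetric and one must recurse along the appropriate dyadic ancestry of $(k,m)$ while keeping careful track of which parity of $K$ corresponds to which stabilised composition.
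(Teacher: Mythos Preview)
Your approach is essentially the paper's: identify $f^K$ as Lagrangian via piecewise application of Proposition~\ref{shearuniq}, then use the self-similar structure of $(\mathcal{D}_K,<_{\mathrm{time}})$ together with Proposition~\ref{shearcanc} to show the flow composition stabilises in each parity class. Your factorisation $\Phi_K = F^K\circ y^{(i_1;L_1)}_{\tau_1}\circ E^K\circ y^{(i_1;L_1)}_{\tau_1}$ with the level-shift identification is exactly the recursion the paper writes as $Y_{k,m}^K = Y_{k+1,2m+2}^K\circ y_{\tau_k}^{(i_k;L_k)}\circ Y_{k+1,2m}^K\circ y_{\tau_k}^{(i_k;L_k)}$, and the paper's claim~\eqref{eqparity} that $\tilde{y}_t^{K+2}=\tilde{y}_t^K$ on each slot $[t_{k,m},t_{k,m}+2^{-2k}]$ is precisely the stabilisation of your truncated flow $\Phi^K_{[0,t_{k,m}]}$ within parity classes. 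So the architecture is right and matches the paper.

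There is one genuine gap. You conclude ``differing for non-constant $f_0$'' directly from the endpoint formulas~\eqref{eqnonuniq}, but $f_0 = f_0\circ y_{-2\tau_1}^{(i_1;L_1)}$ does \emph{not} force $f_0$ to be constant: any $f_0$ depending only on $x_{\hat{i_1}}$ is invariant under this single shear. The paper's argument is more subtle: assuming $f^{\mathrm{even}}=f^{\mathrm{odd}}=:f$, one uses the analogue of~\eqref{eqnonuniq} at \emph{every} level $k$ (applied on the rightmost block $[t_{k,2^k-2},1]$) to deduce first $f(\cdot,1)=f_0$, then $f(\cdot,1-2^{2-2k})=f_0$ for all $k$, and finally $f_0 = f_0\circ y_{2\tau_k}^{(i_k;L_k)}$ for all $k$. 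Since $i_k$ alternates between $1$ and $2$ and $\tau_k\to 0$ by~\eqref{eqfincond}--\eqref{eqcancond}, this forces invariance under arbitrarily small translations in both coordinate directions, hence constancy. Your induction already produces the required level-$k$ analogue of~\eqref{eqnonuniq} (it is your statement about $\Phi_{K-1}$ for the shifted system), so you have the ingredients; you just need to invoke them at all levels rather than only at $k=1$.
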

	\begin{proof}
		Recall the language and notation introduced in Definitions \ref{dyord}, \ref{fracflow}, as well as Definitions \ref{renormalised}, \ref{lagrangian} of renormalised and Lagrangian solutions to \eqref{eqTE}.
		
		By Theorems \ref{weakcont}, \ref{TEexistence}, there exists some weak solution $f^K \in C_{\mathrm{weak-}*}^0L^\infty$ to \eqref{eqTE} along $u_K^{\{(i_k,L_k,\tau_k)\}_{k=1}^K}$ with initial data $f_0$.
		
		We aim to show $f \in C^0 L^1$ with
		\begin{equation}\label{eqlagpiece}
			f^K(\cdot, t) = f_0 \circ \big(\tilde{y}_t^K\big)^{-1},
		\end{equation}
		for $\big\{\tilde{y}_t^K\big\}_{t\in[0,1]}$ a Lagrangian flow along $u_K^{\{(i_k,L_k,\tau_k)\}_{k=1}^K}$ which does not depend on $f_0$, thus proving uniqueness.
		
		From the definition of $u_K^{\{(i_k,L_k,\tau_k)\}_{k=1}^K}$, finiteness of the set $\mathcal{D}_K$, and disjointness of the intervals $[t_{k,m}, t_{k,m}+\tau_k]$ for all $(k,m) \in \mathcal{D}_K$, we may piecewise apply Proposition \ref{shearuniq} to $f^K$. We use that $f^K \in C_{\mathrm{weak-}*}^0L^\infty$ to glue together the pieces, and that any weak solution to \eqref{eqTE} along $u\equiv 0$ on an open time interval $I$ is a constant function of $t\in I$, see for example Theorem 3.1.4' in \cite{hormander2003}. This constructs for each $K \in \mathbb{N}$ a Lagrangian flow $\tilde{y}_t^K$ satisfying \eqref{eqlagpiece}, and moreover gives an expression for $\tilde{y}_t^K$ in terms of the Lagrangian shear flows $y_t^{(i_k;L_k)}$, see \eqref{eqlagint} below.
		
		For any $(k,m) \in \mathcal{D}_K$, define
		\begin{equation*}
			t_{\mathrm{suc}_K(k,m)} = \begin{cases}
				1 & \text{if $(k,m)$ is maximal in $(\mathcal{D}_K,<_\mathrm{time})$, else} \\
				t_{\mathcal{S}} & \text{for $\mathcal{S}\in\mathcal{D}_K$ the successor of $(k,m)$ in $(\mathcal{D}_K,<_\mathrm{time})$}.
			\end{cases}
		\end{equation*}
		
		Note from \eqref{eqtkm}, and the Finiteness Condition \eqref{eqfincond}, that
		\begin{equation}\label{eqsucbound2}
			\tau_k < 2^{-2k} \le t_{\mathrm{suc}_K(k,m)} - t_{k,m}.
		\end{equation}
		
		Therefore, for each $t \in [t_{k,m},t_{\mathrm{suc}_K(k,m)}]$,
		\begin{equation}\label{eqlagint}
			\tilde{y}_t^K \circ\big(\tilde{y}_{t_{k,m}}^K\big)^{-1} = \begin{cases}
				y_{t-t_{k,m}}^{(i_k;L_k)} & \text{if } t \in [t_{k,m},t_{k,m}+\tau_k], \\
				y_{\tau_k}^{(i_k;L_k)} & \text{if } t \in [t_{k,m}+\tau_k,t_{\mathrm{suc}_K(k,m)}].
			\end{cases}
		\end{equation}
		
		This expression defines $\tilde{y}_t^K$ for all $t \in [0,1]$, since $t_{1,0} = 0$ and $\tilde{y}_0^K = \mathrm{Id}$. This completes the proof of uniqueness.
		
		Next, by considering \eqref{eqdyord}, \eqref{eqtkm}, we may show that (see the illustration \eqref{eqdydiag})
		\begin{equation}\label{eqsuc}
			t_{\mathrm{suc}_K(k,m)} = \begin{cases}
				t_{k,m}+2^{-2k} & \text{if } k < K, \\
				t_{k,m} + 2^{1-2k} & \text{if } k=K. \\
			\end{cases}
		\end{equation}
		
		Moreover, for all $(k,m) \in \mathcal{D}$,
		\begin{equation}\label{eqfracstruc}
			\begin{aligned}
				t_{k,m}+2^{-2k} & = t_{k+1,2m}, \\
				t_{k,m}+2^{1-2k} & = t_{k,m+1}\quad \text{when $m$ even}.
			\end{aligned}
		\end{equation}
		
		Assume that additionally $\{(i_k,L_k,\tau_k)\}_{k\in\mathbb{N}}$ satisfy the Cancellation Conditions \eqref{eqcancond}.
		
		Fix some $K \in \mathbb{N}$. We claim that, for each $(k,m) \in \mathcal{D}_K$, and for all $\left.t \in [t_{k,m},t_{k,m}+2^{-2k}]\right.$, one has as maps $\mathbb{T}^2\to\mathbb{T}^2$,
		\begin{equation}\label{eqparity}
			\tilde{y}_t^{K+2} = \tilde{y}_t^K.
		\end{equation}
		
		By \eqref{eqsucbound2}, \eqref{eqlagint}, it is sufficient to show, for all $(k,m)\in\mathcal{D}_K$, that as maps $\mathbb{T}^2\to\mathbb{T}^2$,
		\begin{equation*}
			\tilde{y}_{t_{\mathrm{suc}_K(k,m)}}^{K+2} \circ \big(\tilde{y}_{t_{\mathrm{suc}_{K+2}(k,m)}}^{K+2}\big)^{-1} = \tilde{y}_{t_{\mathrm{suc}_K(k,m)}}^K \circ \big(\tilde{y}_{t_{\mathrm{suc}_{K+2}(k,m)}}^K\big)^{-1}.
		\end{equation*}
		
		By \eqref{eqsuc}, for all $k<K$ this is immediate. Meanwhile, by \eqref{eqsuc}, \eqref{eqfracstruc}, for $k=K$, we may rewrite this, and now need to prove that for all $m \in \mathbb{Z}$ with $0\le m<2^K$,
		\begin{equation*}
			\tilde{y}_{t_{K+1,2m}+2^{-2K}}^{K+2} \circ \big(\tilde{y}_{t_{K+1,2m}}^{K+2}\big)^{-1} = \tilde{y}_{t_{K+1,2m}+2^{-2K}}^{K} \circ \big(\tilde{y}_{t_{K+1,2m}}^{K}\big)^{-1}.
		\end{equation*}
		
		In particular, it is certainly sufficient to show, for any $(k,m) \in \mathcal{D}_{K+1}$ with $m$ even, that as maps $\mathbb{T}^2\to\mathbb{T}^2$,
		\begin{equation}\label{eqshearcanc}
			\tilde{y}_{t_{k,m}+2^{2-2k}}^{K} \circ \big(\tilde{y}_{t_{k,m}}^{K}\big)^{-1} \begin{aligned}[t]
				& = \begin{cases}
					y_{2\tau_k}^{(i_k;L_k)} & \text{if } k = K\; (\mathrm{mod}\;2), \\
					\mathrm{Id} & \text{otherwise}.
				\end{cases} \\
				& = : Y_{k,m}^K,
			\end{aligned}
		\end{equation}
		where we have defined the shorthand $Y_{k,m}^K:\mathbb{T}^2\to\mathbb{T}^2$.
		
		Fixing an even $m\in\mathbb{Z}$ with $0\le m < 2^{K+1}$, we prove this by induction. For $k=K+1$, by \eqref{eqsucbound2}, \eqref{eqlagint} we see that as maps $\mathbb{T}^2\to\mathbb{T}^2$,
		\begin{equation*}
			\tilde{y}_{t_{\mathrm{suc}_K(k,m)}}^K \circ \big(\tilde{y}_{t_{\mathrm{suc}_{K+2}(k,m)}}^K\big)^{-1} = \mathrm{Id}.
		\end{equation*}
		After rewriting this using \eqref{eqsuc}, \eqref{eqfracstruc}, this proves \eqref{eqshearcanc} for $k=K+1$.
		
		Now let $k\le K$. By \eqref{eqfracstruc}, and since by assumption $m$ is even, we may rewrite (it may be helpful to consider the illustration \eqref{eqdydiag})
		\begin{equation*}
			Y_{k,m}^K = \begin{aligned}[t]
				& \left(\tilde{y}_{t_{k+1,2m+2}+2^{-2k}}^{K} \circ \big(\tilde{y}_{t_{k+1,2m+2}}^{K}\big)^{-1}\right) \\
				& \circ \left(\tilde{y}_{t_{k,m+1}+2^{-2k}}^{K} \circ \big(\tilde{y}_{t_{k,m+1}}^{K}\big)^{-1}\right) \\
				& \circ \left(\tilde{y}_{t_{k+1,2m}+2^{-2k}}^{K} \circ \big(\tilde{y}_{t_{k+1,2m}}^{K}\big)^{-1}\right) \\
				& \circ \left(\tilde{y}_{t_{k,m}+2^{-2k}}^{K} \circ \big(\tilde{y}_{t_{k,m}}^{K}\big)^{-1}\right).
			\end{aligned}
		\end{equation*}
		
		Then by the definition of $Y_{k+1,m'}^K$, \eqref{eqsucbound2}, and \eqref{eqlagint}, the right hand side may be rewritten again as
		\begin{equation*}
			Y_{k,m}^K = Y_{k+1,2m+2}^K \circ y_{\tau_k}^{(i_k;L_k)} \circ Y_{k+1,2m}^K \circ y_{\tau_k}^{(i_k;L_k)}.
		\end{equation*}
		
		By Proposition \ref{shearcanc}, the result \eqref{eqshearcanc} for $k$ now follows from the same result for $k+1$. This completes the induction, and thus the proof of \eqref{eqshearcanc}, and so also the proof of \eqref{eqparity}.
		
		Next, note by \eqref{eqtkm}, that
		\begin{align*}
			E & = \left(\bigcup_{(k,m)\in\mathcal{D}} [t_{k,m},t_{k,m}+2^{-2k}]\right) \cup \left\{t_{k,m}+2^{2-2k}:(k,m)\in\mathcal{D} \text{ with $m$ even}\right\} \\
			& \subset [0,1],
		\end{align*}
		has Lebesgue-measure 1. Moreover, by \eqref{eqparity}, \eqref{eqshearcanc}, the sequences $f^{2K}$, $f^{2K+1}$ are eventually constant on $\mathbb{T}^2 \times [t_{k,m},t_{k,m}+2^{-2k}]$, and each $\mathbb{T}^2 \times \{t_{k,m}+2^{2-2k}\}$ with $m$ even.
		
		Therefore, for all $t\in E$, and so for a.e. $t \in [0,1]$, we see that
		\begin{equation}\label{eqpointconv}
			\begin{gathered}
				f^{2K}(\cdot, t) \xrightarrow{K\to\infty} f_0 \circ \big(\tilde{y}^\mathrm{even}_t\big)^{-1}, \\
				f^{2K+1}(\cdot, t) \xrightarrow{K\to\infty} f_0 \circ \big(\tilde{y}^\mathrm{odd}_t\big)^{-1},
			\end{gathered}
		\end{equation}
		for some Lebesgue-measure preserving $\{\tilde{y}^\mathrm{even}_t\}_{t\in E}$, $\{\tilde{y}^\mathrm{odd}_t\}_{t\in E}$ independent of $f_0$, with the convergence strong in $L^\infty(\mathbb{T}^2)$.
		
		Moreover, by the dominated convergence Theorem, we see that \eqref{eqpointconv} converges strongly in $L^p(E;L^\infty(\mathbb{T}^2))$ for all $p\in[1,\infty)$, and hence also in weak-$*$ $L^\infty(E;L^\infty(\mathbb{T}^2))$.
		
		By Definition \ref{fracflow}, for all $t\in[0,1]$ (in particular $t\in E$), $u_K^{\{(i_k, L_k, \tau_k)\}_{k=1}^K}(\cdot, t)$ is bounded by 1 in $L^\infty(\mathbb{T}^2;\mathbb{R}^2)$, and as $K\to\infty$ the sequence is eventually equal to $u_\infty^{\{(i_k, L_k, \tau_k)\}_{k \in \mathbb{N}}}(\cdot,t)$. Therefore it too converges strongly in $L^p(E;L^\infty(\mathbb{T}^2;\mathbb{R}^2))$ for all $p\in[1,\infty)$.
		
		Fix $t \in E$, and apply the Trace Formula \eqref{eqtrace} to $f^{2K}$, $f^{2K+1}$. Taking now the limit $K\to\infty$ then gives that for all $t\in E$, for any $\phi\in C^\infty(\mathbb{T}^d\times[0,T])$,
		\begin{gather*}
			\begin{aligned}
				& \int_{\mathbb{T}^2} f_0 \circ \big(\tilde{y}^\mathrm{even}_t\big)^{-1} \phi(\cdot, t) \;dx \\
				& = \int_{\mathbb{T}^2} f_0 \phi_0 \; dx + \int_{\mathbb{T}^2\times\left(E\cap[0,t]\right)} f_0 \circ \big(\tilde{y}^\mathrm{even}_t\big)^{-1}\left(\frac{\partial\phi}{\partial t}+u_\infty^{\{(i_k, L_k, \tau_k)\}_{k \in \mathbb{N}}}\cdot\nabla\phi +\nu\Delta \phi\right) dxdt,
			\end{aligned} \\
			\begin{aligned}
				& \int_{\mathbb{T}^2} f_0 \circ \big(\tilde{y}^\mathrm{odd}_t\big)^{-1} \phi(\cdot, t) \;dx \\
				& = \int_{\mathbb{T}^2} f_0 \phi_0 \; dx + \int_{\mathbb{T}^2\times\left(E\cap[0,t]\right)} f_0 \circ \big(\tilde{y}^\mathrm{odd}_t\big)^{-1}\left(\frac{\partial\phi}{\partial t}+u_\infty^{\{(i_k, L_k, \tau_k)\}_{k \in \mathbb{N}}}\cdot\nabla\phi +\nu\Delta \phi\right) dxdt,
			\end{aligned}
		\end{gather*}
		where $\phi_0=\phi(\cdot,0)$.
		
		This implies that $f_0 \circ \big(\tilde{y}^\mathrm{even}_t\big)^{-1}, f_0 \circ \big(\tilde{y}^\mathrm{odd}_t\big)^{-1} \in C^0_{\mathrm{weak-}*}(E;L^\infty(\mathbb{T}^2))$ and so may be extended to $f^\mathrm{even}, f^\mathrm{odd}\in C^0_{\mathrm{weak-}*}([0,1];L^\infty)$, as argued in Theorem \ref{weakcont}.
		
		For any $\phi \in C_c^\infty(\mathbb{T}^2\times[0,1))$ let $t=1$ in the above (noting that $1=t_{1,0}+2^{2-2}\in E$). This proves that $f^\mathrm{even}$, $f^\mathrm{odd}$ are both weak solutions to \eqref{eqTE} along $u_\infty^{\{(i_k, L_k, \tau_k)\}_{k \in \mathbb{N}}}$ with initial data $f_0$.
		
		Moreover, for any $\beta \in C_b^0(\mathbb{R})$, we may rewrite $\beta\big(f_0\circ\big(\tilde{y}^\mathrm{even}_t\big)^{-1}\big) = \beta(f_0)\circ\big(\tilde{y}^\mathrm{even}_t\big)^{-1}$. By repeating the above arguments we have also that $\beta(f_0)\circ\big(\tilde{y}^\mathrm{even}_t\big)^{-1}$ (can be extended to) a weak solution to \eqref{eqTE} along $u_\infty^{\{(i_k, L_k, \tau_k)\}_{k \in \mathbb{N}}}$ with initial data $\beta(f_0)$. Therefore, we see that $f^\mathrm{even}$ is a renormalised weak solution to \eqref{eqTE} along $u_\infty^{\{(i_k, L_k, \tau_k)\}_{k \in \mathbb{N}}}$ with initial data $f_0$. Similarly for $f^\mathrm{odd}$.
		
		Next, observe that by \eqref{eqshearcanc}, \eqref{eqpointconv}, for all $(k,m)\in\mathcal{D}$ with $m$ even, that
		\begin{equation}\label{eqparitystruc}
			\begin{gathered}
				f^\mathrm{even}(\cdot,t_{k,m}+2^{2-2k}) = f^\mathrm{even}(\cdot,t_{k,m}) \circ \begin{cases}
					\big(y_{2\tau_k}^{(i_k;L_k)}\big)^{-1} & \text{for even } k, \\
					\mathrm{Id} & \text{for odd } k.
				\end{cases} \\
				f^\mathrm{odd}(\cdot,t_{k,m}+2^{2-2k}) = f^\mathrm{odd}(\cdot,t_{k,m}) \circ \begin{cases}
					\mathrm{Id} & \text{for even } k, \\
					\big(y_{2\tau_k}^{(i_k;L_k)}\big)^{-1} & \text{for odd } k.
				\end{cases}
			\end{gathered}
		\end{equation}
		
		In particular, since $t_{1,0}=0$, we have proved \eqref{eqnonuniq}.
		
		It remains to show, when $f_0$ is not constant, that from \eqref{eqparitystruc} we deduce $f^\mathrm{even} \ne f^\mathrm{odd}$. Assume to the contrary that they are equal and call this solution $f$. Then by \eqref{eqparitystruc} (noting that by \eqref{eqdyord}, \eqref{eqtkm}, $1-2^{2-2k} = t_{k,2^k-2}$, see for example the illustration \eqref{eqdydiag}), we have for all $k\in\mathbb{N}$, that
		\begin{gather*}
			f(\cdot, 1) = f(\cdot,1-2^{2-2k}), \\
			f(\cdot, 1) = f(\cdot,1-2^{2-2k}) \circ \big(y_{2\tau_k}^{(i_k;L_k)}\big)^{-1}.
		\end{gather*}

		In particular, taking $k=1$, we see $f(\cdot,1)=f_0$. Substituting this back in gives $f_0 = f(\cdot,1-2^{2-2k})$ for all $k\in\mathbb{N}$. Again, substituting this back in gives that for all $k\in\mathbb{N}$,
		\begin{equation*}
			f_0 = f_0 \circ y_{2\tau_k}^{(i_k;L_k)}.
		\end{equation*}
		
		In terms of the unit vectors $e_1$, $e_2$, we have that for a.e. $x\in\mathbb{T}^2$, $f_0(x) = f_0 (x+2\tau_k e_{i_k})$. Convolving with a smooth function $\rho \in C_c^\infty(\mathbb{R}^2)$ then gives that for all $x \in \mathbb{T}^2$,
		\begin{equation}\label{eqconst}
			(\rho * f_0)(x) = (\rho * f_0)(x+2\tau_k e_{i_k}).
		\end{equation}
		
		By the Finiteness and Cancellation Conditions \eqref{eqfincond}, \eqref{eqcancond}, we see that $\tau_k \xrightarrow{k\to\infty} 0$, and $i_{k+1}\ne i_k$ (so equal to both $1$ and $2$ infinitely often). Now $(\rho * f_0) \in C^\infty(\mathbb{T}^2)$, and so by \eqref{eqconst} we must have that $\rho*f_0$ is a constant. This holds for all $\rho \in C_c^\infty(\mathbb{R}^2)$, and therefore implies also that $f_0$ is a constant, reaching the required contradiction.
	\end{proof}
	\vspace{\baselineskip}
	
	Finally, for a suitably fast growing sequence $\{L_k\}_{k\in\mathbb{N}}\subset\mathbb{N}$, we apply Theorem \ref{vcontrol} to the sequence of vector fields $u_n^{\{(i_k,L_k,\tau_k)\}_{k=1}^n}(x,t)$. This allows us to control the vanishing viscosity limit along the vector field $u_\infty^{\{(i_k,L_k,\tau_k)\}_{k\in\mathbb{N}}}$.
	\begin{theorem}[Non-unique renormalised vanishing viscosity limit solutions]\label{vvtheorem}
		There exists a divergence-free vector field $u \in L^\infty([0,1];L^\infty(\mathbb{T}^2;\mathbb{R}^2))$, and a sequence $\{\nu_n\}_{n\in\mathbb{N}}$ with $\nu_n > 0$ and $\nu_n\xrightarrow{n\to\infty}0$, such that for any initial data $f_0\in  L^\infty(\mathbb{T}^2)$, and for $f^\nu$ the unique solution to \eqref{eqADE} along $u$ with initial data $f_0$, one has
		\begin{gather*}
			f^{\nu_{2n}}\xrightarrow{n\to\infty}f^\mathrm{even}, \\
			f^{\nu_{2n+1}}\xrightarrow{n\to\infty}f^\mathrm{odd},
		\end{gather*}
		with the above convergence in weak-$*$ $L^\infty([0,1];L^\infty(\mathbb{T}^2))$, and strong in $L^p([0,1];L^p(\mathbb{T}^2))$ for all $p \in [1,\infty)$. Furthermore, the limit functions $f^\mathrm{even}, f^\mathrm{odd}$ are renormalised weak solutions to \eqref{eqTE} along $u$ with initial data $f_0$.
		
		If $f_0$ is not constant, then $f^\mathrm{even}\ne f^\mathrm{odd}$, and moreover the set of weak-$*$ limit points of $f^\nu\in L^\infty([0,1];L^\infty(\mathbb{T}^2))$ as $\nu\to0$ is uncountable.
	\end{theorem}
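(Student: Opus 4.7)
The plan is to apply the framework of Theorem \ref{vcontrol} to the sequence of fractal shear flows $u_n = u_n^{\{(i_k,L_k,\tau_k)\}_{k=1}^n}$ from Definition \ref{fracflow}, where the tuples $(i_k,L_k,\tau_k)_{k\in\mathbb{N}}$ are chosen to simultaneously satisfy the Finiteness Condition \eqref{eqfincond}, the Cancellation Conditions \eqref{eqcancond}, and the closeness hypothesis $d_*(u_{n+1},u_n)\le\epsilon_n$ required by property \ref{list3} of Theorem \ref{vcontrol}. The limit vector field will be $u = u_\infty^{\{(i_k,L_k,\tau_k)\}_{k\in\mathbb{N}}}$, and the two limits $f^\mathrm{even}, f^\mathrm{odd}$ will be those produced by Proposition \ref{shearsolconv}.

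I would build the sequence $(i_k,L_k,\tau_k)$ inductively. Given the tuples up to index $n$, property \ref{list1} of Theorem \ref{vcontrol} furnishes $\nu_n, \epsilon_n > 0$. The Cancellation Conditions force $i_{n+1}\ne i_n$ and $\tau_{n+1} = 1/(4L_n)$, and require $L_{n+1}$ to be a positive integer of the form $(2N+1)/(2\tau_n)$ for some integer $N\ge 0$; arbitrarily large such integers exist. Since $u_{n+1}-u_n$ coincides with $u^{(i_{n+1};L_{n+1})}$ on the disjoint time intervals $\{[t_{n+1,m},t_{n+1,m}+\tau_{n+1}]\}_{m=0}^{2^{n+1}-1}$ and vanishes elsewhere, the weak-$*$ convergence \eqref{eqshearweakconv} shows that $u_{n+1}-u_n \xrightharpoonup{L_{n+1}\to\infty} 0$ in $L^\infty L^\infty$. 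Hence $L_{n+1}$ can be chosen large enough to simultaneously arrange $d_*(u_{n+1},u_n)\le\epsilon_n$ and the Finiteness Condition $\tau_{n+2} = 1/(4L_{n+1}) < 2^{-2(n+2)}$ at the next step. That each $u_n$ lies in the set $Y$ of Theorem \ref{vcontrol} is exactly the content of Proposition \ref{shearsolconv} applied at truncation level $n$.

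With the construction completed, properties \ref{list3}--\ref{list5} of Theorem \ref{vcontrol} yield $u_n \xrightharpoonup{} u$ weak-$*$ in $L^\infty L^\infty$, and $\|f^{\infty,\nu_n}-f^n\|_{L^\infty L^p}\to 0$ for every $p\in[1,\infty)$ and every $f_0\in L^\infty(\mathbb{T}^2)$, where $f^{\infty,\nu}$ and $f^n$ denote the unique solutions to \eqref{eqADE} along $u$ and to \eqref{eqTE} along $u_n$ respectively. Splitting into even and odd subsequences and applying Proposition \ref{shearsolconv} gives $f^{2n}\to f^\mathrm{even}$ and $f^{2n+1}\to f^\mathrm{odd}$, strongly in $L^p L^\infty$ and in weak-$*$ $L^\infty L^\infty$. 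A triangle inequality then yields both convergence claims of the theorem, and Proposition \ref{shearsolconv} also supplies the renormalised property of each limit and their distinctness when $f_0$ is non-constant.

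The more delicate part is the uncountability of the set $\Omega$ of weak-$*$ $L^\infty L^\infty$ cluster points of $\{f^{\infty,\nu}\}_{\nu>0}$ at $\nu=0$. The plan is a topological argument. First, $\nu\mapsto f^{\infty,\nu}$ is continuous from $(0,\nu_1]$ into $L^\infty L^p$; this follows directly from the sequential argument inside the proof of Proposition \ref{ADEtoADE} specialised to the constant sequence $u_n\equiv u$ with $\nu_i\to\nu$. By the Initial $L^p$-Inequality \eqref{eqinitineq}, the family $\{f^{\infty,\nu}\}$ sits in a bounded ball of $L^\infty L^\infty$, which is weak-$*$ compact (Banach--Alaoglu) and weak-$*$ metrizable since $L^1([0,1]\times\mathbb{T}^2)$ is separable. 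Therefore $\Omega = \bigcap_{\delta>0}\overline{\{f^{\infty,\nu}:0<\nu<\delta\}}$ is a decreasing nested intersection of weak-$*$ closures of continuous images of intervals, hence a nested intersection of compact connected sets in a Hausdorff space, and so itself compact and connected. Since $\Omega$ contains the two distinct points $f^\mathrm{even}, f^\mathrm{odd}$ whenever $f_0$ is non-constant, and every connected metric space with more than one point is uncountable, the uncountability follows. The main obstacle I foresee is matching the three simultaneous constraints on $L_{n+1}$ in the inductive construction, but the discrete Cancellation constraint is compatible with making $L_{n+1}$ arbitrarily large by varying the odd integer $2N+1$.
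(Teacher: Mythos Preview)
Your proposal is correct and follows essentially the same approach as the paper: apply Theorem \ref{vcontrol} to the fractal shear flows of Definition \ref{fracflow}, build $(i_k,L_k,\tau_k)$ inductively so that the Finiteness and Cancellation Conditions hold together with $d_*(u_{n+1},u_n)\le\epsilon_n$ (the paper writes $L_{n+1}=2L_{n-1}(2M+1)$, which is exactly your $(2N+1)/(2\tau_n)$ once $\tau_n=1/(4L_{n-1})$), and then combine \ref{list5} with Proposition \ref{shearsolconv} via the triangle inequality. Your treatment of uncountability---expressing $\Omega$ as a nested intersection of weak-$*$ closures of continuous images of intervals and invoking that a connected metrizable space with two points is uncountable---is a slightly more explicit version of the paper's one-line argument, but the content is the same.
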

	\begin{proof}
		Recall the language and notation introduced in Definitions \ref{dyord}, \ref{fracflow}, as well as Definitions \ref{renormalised}, \ref{lagrangian} of renormalised and Lagrangian solutions to \eqref{eqTE}.
		
		Consider any infinite sequence of tuples $\{(i_k, L_k, \tau_k)\}_{k \in \mathbb{N}}$ with $i_k \in \{1,2\}$, $L_k \in \mathbb{N}$, $\tau_k >0$, and satisfying the Finiteness Condition \eqref{eqfincond}.
		
		By Proposition \ref{shearsolconv} there exists a Lagrangian solution $f^n\in C^0L^1$, unique in the class of all weak solutions, to \eqref{eqTE} along $u_n^{\{(i_k,L_k,\tau_k)\}_{k=1}^n}$ for any initial data $f_0 \in L^\infty(\mathbb{T}^2)$. Moreover, $u_n^{\{(i_k,L_k,\tau_k)\}_{k=1}^n}$ is bounded by $1$ in $L^\infty([0,1];L^\infty(\mathbb{T}^2;\mathbb{R}^2))$.
		
		Let $d_*$ be a metric inducing the weak-$*$ topology on
		\begin{equation*}
			X = \left\{u\in L^\infty \left( [0,1];L^\infty(\mathbb{T}^2;\mathbb{R}^2)\right) : \left\|u\right\|_{L^\infty L^\infty}\le 1 \right\}.
		\end{equation*}
		
		Let $f_0 \in L^\infty(\mathbb{T}^2)$, and denote for each $n\in\mathbb{N}$, $\nu > 0$, by $f^{n,\nu}$, respectively $f^n$, the unique weak solution to \eqref{eqADE}, respectively \eqref{eqTE}, along $u_n$ with initial data $f_0$. Moreover denote by $f^{\infty,\nu}$ the unique weak solution to \eqref{eqADE} along $u_\infty^{\{(i_k,L_k,\tau_k)\}_{k\in\mathbb{N}}}$ with initial data $f_0$.
		
		Then by Theorem \ref{vcontrol},
		\begin{enumerate}[label=\textbf{S.\arabic*},ref=S.\arabic*]
			\item \label{listnu2} For all $n\in\mathbb{N}$, there exists $\nu_n>0$, $\epsilon_n>0$ depending only on $\{(i_k,L_k,\tau_k)\}_{k=1}^n$ (and in particular not on $f_0$), with $\nu_n\xrightarrow{n\to\infty}0$ monotonically, such that the following holds true:
			\setcounter{enumi}{4}
			\item \label{listconv2} If $d_*(u_{n+1},u_n) \le \epsilon_n$ for all $n\in\mathbb{N}$, then for all $p \in [1,\infty)$
			\begin{equation*}
				\left\|f^{\infty,\nu_n}-f^n\right\|_{L^\infty L^p}\xrightarrow{n\to\infty}0.
			\end{equation*}
		\end{enumerate}
		
		We now construct such a sequence $\{(i_k,L_k,\tau_k)\}_{k\in \mathbb{N}}$. Let $(i_1, L_1, \tau_1)=(1,2^2,2^{-2})$. We proceed by induction on $n\in\mathbb{N}$. Assume $\{(i_k,L_k,\tau_k)\}_{k=1}^n$ are given, and satisfy for $k\in\{1,...,n\}$.
		\begin{equation*}
			\begin{gathered}
				i_k \in \{1,2\}, \\
				L_k \in \mathbb{N}, \\
				L_k \ge 2^{2k}, \\
				0 < \tau_k \le 2^{-2k},
			\end{gathered}
		\end{equation*}
		
		It is straightforward to check that this is satisfied for the base case $(i_1, L_1, \tau_1)=(1,2^2,2^{-2})$.
		
		If $n \ge 2$, we assume in the inductive hypothesis that also, for $k\in\{1,...,n-1\}$,
		\begin{equation*}
			\begin{gathered}
				i_{k+1} \ne i_k, \\
				2\tau_{k+1} = \frac{1}{2L_k}, \\
				2L_{k+1}\tau_{k}\; \text{is an odd integer}, \\
				d_*\left(u_{k+1}^{\{(i_{l},L_{l},\tau_{l})\}_{l=1}^{k+1}},u_{k}^{\{(i_{l},L_{l},\tau_{l})\}_{l=1}^{k}}\right) \le \epsilon_{k}.
			\end{gathered}
		\end{equation*}
		
		We then choose $(i_{n+1}, L_{n+1}, \tau_{n+1})$ as follows. Let $i_{n+1}\in\{1,2\}\setminus\{i_n\}$. Let $\tau_{n+1}=\frac{1}{4L_n}$ which by the inductive hypothesis satisfies $\tau_{n+1}\le 2^{-2k-2}$. Let
		\begin{equation}\label{eqLn}
			L_{n+1}=2L_{n-1}(2M+1),
		\end{equation}
		for some large $M\in\mathbb{N}$ to be chosen, where when $n=1$ we take $L_0 = 1$. This will ensure $2L_{n+1}\tau_n$ is an odd integer.
		
		By \eqref{eqshearweakconv} and Definition \ref{fracflow}, we see for $n\in\mathbb{N}$ fixed, that
		\begin{equation*}
			d_*\left(u_{n+1}^{\{(i_k,L_k,\tau_k)\}_{k=1}^{n+1}},u_n^{\{(i_k,L_k,\tau_k)\}_{k=1}^n}\right)\xrightarrow{(L_{n+1})\to\infty}0.
		\end{equation*}
		
		Therefore, by taking $M\in\mathbb{N}$ large enough in \eqref{eqLn}, we have that both $L_{n+1}\ge 2^{2n+2}$, and
		\begin{equation*}
			d_*\left(u_{n+1}^{\{(i_k,L_k,\tau_k)\}_{k=1}^{n+1}},u_n^{\{(i_k,L_k,\tau_k)\}_{k=1}^n}\right) \le \epsilon_n,
		\end{equation*}
		completing the induction.
		
		We have constructed $\{(i_k, L_k, \tau_k)\}_{k \in \mathbb{N}}$ satisfying the Finiteness and Cancellation Conditions \eqref{eqfincond}, \eqref{eqcancond}, and also \eqref{listnu2}, \eqref{listconv2}. The main statement of the theorem is now a straightforward corollary of Proposition \ref{shearsolconv}.
		
		Finally we show that, if $f_0$ is not constant, then the set of weak-$*$ limit points of $f^\nu\in L^\infty([0,1];L^\infty(\mathbb{T}^2))$ as $\nu\to0$ is uncountable. Working in the weak-$*$ topology of $L^\infty L^\infty$, the set of vanishing viscosity limit points is bounded, and so is a metric space. Moreover, the map $\nu\to f^\nu$ is continuous, which implies that the set of limit points is connected. If it is a connected metric space, then it is either a singleton or uncountable. Therefore, we conclude by observing that $f^\mathrm{even}\ne f^\mathrm{odd}$ are at least two limit points.
	\end{proof}
	
	\section{Inadmissibility}\label{Inadmissibility}
	\subsection{Notation}
	We continue with the same notation for the 2-torus introduced in Section \ref{torusnotation}. However, we no longer make use of the notation in Definitions \ref{shearflow}, \ref{dyord}, \ref{fracflow}. In addition, we define the binary expansion of some $x=(x_1,x_2) \in \mathbb{T}^2$ as follows. For the representative $(x_1,x_2)\in[0,1)^2$ denote for $i\in\{1,2\}$, $k\in\mathbb{N}$, by $x_{i,k}$ the $k$\textsuperscript{th} binary digit of the $i$\textsuperscript{th} coordinate of $x$. That is, for each $i \in \{1,2\}$,
	\begin{equation}\label{eqbinexp}
		x_i = \sum_{k=1}^\infty x_{i,k} 2^{-k},
	\end{equation}
	where $x_{i,k} \in \{0,1\}$ and $x_{i,k} \centernot{\xrightarrow{k\to\infty}} 1$ (as is standard to ensure uniqueness of the binary expansion).
	\vspace{\baselineskip}
	
	First, we define vector fields and corresponding Lagrangian flows, which swap points in $\mathbb{T}^2$ according to their binary expansion. These `binary swaps' form the building block of our construction. Subsequently we shall give a divergence-free vector field in $L^\infty([0,100];L^\infty(\mathbb{T}^2;\mathbb{R}^2))$ with $L^\infty L^\infty$-norm equal to 1, which perfectly mixes the transported scalar to its spatial average, and subsequently unmixes, any initial data to \eqref{eqTE}. Our aim is then to show that this behaviour is the unique limit point of vanishing viscosity of the associated solution to \eqref{eqADE}. In order to ensure uniqueness of the limit points it is necessary to `gradually' perform these binary swaps, that is they must be restricted to gradually smaller regions of space, see \eqref{eqgradualmixing} below.
	
	\begin{definition}[Lagrangian binary swap]\label{lagswap}
		Suppose $i \in \{1,2\}$, $k \in \mathbb{N}$, $n\in\{1,...,2^{\left\lfloor k/2\right\rfloor}\}$, $L \in \mathbb{N}$, with $L \ge k+1$.
		
		The proof below will define a time-dependent, divergence-free vector field called the $(i,k,n;L)$-binary swap
		\begin{equation*}
			u^{(i,k,n;L)} : \mathbb{T}^2 \times [0,3\cdot2^{-k}] \to \mathbb{R}^2,
		\end{equation*}
		and a corresponding Lagrangian flow map (Definition \ref{lagrangian}) $\big\{y^{(i,k,n;L)}_t\big\}_{t \in [0,3\cdot2^{-k}]}$ with the properties \eqref{eqbinaryswap}-\eqref{eqrectangle} below.
		
		Define $J_{k,n}=\left[(n-1)2^{-\left\lfloor k/2\right\rfloor},n2^{-\left\lfloor k/2\right\rfloor}\right]\subset\mathbb{T}$. Then at time $t=3\cdot2^{-k}$, for a.e. $x=(x_1,x_2) \in \mathbb{T}^2$, $y^{(i,k,n;L)}_{3\cdot2^{-k}}$ will swap the $k$\textsuperscript{th} and $(k+1)$\textsuperscript{th} binary digits of $x_i$ if $x_i\in J_{k,n}$.
		
		That is $y_0^{(i,k,n;L)} = \mathrm{Id}$. For a.e. $x=(x_1,x_2)\in\mathbb{T}^2$ denote by $(x'_1,x'_2)=y^{(i,k,n;L)}_{3\cdot2^{-k}}(x)$. Following the notation for binary expansions in \eqref{eqbinexp}, for $j\in\{1,2\}$ the coordinate, and for $l\in\mathbb{N}$ the binary digit,
		\begin{equation}\label{eqbinaryswap}
			\begin{aligned}
				x'_j & = x_j \; \text{if} \; j\neq i, \\
				x'_{i,l} & = x_{i,l} \; \text{for} \; l\notin\{k,k+1\}, \\
				x'_{i,k+1} & = \begin{cases}
					x_{i,k} & \text{if } x_i \in J_{k,n}, \\
					x_{i,k+1} & \text{otherwise},
				\end{cases} \\
				x'_{i,k} & = \begin{cases}
					x_{i,k+1} & \text{if } x_i \in J_{k,n}, \\
					x_{i,k} & \text{otherwise}.
				\end{cases}
			\end{aligned}
		\end{equation}
		
		Additionally, the vector field $u^{(i,k,n;L)}$ will satisfy
		\begin{equation}\label{equnifbound}
			\left\|u^{(i,k,n;L)}\right\|_{L^\infty L^\infty} \le 1,
		\end{equation}
		and for all $t\in[0,3\cdot2^{-k}]$, and all $x=(x_1,x_2)\in\mathbb{T}^2$ with $x_i \notin J_{k,n}$,
		\begin{equation}\label{eqgradualmixing}
			u^{(i,k,n;L)}(x,t)=0.
		\end{equation}
		
		Moreover, for $i\in\{1,2\}$, $k\in\mathbb{N}$ fixed, as $L \to \infty$,
		\begin{equation}\label{eqweakconv}
			u^{(i,k,n;L)} \xrightharpoonup{L \to \infty} 0, \; \text{in weak-$*$ $L^\infty L^\infty$}.
		\end{equation}
		
		Finally, for any $r, r' \in \{1,...,2^{k-1}\}$, for the spatial intervals $J=[(r-1)2^{1-k},r2^{1-k}]$, and $J'=[(r'-1)2^{1-k},r'2^{1-k}]$, the Lagrangian-flow $y^{(i,k,n;L)}_t$ preserves the squares
		\begin{equation}\label{eqrectangle}
			y^{(i,k,n;L)}_t:J\times J'\leftrightarrow J\times J'.
		\end{equation}
		\begin{proof}
			We now construct the above vector field and Lagrangian flow. We shall give the construction for $i=1$ and then define its coordinate reflection for $i=2$. That is, for each $(x_1,x_2) \in \mathbb{T}^2$, $t \in [0,3\cdot2^{-k}]$
			\begin{gather}
				u^{(2,k,n;L)}((x_1,x_2),t) = u^{(1,k,n;L)}((x_2,x_1),t), \label{eqi2u} \\
				y_t^{(2,k,n;L)}((x_1,x_2)) = y_t^{(1,k,n;L)}((x_2,x_1)). \label{eqi2y}
			\end{gather}
			so that \eqref{equnifbound}, \eqref{eqweakconv}, \eqref{eqbinaryswap} with $i=2$ follow from the same result for $i=1$.
			
			We shall achieve the required binary-swaps \eqref{eqbinaryswap} by piecing together particular rotating vector fields which perform half rotations of rectangular regions of $\mathbb{T}^2$. Suppose $W,H>0$, and define the following 1-Lipschitz stream-function
			\begin{gather*}
				\psi:\left(-\frac{W}{2},\frac{W}{2}\right)\times\left(-\frac{H}{2},\frac{H}{2}\right)\subset\mathbb{R}^2 \to \mathbb{R}, \\
				\psi((x_1,x_2)) = \min\{W,H\}\cdot\max\left\{\left(\frac{x_1}{W}\right)^2,\left(\frac{x_2}{H}\right)^2\right\}.
			\end{gather*}
			
			This defines a 1-bounded, time-independent, divergence-free, vector field $\nabla^\perp\psi = \big(-\frac{\partial\psi}{\partial x_2}, \frac{\partial\psi}{\partial x_1}\big)$ on the open rectangle $\left(-\frac{W}{2},\frac{W}{2}\right)\times\left(-\frac{H}{2},\frac{H}{2}\right)$. $\nabla^\perp\psi$ has rectangular flow lines, as illustrated in the following diagram:
			\begin{equation}\label{eqrectflow}
				\begin{tikzpicture}[scale=3/2]
					\pgfdeclarelayer{background}
					\pgfsetlayers{background,main}
					\begin{scope}[scale=1/2,decoration={
							markings,
							mark=at position 1/36 with {\arrowreversed[xshift=-2pt]{latex[length=4pt]}},
							mark=at position 10/36 with {\arrowreversed[xshift=-2pt]{latex[length=4pt]}},
							mark=at position 19/36 with {\arrowreversed[xshift=-2pt]{latex[length=4pt]}},
							mark=at position 28/36 with {\arrowreversed[xshift=-2pt]{latex[length=4pt]}},
						}]
						\draw[dashed,postaction={decorate}] (3,3/8) rectangle (5,5/8);
						\draw[dashed,postaction={decorate}] (2,1/4) rectangle (6,3/4);
						\draw[dashed,postaction={decorate}] (1,1/8) rectangle (7,7/8);
						\draw[thick] (0,0) -- (8,1);
						\draw[thick,] (0,1) -- (8,0);
						\draw[thick,red] (0,0) rectangle (8,1);
						\coordinate (n3) at (0,0);
						\coordinate (n4) at (0,1);
						\coordinate (n5) at (8,0);
						\coordinate (n6) at (8,1);
						\draw[thick,decorate,decoration = {
							calligraphic brace,
							raise=5pt,
							amplitude=5pt,
							aspect=0.5
						}] (0,1) -- (8,1) node[pos=0.5,above=10pt,black]{$W$};
						\draw[thick,decorate,decoration = {
							calligraphic brace,
							raise=5pt,
							amplitude=2.5pt,
							aspect=0.5
						}] (0,0) -- (0,1) node[pos=0.5,left=7pt,black]{$H$};
					\end{scope}
				\end{tikzpicture}
			\end{equation}
			
			Moreover, on each triangular segment, $\nabla^\perp\psi$ is a linear function of space, and each flow line within each segment has a time period equal to $\max\{W,H\}$.
			
			Therefore, $\nabla^\perp\psi$ admits a Lagrangian flow $\big\{y_t\big\}_{t\in(-\infty,\infty)}$ on $\left(-\frac{W}{2},\frac{W}{2}\right)\times\left(-\frac{H}{2},\frac{H}{2}\right)$. That is for all $t\in(-\infty,\infty)$, $y_t:\left(-\frac{W}{2},\frac{W}{2}\right)\times\left(-\frac{H}{2},\frac{H}{2}\right)\leftrightarrow \left(-\frac{W}{2},\frac{W}{2}\right)\times\left(-\frac{H}{2},\frac{H}{2}\right)$ is a Lebesgue-measure preserving bijection, $y_0= \mathrm{Id}$, and for all $x\in\left(-\frac{W}{2},\frac{W}{2}\right)\times\left(-\frac{H}{2},\frac{H}{2}\right)$, and all $t\in(-\infty,\infty)$, $\frac{d}{dt}y_t(x)=\nabla^\perp\psi(y_t(x))$.
			
			Moreover, we have $y_0= \mathrm{Id}$, while $y_{2\max\{W,H\}}$ is exactly a half rotation of $\left(-\frac{W}{2},\frac{W}{2}\right)\times\left(-\frac{H}{2},\frac{H}{2}\right)$ around its centre. Finally, both $y_t$ and $y_t^{-1}$ are Lipschitz in both $x$ and $t$. That is, for all $x,x' \in \left(-\frac{W}{2},\frac{W}{2}\right)\times\left(-\frac{H}{2},\frac{H}{2}\right)$, and all $t,s \in (-\infty,\infty)$,
			\begin{equation}\label{eqlipbound}
				\begin{gathered}
					\left|y_t(x) - y_s(x')\right| \le C(|x-x'| + |t-s|), \\
					\left|y_t^{-1}(x) - y_s^{-1}(x')\right| \le C(|x-x'| + |t-s|),
				\end{gathered}
			\end{equation}
			for some constant $C>0$.
			
			Consider now any open rectangle $Q\subset\mathbb{R}^2$ with width $W$ and height $H$. By translating \eqref{eqrectflow} to $Q$, we have the same vector field on $Q$, which, to simplify the following presentation, we symbolically notate by 
			\begin{equation*}
				\vcenter{\hbox{\begin{tikzpicture}[scale=1]
							\pgfdeclarelayer{background}
							\pgfsetlayers{background,main}
							\begin{scope}[scale=1/4]
								\draw[red, thick] (-8,0) rectangle (0,1);
								\draw[thick,postaction={decorate},decoration={
									markings,
									mark=at position 1/4 with {\arrow[scale=1]{latex}},
									mark=at position 3/4 with {\arrow[scale=1]{latex}},
								}] (-4,1/2) ellipse (2 and 1/4);
							\end{scope}
				\end{tikzpicture}}} : Q \to \mathbb{R}^2.
			\end{equation*}
			
			We iterate that this is a 1-bounded, time-independent, divergence-free vector field, admitting a Lipschitz Lagrangian flow $\big\{y_t\big\}_{t\in(-\infty,\infty)}$, as in \eqref{eqlipbound}. Moreover, $y_0=\mathrm{Id}$, while $y_{2\max\{W,H\}}$ is exactly a half rotation of $Q$ around its centre.
			
			We also denote the same vector field multiplied by $-1$ by the same diagram with the arrows reversed,
			\begin{equation*}
				\vcenter{\hbox{\begin{tikzpicture}[scale=1]
							\pgfdeclarelayer{background}
							\pgfsetlayers{background,main}
							\begin{scope}[scale=1/4]
								\draw[red, thick] (-8,0) rectangle (0,1);
								\draw[thick,postaction={decorate},decoration={
									markings,
									mark=at position 1/4 with {\arrowreversed[scale=1]{latex}},
									mark=at position 3/4 with {\arrowreversed[scale=1]{latex}},
								}] (-4,1/2) ellipse (2 and 1/4);
							\end{scope}
				\end{tikzpicture}}} = -
				\vcenter{\hbox{\begin{tikzpicture}[scale=1]
							\pgfdeclarelayer{background}
							\pgfsetlayers{background,main}
							\begin{scope}[scale=1/4]
								\draw[red, thick] (-8,0) rectangle (0,1);
								\draw[thick,postaction={decorate},decoration={
									markings,
									mark=at position 1/4 with {\arrow[scale=1]{latex}},
									mark=at position 3/4 with {\arrow[scale=1]{latex}},
								}] (-4,1/2) ellipse (2 and 1/4);
							\end{scope}
				\end{tikzpicture}}}.
			\end{equation*}
			
			This admits the Lagrangian flow $\big\{y_t^{-1}\big\}_{t\in(-\infty,\infty)}$, and so enjoys the same properties as above.
			
			Next, we piece these half-rotations together to perform orientation-preserving swaps. This is exactly the building block required to perform the binary swaps \eqref{eqbinaryswap}. Let $Q\subset\mathbb{R}^2$ be again an open rectangle with width $W>0$ and height $H>0$. Let $L \in \mathbb{N}$ and suppose in addition that $H$ is an integer multiple of $2^{1-L}$, and $W \ge 2^{1-L}$. 
			
			Subsequently, we define a time-dependent vector field, symbolically notated by $\vcenter{\hbox{\begin{tikzpicture}[scale=1/2]
						\pgfdeclarelayer{background}
						\pgfsetlayers{background,main}
						\begin{scope}
							\draw[dashed] (1,0) -- (1,1/2);
							\draw[thick,-latex,yshift=3pt] (0.5,1/4) -- (1.5,1/4);
							\draw[thick,latex-,yshift=-3pt] (0.5,1/4) -- (1.5,1/4);
							\draw[thick,red] (0,0) rectangle (2,1/2);
						\end{scope}
			\end{tikzpicture}}}: Q \times \left[0,3W\right] \to \mathbb{R}^2$, by
			\begin{equation}\label{eqflipdiag}
				\vcenter{\hbox{
						\begin{tikzpicture}[scale=1]
							\pgfdeclarelayer{background}
							\pgfsetlayers{background,main}
							\begin{scope}
								\draw[dashed] (1,0) -- (1,1);
								\draw[thick,-latex,yshift=3pt] (0.5,1/2) -- (1.5,1/2);
								\draw[thick,latex-,yshift=-3pt] (0.5,1/2) -- (1.5,1/2);
								\draw[thick,red] (0,0) rectangle (2,1);
							\end{scope}
				\end{tikzpicture}}}\;(\cdot,t) = \begin{cases}
					\begin{tikzpicture}[scale=2/3]
						\pgfdeclarelayer{background}
						\pgfsetlayers{background,main}
						\begin{scope}[scale=2,shift={(0.75,0.5)}]
							\foreach \y in {0,...,3}{
								\pgfmathparse{int(mod(\y,2))}
								\ifnum\pgfmathresult=0
								\draw[thick,postaction={decorate},decoration={
									markings,
									mark=at position 1/4 with {\arrowreversed[scale=1]{latex}},
									mark=at position 3/4 with {\arrowreversed[scale=1]{latex}},
								}] (1,\y/4+1/8) ellipse (1/2 and 1/16);
								\else
								\draw[thick,postaction={decorate},decoration={
									markings,
									mark=at position 1/4 with {\arrow[scale=1]{latex}},
									mark=at position 3/4 with {\arrow[scale=1]{latex}},
								}] (1,\y/4+1/8) ellipse (1/2 and 1/16);
								\fi
							}
							\foreach \y in {1,...,3}{
								\draw[dashed] (0,\y/4) -- (2,\y/4);
							}
							\foreach \y in {1,...,1}{
								\draw[thick,dashed] (0,\y/2) -- (2,\y/2);
							}
							\draw[thick,red] (0,0) rectangle (2,1);
							\coordinate (n3) at (0,0);
							\coordinate (n4) at (0,1);
							\draw[thick,decorate,decoration = {
								calligraphic brace,
								raise=2pt,
								amplitude=5pt,
								aspect=0.5
							}] (0,1) --  (2,1) node[pos=0.5,above=5pt,black]{$W$};
							\draw[thick,decorate,decoration = {
								calligraphic brace,
								raise=2pt,
								amplitude=5pt,
								aspect=0.5
							}] (0,0) --  (0,1) node[pos=0.5,left=5pt,black]{$H$};
							\draw[thick,decorate,decoration = {
								calligraphic brace,
								raise=2pt,
								amplitude=2pt,
								aspect=0.5
							}] (2,0.25) --  (2,0) node[pos=0.5,right=2pt,black]{$2^{-L}$};
							\draw[thick,decorate,decoration = {
								calligraphic brace,
								raise=2pt,
								amplitude=2pt,
								aspect=0.5
							}] (2,1) --  (2,0.5) node[pos=0.5,right=2pt,black]{$2^{1-L}$};
							\coordinate[right=10pt] (time0) at (2,1) {};
						\end{scope}
					\end{tikzpicture} & \text{if $t \in [0,2W)$,} \\
					\begin{tikzpicture}[scale=2/3]
						\pgfdeclarelayer{background}
						\pgfsetlayers{background,main}
						\begin{scope}[scale=2,shift={(3.25,0.5)}]
							\foreach \y in {0,...,3}{
								\pgfmathparse{int(mod(\y,2))}
								\foreach \x in {0,1}{
									\ifnum\pgfmathresult=0
									\draw[thick,postaction={decorate},decoration={
										markings,
										mark=at position 1/4 with {\arrowreversed[scale=1]{latex}},
										mark=at position 3/4 with {\arrowreversed[scale=1]{latex}},
									}] (\x+1/2,\y/4+1/8) ellipse (1/4 and 1/16);
									\else
									\draw[thick,postaction={decorate},decoration={
										markings,
										mark=at position 1/4 with {\arrow[scale=1]{latex}},
										mark=at position 3/4 with {\arrow[scale=1]{latex}},
									}] (\x+1/2,\y/4+1/8) ellipse (1/4 and 1/16);
									\fi
								}
							}
							\foreach \y in {1,...,3}{
								\draw[dashed] (0,\y/4) -- (2,\y/4);
							}
							\foreach \y in {1,...,1}{
								\draw[thick,dashed] (0,\y/2) -- (2,\y/2);
							}
							\draw[thick,dashed] (1,0) -- (1,1);
							\draw[thick,red] (0,0) rectangle (2,1);
							\draw[thick,decorate,decoration = {
								calligraphic brace,
								raise=2pt,
								amplitude=5pt,
								aspect=0.5
							}] (0,1) -- (1,1) node[pos=0.5,above=5pt,black]{$\frac{1}{2}W$};
							\draw[thick,decorate,decoration = {
								calligraphic brace,
								raise=2pt,
								amplitude=5pt,
								aspect=0.5
							}] (0,0) --  (0,1) node[pos=0.5,left=5pt,black]{$H$};
							\draw[thick,decorate,decoration = {
								calligraphic brace,
								raise=2pt,
								amplitude=2pt,
								aspect=0.5
							}] (2,1) --  (2,3/4) node[pos=0.5,right=2pt,black]{$2^{-L}$};
							\coordinate[right=10pt] (time1) at (2,1) {};
						\end{scope}
					\end{tikzpicture} & \text{if $t \in [2W,3W]$.}
				\end{cases}
			\end{equation}
			
			This has the property that at time $t=3W$ the left and right halves are swapped in an orientation preserving manner. This is illustrated below:
			\begin{equation*}
				\begin{tikzpicture}[scale=2/3]
					\pgfdeclarelayer{background}
					\pgfsetlayers{background,main}
					\begin{scope}[scale=2,shift={(-2,0.5)}]
						\shade[left color=black!80,right color=white,shading angle=0] (0,0) rectangle (2,1);
						\shade[opacity=0.6,left color=black,right color=white,shading angle=90] (0,0) rectangle (2,1);
						\draw[thick,red] (0,0) rectangle (2,1);
					\end{scope}
					\begin{scope}[scale=2,shift={(0.75,0.5)}]
						\foreach \y in {0,...,3}{
							\pgfmathparse{int(mod(\y,2))}
							\ifnum\pgfmathresult=0
							\draw[thick,postaction={decorate},decoration={
								markings,
								mark=at position 1/4 with {\arrowreversed[scale=1]{latex}},
								mark=at position 3/4 with {\arrowreversed[scale=1]{latex}},
							}] (1,\y/4+1/8) ellipse (1/2 and 1/16);
							\else
							\draw[thick,postaction={decorate},decoration={
								markings,
								mark=at position 1/4 with {\arrow[scale=1]{latex}},
								mark=at position 3/4 with {\arrow[scale=1]{latex}},
							}] (1,\y/4+1/8) ellipse (1/2 and 1/16);
							\fi
						}
						\foreach \y in {1,...,3}{
							\draw[dashed] (0,\y/4) -- (2,\y/4);
						}
						\draw[thick,red] (0,0) rectangle (2,1);
						\coordinate (n3) at (0,0);
						\coordinate (n4) at (0,1);
						\draw[thick,decorate,decoration = {
							calligraphic brace,
							raise=2pt,
							amplitude=5pt,
							aspect=0.5
						}] (0,1) --  (2,1) node[pos=0.5,above=5pt,black]{$W$};
						\draw[thick,decorate,decoration = {
							calligraphic brace,
							raise=2pt,
							amplitude=2pt,
							aspect=0.5
						}] (0,3/4) --  (0,1) node[pos=0.5,left=2pt,black]{$2^{-L}$};
						\coordinate[right=10pt] (time0) at (2,1) {};
					\end{scope}
					\begin{scope}[scale=2,shift={(3.25,0.5)}]
						\foreach \y in {0,...,3}{
							\pgfmathparse{int(mod(\y,2))}
							\foreach \x in {0,1}{
								\ifnum\pgfmathresult=0
								\draw[thick,postaction={decorate},decoration={
									markings,
									mark=at position 1/4 with {\arrowreversed[scale=1]{latex}},
									mark=at position 3/4 with {\arrowreversed[scale=1]{latex}},
								}] (\x+1/2,\y/4+1/8) ellipse (1/4 and 1/16);
								\else
								\draw[thick,postaction={decorate},decoration={
									markings,
									mark=at position 1/4 with {\arrow[scale=1]{latex}},
									mark=at position 3/4 with {\arrow[scale=1]{latex}},
								}] (\x+1/2,\y/4+1/8) ellipse (1/4 and 1/16);
								\fi
							}
						}
						\foreach \y in {1,...,3}{
							\draw[dashed] (0,\y/4) -- (2,\y/4);
						}
						\draw[dashed] (1,0) -- (1,1);
						\draw[thick,red] (0,0) rectangle (2,1);
						\draw[thick,decorate,decoration = {
							calligraphic brace,
							raise=2pt,
							amplitude=5pt,
							aspect=0.5
						}] (0,1) -- (1,1) node[pos=0.5,above=5pt,black]{$\frac{1}{2}W$};
						\draw[thick,decorate,decoration = {
							calligraphic brace,
							raise=2pt,
							amplitude=2pt,
							aspect=0.5
						}] (2,1) --  (2,3/4) node[pos=0.5,right=2pt,black]{$2^{-L}$};
						\coordinate[right=10pt] (time1) at (2,1) {};
					\end{scope}
					\begin{scope}[scale=2,shift={(6,0.5)}]
						\shade[left color=black!80,right color=white,shading angle=0] (0,0) rectangle (2,1);
						\shade[opacity=0.6,left color=black!50,right color=white,shading angle=90] (0,0) rectangle (1,1);
						\shade[opacity=0.6,left color=black,right color=black!50,shading angle=90] (1,0) rectangle (2,1);
						\draw[thick, red] (0,0) rectangle (2,1);
						\coordinate[right=10pt] (time2) at (2,1) {};
					\end{scope}
					\draw[thick, -latex] (1,0) -- (11,0);
					\fill (1,0) circle (2pt);
					\fill (6,0) circle (2pt);
					\fill (11,0) circle (2pt);
					\node[below=5pt] (t0) at (1,0) {$t=0$};
					\node[below=5pt] (t1) at (6,0) {$t=2W$};
					\node[below=5pt] (t2) at (11,0) {$t=3W$};
					\draw[very thick,decorate,decoration = {
						calligraphic brace,
						raise=5pt,
						amplitude=5pt,
						aspect=0.5
					}] (1,0) --  (6,0) node[pos=0.5,above=5pt,black]{};
					\draw[very thick,decorate,decoration = {
						calligraphic brace,
						raise=5pt,
						amplitude=5pt,
						aspect=0.5
					}] (6,0) --  (11,0) node[pos=0.5,above=5pt,black]{};
				\end{tikzpicture}
			\end{equation*}
			
			Finally, suppose $k \in\mathbb{N}$, $n \in \{1,...,2^{\left\lfloor k/2\right\rfloor}\}$, and $L\in\mathbb{N}$, with $L\ge k+1$.
			
			Let $J_{k,n}=\left[(n-1)2^{-\left\lfloor k/2\right\rfloor},n2^{-\left\lfloor k/2\right\rfloor}\right]\subset\mathbb{T}$.
			
			For $W=2^{-k}$, $H=1$, and $L\ge k+1$ we may use
			$\vcenter{\hbox{\begin{tikzpicture}[scale=1/2]
						\pgfdeclarelayer{background}
						\pgfsetlayers{background,main}
						\begin{scope}
							\draw[dashed] (1,0) -- (1,1/2);
							\draw[thick,-latex,yshift=3pt] (0.5,1/4) -- (1.5,1/4);
							\draw[thick,latex-,yshift=-3pt] (0.5,1/4) -- (1.5,1/4);
							\draw[thick,red] (0,0) rectangle (2,1/2);
						\end{scope}
			\end{tikzpicture}}}$ to define the time-dependent vector field
			\begin{gather*}
				u^{(1,k,n;L)} : \mathbb{T}^2 \times [0,3\cdot2^{-k}] \to \mathbb{R}^2, \\
				\begin{tikzpicture}
					\pgfdeclarelayer{background}
					\pgfsetlayers{background,main}
					\begin{scope}[scale=3/2]
						\draw[thick,postaction={decorate},decoration={
							markings,
							mark=at position 1/8 with {\arrow[xshift=2pt]{>[length=4pt]}},
							mark=at position 3/8 with {\arrow[xshift=4pt]{>[length=4pt]>[length=4pt]}},
							mark=at position 5/8 with {\arrow[xshift=2pt]{<[length=4pt]}},
							mark=at position 7/8 with {\arrow[xshift=4pt]{<[length=4pt]<[length=4pt]}},
						}] (0,0) rectangle (4,4);
						\foreach \x in {1,...,15}{
							\draw[dashed] (\x/4,0) -- (\x/4,4);
						}
						\foreach \x in {1,...,3}{
							\draw[thick,dashed] (\x,0) -- (\x,4);
						}
						\foreach \x in {2,...,3}{
							\draw[thick, red] (\x+1/4,0) rectangle (\x+3/4,4);
							\draw[thick,-latex,yshift=2pt] (\x+3/8,2) -- (\x+5/8,2);
							\draw[thick,latex-,yshift=-2pt] (\x+3/8,2) -- (\x+5/8,2);
						}
						\coordinate (n1) at (3,0);
						\coordinate (n2) at (3,4);
						\draw[thick,decorate,decoration = {
							calligraphic brace,
							raise=2pt,
							amplitude=5pt,
							aspect=0.5
						}] (3,0) -- (2,0) node[pos=0.5,below=5pt,black]{$2^{1-k}$};
						\draw[thick,decorate,decoration = {
							calligraphic brace,
							raise=2pt,
							amplitude=3pt,
							aspect=0.5
						}] (0,4) -- (0.25,4) node[pos=0.5,above=3pt,black]{$2^{-k-1}$};
						\draw[thick,decorate,decoration = {
							calligraphic brace,
							raise=2pt,
							amplitude=5pt,
							aspect=0.5
						}] (2.25,4) -- (2.75,4) node[pos=0.5,above=5pt,black]{$2^{-k}$};
						\draw[thick,decorate,decoration = {
							calligraphic brace,
							raise=20pt,
							amplitude=5pt,
							aspect=0.5
						}] (4,0) -- (2,0) node[pos=0.5,below=25pt,black]{$J_{k,n}$};
						\draw[thick,decorate,decoration = {
							calligraphic brace,
							raise=20pt,
							amplitude=5pt,
							aspect=0.5
						}] (0,4) -- (4,4) node[pos=0.5,above=25pt,black]{$\mathbb{T}$};
						\draw[thick,decorate,decoration = {
							calligraphic brace,
							raise=20pt,
							amplitude=5pt,
							aspect=0.5
						}] (0,0) -- (0,4) node[pos=0.5,left=25pt,black]{$\mathbb{T}$};
					\end{scope}
				\end{tikzpicture}.
			\end{gather*}
			
			Note that $u^{(1,k,n;L)}$ is piecewise given by \eqref{eqrectflow}, and so is 1-bounded and divergence-free.
			
			By construction \eqref{eqbinaryswap}, \eqref{eqgradualmixing} are satisfied, while \eqref{eqrectangle} follows from the illustration \eqref{eqflipdiag} after noting the bound $2^{-L} \le 2^{1-k}$.
			
			Finally, to prove \eqref{eqweakconv}, notice in \eqref{eqflipdiag} we alternate between
			$\vcenter{\hbox{\begin{tikzpicture}[scale=1]
						\pgfdeclarelayer{background}
						\pgfsetlayers{background,main}
						\begin{scope}[scale=1/4]
							\draw[red, thick] (-8,0) rectangle (0,1);
							\draw[thick,postaction={decorate},decoration={
								markings,
								mark=at position 1/4 with {\arrow[scale=1]{latex}},
								mark=at position 3/4 with {\arrow[scale=1]{latex}},
							}] (-4,1/2) ellipse (2 and 1/4);
						\end{scope}
			\end{tikzpicture}}}$ and
			$\vcenter{\hbox{\begin{tikzpicture}[scale=1]
						\pgfdeclarelayer{background}
						\pgfsetlayers{background,main}
						\begin{scope}[scale=1/4]
							\draw[red, thick] (-8,0) rectangle (0,1);
							\draw[thick,postaction={decorate},decoration={
								markings,
								mark=at position 1/4 with {\arrowreversed[scale=1]{latex}},
								mark=at position 3/4 with {\arrowreversed[scale=1]{latex}},
							}] (-4,1/2) ellipse (2 and 1/4);
						\end{scope}
			\end{tikzpicture}}}$, and so for all $x \in\mathbb{T}^2$, $t \in [0,3\cdot2^{-k}]$
			\begin{equation*}
				u^{(1,k,n;L)}(x,t) = - u^{(1,k,n;L)}(x+2^{-L}e_2,t).
			\end{equation*}
			
			The proof of \eqref{eqweakconv} now follows the same argument as the proof of \eqref{eqshearweakconv}.
		\end{proof}
	\end{definition}
	\vspace{\baselineskip}
	
	Next, we show the well-posedness of \eqref{eqTE} along these vector fields. As for Proposition \ref{shearuniq}, a weaker version of the following result, when $f\in L^\infty L^\infty$, follows directly from the well-posedness theory of Ambrosio, \cite{ambrosio2004transport}. However, for the sake of completeness, we again prefer to give a direct elementary proof of uniqueness, valid for all $f\in L^1 L^1$.
	
	\begin{proposition}[Binary-swap uniqueness]\label{swapuniq}
		Suppose $i \in \{1,2\}$, $k\in\mathbb{N}$, $n\in\{1,...,2^{\left\lfloor k/2\right\rfloor}\}$, $L \in\mathbb{N}$, with $L \ge k+1$.
		
		Suppose $f\in L^1L^1$ is a weak solution to \eqref{eqTE} along $u^{(i,k,n;L)}$ on the open time interval $(0,3\cdot2^{-k})$.
		
		Then there exists some $f_0 \in L^1(\mathbb{T}^2)$ such that $f$ is (a.e. in $\mathbb{T}^2\times(0,3\cdot2^{-k})$) equal to the Lagrangian solution (Definition \ref{lagrangian})
		\begin{equation}\label{eqswaplag}
			f(\cdot, t)=f_0 \circ \big(y^{(i,k,n;L)}_t\big)^{-1}.
		\end{equation}
		
		In particular, $f$ is a renormalised weak solution to \eqref{eqTE} (Definition \ref{renormalised}) along $u^{(i,k,n;L)}$ with initial data $f_0$. Moreover $f_0 \circ \big(y^{(i,k,n;L)}_t\big)^{-1} \in C^0([0,3\cdot2^{-k}];L^1(\mathbb{T}^2))$.
	\end{proposition}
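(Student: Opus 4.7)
The plan is to adapt the strategy of Proposition \ref{shearuniq}, performing a change of variables along the Lagrangian flow $y_t^{(i,k,n;L)}$ from Definition \ref{lagswap}. First, I would reduce to $i = 1$ using the coordinate symmetry \eqref{eqi2u}--\eqref{eqi2y}, and abbreviate $u = u^{(1,k,n;L)}$, $y_t = y_t^{(1,k,n;L)}$, $I = (0, 3\cdot 2^{-k})$. The structural features of $u$ to extract from Definition \ref{lagswap} are: $u$ is time-independent on each of the two phase sub-intervals $I_1 = (0, 2\cdot 2^{-k})$ and $I_2 = (2\cdot 2^{-k}, 3\cdot 2^{-k})$; on each $I_j$ the torus is partitioned into a finite collection $\mathcal{R}_j$ of open rectangles plus a null set, with $u$ vanishing outside $\bigcup \mathcal{R}_j$ and equal to $\nabla^\perp \psi_Q$ on each $Q \in \mathcal{R}_j$ for a piecewise-linear Lipschitz stream function $\psi_Q$; each $Q \in \mathcal{R}_j$ is invariant under the flow because $\partial Q$ is itself a flow line; and on each $Q$ the restricted flow is bi-Lipschitz with measure-preserving inverse.

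Next, I would run the change of variables argument rectangle by rectangle within each phase. Fix $j \in \{1,2\}$, let $t_j$ be the left endpoint of $I_j$, and fix any $Q \in \mathcal{R}_j$. For $\phi \in C_c^\infty(Q \times I_j)$, set $\psi(x,t) = \phi(y_{t-t_j}^{-1}(x), t)$ on $Q \times I_j$ and $\psi = 0$ elsewhere on $\mathbb{T}^2 \times I$. By invariance of $Q$ and bi-Lipschitzness of the restricted flow, $\psi$ is Lipschitz with compact support inside $\mathbb{T}^2 \times I_j$, hence admissible via the remark following Definition \ref{TE}. The chain rule for Lipschitz compositions yields $\partial_t \psi + u \cdot \nabla \psi = (\partial_t \phi) \circ y_{t-t_j}^{-1}$ a.e., using $\tfrac{d}{dt} y_{t-t_j}(z) = u(y_{t-t_j}(z), t)$. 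Substituting into the weak formulation of \eqref{eqTE} and changing variables $x = y_{t-t_j}(z)$ (bi-Lipschitz, measure-preserving) gives
\begin{equation*}
\int_{Q \times I_j} f(y_{t-t_j}(z), t) \, \partial_t \phi(z,t) \, dz \, dt = 0
\end{equation*}
for every such $\phi$. By Theorem 3.1.4' in \cite{hormander2003}, this forces $f(y_{t-t_j}(\cdot), \cdot)$ to be distributionally independent of $t \in I_j$, equal to some $g_j^Q \in L^1(Q)$. On the complement of $\bigcup \mathcal{R}_j$, where $u \equiv 0$ throughout $I_j$, the same invocation applied directly to $\phi$ gives that $f$ is independent of $t \in I_j$ there. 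Assembling yields $g_j \in L^1(\mathbb{T}^2)$ with $f(\cdot, t) = g_j \circ y_{t-t_j}^{-1}$ a.e. on $I_j$.

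To complete the proof, I would glue the two phases together. Each Lagrangian piece $g_j \circ y_{t-t_j}^{-1}$ belongs to $C^0(\overline{I_j}; L^1(\mathbb{T}^2))$ by the mollification argument at the end of Proposition \ref{shearuniq}, which depends only on measure-preservation and $1$-Lipschitz continuity in time of the flow (both valid here since $\|u\|_{L^\infty L^\infty} \le 1$). The distributional equation on a neighborhood of $t = 2\cdot 2^{-k}$ rules out any jump in $f$, forcing the two continuous $L^1$-traces to coincide, and hence $g_2 = g_1 \circ y_{2\cdot 2^{-k}}^{-1}$. This identifies $f(\cdot, t) = f_0 \circ y_t^{-1}$ on all of $I$ with $f_0 = g_1$ and $y_t$ the concatenated Lagrangian flow from Definition \ref{lagswap}, giving \eqref{eqswaplag}. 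The renormalisation claim then follows from Remark \ref{lagrenorm}, and the $C^0([0, 3\cdot 2^{-k}]; L^1(\mathbb{T}^2))$ continuity from the same mollification argument applied globally.

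The main obstacle will be that, although $u$ is continuous across rectangle boundaries within each phase (flow lines being tangent to $\partial Q$), the Lagrangian flow $y_t$ is \emph{not} globally Lipschitz on $\mathbb{T}^2$: particles on opposite sides of $\partial Q$ follow distinct nested orbits and end up displaced by roughly the rectangle's diameter by time $2\cdot 2^{-k}$, while their initial separation can be made arbitrarily small. This rules out using a single global test function of the form $\phi \circ y_t^{-1}$ as in the shear case, forcing the change-of-variables step to proceed rectangle by rectangle, with the phase-transition gluing handled as a distinct step via the $C^0 L^1$ regularity of each piece.
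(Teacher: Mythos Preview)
Your proposal is correct and follows essentially the same approach as the paper: reduce to $i=1$ via \eqref{eqi2u}--\eqref{eqi2y}, split into the two autonomous phases $I_1,I_2$, decompose $\mathbb{T}^2$ into the invariant rectangles from \eqref{eqflipdiag}, run the change-of-variables argument of Proposition~\ref{shearuniq} on each rectangle using the bi-Lipschitz bound \eqref{eqlipbound}, establish $C^0L^1$-continuity from the time-Lipschitz property of the flow, and glue the phases via Theorem~\ref{weakcont}. Your closing observation that the flow fails to be globally Lipschitz in space across rectangle boundaries (hence forcing the local argument) is exactly the point, and your description of the rectangle partition is slightly imprecise---the $u\equiv 0$ region has positive measure, so either enlarge $\mathcal{R}_j$ to include it as trivial rectangles or drop the ``plus a null set'' phrasing---but this does not affect the argument.
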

	\begin{proof}
		We shall give only a brief proof since this essentially follows the proof of Proposition \ref{shearuniq}. Alternatively, one may be able to prove the result via the $L^1([0,T]; BV)$-theory of transport developed by Ambrosio in \cite{ambrosio2004transport}.
		
		The case $i=2$ follows from the same result for $i=1$ and the definitions \eqref{eqi2u}, \eqref{eqi2y}. Hence, we may assume $i=1$.
		
		Notice, from the construction \eqref{eqflipdiag}, that $u^{(1,k,n;L)}$ is stationary/time-independent on the time intervals $I_1=[0,2\cdot2^{-k}]$, and $I_2=[2\cdot2^{-k},3\cdot2^{-k}]$.
		
		We claim it is sufficient to prove \eqref{eqswaplag} for some $f_0\in L^1(\mathbb{T}^2)$ for a.e. $t \in I_1$, and for some $f_0' \in L^1(\mathbb{T}^2)$ for a.e. $t \in I_2$. This is because, after we prove the continuity $f_0 \circ \big(y^{(1,k,n;L)}_t\big)^{-1} \in C^0([0,3\cdot2^{-k}];L^1(\mathbb{T}^2))$, we apply Theorem \ref{weakcont} to deduce $f_0=f_0'$.
		
		Restrict now to one of the time intervals $t\in I_1$ or $t\in I_2$. Then we may divide $\mathbb{T}^2$ into open rectangles $Q\subset \mathbb{T}^2$ (and a set of Lebesgue-measure zero between rectangles) on which $u^{(1,k,n;L)}$ is given either by zero,
		$\vcenter{\hbox{\begin{tikzpicture}[scale=1]
					\pgfdeclarelayer{background}
					\pgfsetlayers{background,main}
					\begin{scope}[scale=1/4]
						\draw[red, thick] (-8,0) rectangle (0,1);
						\draw[thick,postaction={decorate},decoration={
							markings,
							mark=at position 1/4 with {\arrow[scale=1]{latex}},
							mark=at position 3/4 with {\arrow[scale=1]{latex}},
						}] (-4,1/2) ellipse (2 and 1/4);
					\end{scope}
		\end{tikzpicture}}}$, or
		$\vcenter{\hbox{\begin{tikzpicture}[scale=1]
					\pgfdeclarelayer{background}
					\pgfsetlayers{background,main}
					\begin{scope}[scale=1/4]
						\draw[red, thick] (-8,0) rectangle (0,1);
						\draw[thick,postaction={decorate},decoration={
							markings,
							mark=at position 1/4 with {\arrowreversed[scale=1]{latex}},
							mark=at position 3/4 with {\arrowreversed[scale=1]{latex}},
						}] (-4,1/2) ellipse (2 and 1/4);
					\end{scope}
		\end{tikzpicture}}}$.
		
		As in the proof of Proposition \ref{shearuniq} we take a test function $\phi \in C_c^\infty(Q \times I_1)$, supported on $Q \times I_1$ (or $Q \times I_2$ respectively), and let $\psi(x, t) = \phi\big(\big(y^{(1,k,n;L)}_t\big)^{-1}(x),t\big)$. From the Lipschitz bound \eqref{eqlipbound}, $\psi\in L^\infty(Q\times I_1)$ is then Lipschitz (so may be taken as a test function in \eqref{eqTE}), supported on $Q$, and by chain rule the following point-wise equality holds
		\begin{equation*}
			\left(\frac{\partial \psi}{\partial t} + u^{(1,k,n;L)}\cdot\nabla \psi\right)(x,t) = \frac{\partial \phi}{\partial t}\left(\big(y^{(1,k,n;L)}_t\big)^{-1}(x),t\right).
		\end{equation*}
		
		Following the argument as in Proposition \ref{shearuniq} then proves \eqref{eqswaplag} holds a.e. on each $Q \times I_1$, for some $f_0$. By gluing together the $f_0$ for each $Q$, we show that \eqref{eqswaplag} holds a.e. on $\mathbb{T}^2 \times I_1$ (or $\mathbb{T}^2 \times I_2$ respectively).
		
		We are left to show $f_0 \circ \big(y^{(1,k,n;L)}_t\big)^{-1} \in C^0([0,3\cdot2^{-k}];L^1(\mathbb{T}^2))$. As in the proof of Proposition \ref{shearuniq}, this will follow from the global Lipschitz in time bound. That is we must show, for all $x \in \mathbb{T}^2$, and $t,s \in [0,3\cdot2^{-k}]$,
		\begin{equation*}
			\left|y^{(1,k,n;L)}_t\circ\big(y^{(1,k,n;L)}_s\big)^{-1}(x) - x \right| \le C |t-s|.
		\end{equation*}
		
		This follows by taking $x=x'=\big(y^{(1,k,n;L)}_s\big)^{-1}(z)$ and applying the local Lipschitz bound \eqref{eqlipbound}.
		
		As discussed, by Theorem \ref{weakcont} we may glue together $\mathbb{T}^2 \times I_1$ and $\mathbb{T}^2 \times I_2$, proving \eqref{eqswaplag}. $f$ is now a Lagrangian solution to \eqref{eqTE} along $u^{(1,k,n;L)}$ with initial data $f_0$, and so a renormalised weak solution by Remark \ref{lagrenorm}.
	\end{proof}
	\vspace{\baselineskip}
	
	Next, we define the times at which to apply a binary swap $y^{(i,k,n;L)}_{3\cdot2^{-k}}$. We use a quadruple index $(k,m,i,n)\in\mathcal{N}\subset\mathbb{N}^4$ where $k,i,n$ shall determine which binary swap $y^{(i,k,n;L)}_{3\cdot2^{-k}}$ to perform, with $L\in\mathbb{N}$, $L\ge k+1$ chosen later, while $m$ denotes the $m$\textsuperscript{th} occurrence of that binary swap. To define the times $t_{(k,m,i,n)}\in[0,1]$ at which the binary swaps will be applied, we first define an ordering $<_\mathrm{time}$ on the indexing set $\mathcal{N}$, and then define $t_{(k,m,i,n)}$ so that they respect this ordering. The resulting time ordering is illustrated in the diagram \eqref{eqtimediag} below, with the ordering designed to iteratively make $4^m$ copies of the initial data on a square lattice with widths $2^{-m}$, thus mixing the initial data to its spatial average as $m\to\infty$.
	
	We additionally define a well-order $<_\mathrm{lex}$ on the indexing set $\mathcal{N}$. This will inform which binary swaps $y^{(i,k,n;L)}_{3\cdot2^{-k}}$ will be included when approximating the vanishing viscosity limit of \eqref{eqADE}, as in Proposition \ref{vcontrol}.
	\begin{definition}[Total-orders $<_{\mathrm{lex}}$, $<_{\mathrm{time}}$]\label{totalorders}
		For any $p \in \mathbb{N}$, we define the lexicographic well-order $<_{\mathrm{lex}}$ on $\mathbb{N}^p$ via
		\begin{equation*}
			a <_{\mathrm{lex}} b \iff (\exists i \in \{1,...,p\}), \; \begin{cases}
				(\forall j < i), \; a_j = b_j, \\
				a_i < b_i.
			\end{cases}
		\end{equation*}
		
		Let
		\begin{equation*}
			\mathcal{N}=\left\{(k,m,i,n) \in \mathbb{N}^4:\;m\le k,i\in\{1,2\},\;n\le 2^{\left\lfloor k/2\right\rfloor}\right\},
		\end{equation*}
		then note that the well-order $(\mathcal{N},<_{\mathrm{lex}})$ is order isomorphic to $\mathbb{N}$ with the usual order.
		
		We also define another total-order $<_{\mathrm{time}}$ on $\mathcal{N}$, via
		\begin{equation}\label{eqtorder}
			(k_1,m_1,i_1,n_1) <_{\mathrm{time}} (k_2,m_2,i_2,n_2) \text{ if } (m_1,k_2,i_2,n_2) <_{\mathrm{lex}} (m_2,k_1,i_1,n_1),
		\end{equation}
		which is not a well-order.
		
		Define also for each $\mathcal{K} \in \mathcal{N}$ the finite subset
		\begin{equation*}
			\mathcal{N_K} = \{(k,m,i,n) \in \mathcal{N}: (k,m,i,n) \le_{\mathrm{lex}} \mathcal{K}\},
		\end{equation*}
		
		which inherits the finite (and therefore well-) orders $(\mathcal{N_K},<_{\mathrm{lex}})$, $(\mathcal{N_K},<_{\mathrm{time}})$.
		
		For each $(k,m,i,n) \in \mathcal{N}$ we define
		\begin{equation}\label{eqTkn}
			T_{(k,m,i,n)} = \sum_{(k',m',i',n')<_{\mathrm{time}}(k,m,i,n)}3\cdot2^{-k'} < 42,
		\end{equation}
		where the bound on $T_{(k,m,i,n)}$ follows from direct calculation of \[\sum_{k\in\mathbb{N}} 2k\cdot2^{\left\lfloor k/2\right\rfloor}\cdot3\cdot2^{-k} = 42,\] and an empty sum is zero.
		
		Moreover, for each $m \in \mathbb{N}$, let
		\begin{equation}\label{eqtm}
			T_m = \sum_{\substack{m' \le m \\ (k',m',i',n') \in \mathcal{N}}}3\cdot2^{-k'}<42.
		\end{equation}
		
		Writing also $T_0 = 0$, we illustrate the time ordering $<_\mathrm{time}$ in the following diagram,
		\begin{equation}\label{eqtimediag}
			\vcenter{\hbox{
					\begin{tikzpicture}[scale=0.86]
						\pgfdeclarelayer{background}
						\pgfsetlayers{background,main}
						\begin{scope}[scale=2/3]
							\draw[] (0,0) -- (18.5,0);
							\node[anchor=south,yshift=7mm] (t0) at (0,0) {$T_0$};
							\draw[] (0,-1) -- (0,1);
							\node[anchor=south,yshift=7mm] (t1) at (8,0) {$T_1$};
							\draw[] (8,-1) -- (8,1);
							\node[anchor=south,yshift=7mm] (t2) at (14,0) {$T_2$};
							\draw[] (14,-1) -- (14,1);
							\node[anchor=south,yshift=7mm] (t3) at (18,0) {$T_3$};
							\draw[] (18,-1) -- (18,1);
							\draw[thick,decorate,decoration = {
								calligraphic brace,
								raise=5pt,
								amplitude=5pt,
								aspect=0.5
							}] (6,0) -- (8,0) node[pos=0.5,above=7pt,black,font=\tiny]{$T_{(1,1,i,n)}$};
							\draw[] (6,-0.2) -- (6,0.2);
							\draw[thick,decorate,decoration = {
								calligraphic brace,
								raise=5pt,
								amplitude=5pt,
								aspect=0.5
							}] (6,0) -- (4,0) node[pos=0.5,below=7pt,black,font=\tiny]{$T_{(2,1,i,n)}$};
							\draw[] (4,-0.2) -- (4,0.2);
							\draw[thick,decorate,decoration = {
								calligraphic brace,
								raise=5pt,
								amplitude=5pt,
								aspect=0.5
							}] (3,0) -- (4,0) node[pos=0.5,above=7pt,black,font=\tiny]{$T_{(3,1,i,n)}$};
							\draw[] (3,-0.2) -- (3,0.2);
							\draw[thick,decorate,decoration = {
								calligraphic brace,
								raise=5pt,
								amplitude=5pt,
								aspect=0.5
							}] (3,0) -- (2,0) node[pos=0.5,below=7pt,black,font=\tiny]{$T_{(4,1,i,n)}$};
							\draw[] (2,-0.2) -- (2,0.2);
							\draw[] (1.5,-0.2) -- (1.5,0.2);
							\draw[] (1,-0.2) -- (1,0.2);
							\node[anchor=south,font=\small] (l1) at (0.5,0) {$...$};
							\draw[thick,decorate,decoration = {
								calligraphic brace,
								raise=5pt,
								amplitude=5pt,
								aspect=0.5
							}] (14,0) -- (12,0) node[pos=0.5,below=7pt,black,font=\tiny]{$T_{(2,2,i,n)}$};
							\draw[] (12,-0.2) -- (12,0.2);
							\draw[thick,decorate,decoration = {
								calligraphic brace,
								raise=5pt,
								amplitude=5pt,
								aspect=0.5
							}] (11,0) -- (12,0) node[pos=0.5,above=7pt,black,font=\tiny]{$T_{(3,2,i,n)}$};
							\draw[] (11,-0.2) -- (11,0.2);
							\draw[thick,decorate,decoration = {
								calligraphic brace,
								raise=5pt,
								amplitude=5pt,
								aspect=0.5
							}] (11,0) -- (10,0) node[pos=0.5,below=7pt,black,font=\tiny]{$T_{(4,2,i,n)}$};
							\draw[] (10,-0.2) -- (10,0.2);
							\draw[] (9.5,-0.2) -- (9.5,0.2);
							\draw[] (9,-0.2) -- (9,0.2);
							\node[anchor=south,font=\small] (l2) at (8.5,0) {$...$};
							\draw[thick,decorate,decoration = {
								calligraphic brace,
								raise=5pt,
								amplitude=5pt,
								aspect=0.5
							}] (17,0) -- (18,0) node[pos=0.5,above=7pt,xshift=-2.5mm,black,font=\tiny]{$T_{(3,3,i,n)}$};
							\draw[] (17,-0.2) -- (17,0.2);
							\draw[thick,decorate,decoration = {
								calligraphic brace,
								raise=5pt,
								amplitude=5pt,
								aspect=0.5
							}] (17,0) -- (16,0) node[pos=0.5,below=7pt,black,font=\tiny]{$T_{(4,3,i,n)}$};
							\draw[] (16,-0.2) -- (16,0.2);
							\draw[] (15.5,-0.2) -- (15.5,0.2);
							\draw[] (15,-0.2) -- (15,0.2);
							\node[anchor=south,font=\small] (l3) at (14.5,0) {$...$};
							\node[anchor=west] (l4) at (18.5,0) {$...$};
						\end{scope}
					\end{tikzpicture}
			}}
		\end{equation}
		where for fixed $k,m \in \mathbb{N}$ with $m \le k$, the bracket \begin{tikzpicture}
			\draw[thick,decorate,decoration = {
				calligraphic brace,
				raise=0pt,
				amplitude=5pt,
				aspect=0.5
			}] (0,0) -- (1,0) node[pos=0.5,above=2pt,black,font=\tiny]{$T_{(k,m,i,n)}$};
		\end{tikzpicture} contains for all $i\in\{1,2\}$, $n\in\mathbb{N}$ with $n \le 2^{\left\lfloor k/2\right\rfloor}$, precisely the time intervals
		\begin{equation*}
			\left[T_{(k,m,i,n)},T_{(k,m,i,n)}+3\cdot2^{-k}\right).
		\end{equation*}
	\end{definition}
	\vspace{\baselineskip}
	
	Next, we apply the divergence-free vector fields $u^{\left(i,k,n;L\right)}$ (for some $L\in\mathbb{N}$ to be chosen) in each time interval $\left[T_{(k,m,i,n)},T_{(k,m,i,n)}+3\cdot2^{-k}\right)$, with $m\in\{1,...,k\}$ an index denoting the $m$\textsuperscript{th} occurrence of this swap. The order is chosen so that the solution to \eqref{eqTE} at time $t=T_m$ creates $4^m$ identical copies of the initial data on a square lattice with widths $2^{-m}$.
	
	To see how this can be achieved, $i\in\{1,2\}$ denotes which coordinate the binary swap \eqref{eqbinaryswap} acts on, while $n\in\{1,...,2^{\left\lfloor k/2\right\rfloor}\}$ is an integer denoting which region of $\mathbb{T}^2$ the binary swap is completed on.
	
	For fixed $k\in\mathbb{N}$, $m\in\{1,...,k\}$, these binary swap commute, and together the bracket \begin{tikzpicture}
		\draw[thick,decorate,decoration = {
			calligraphic brace,
			raise=0pt,
			amplitude=5pt,
			aspect=0.5
		}] (0,0) -- (1,0) node[pos=0.5,above=2pt,black,font=\tiny]{$T_{(k,m,i,n)}$};
	\end{tikzpicture} swaps the $k$\textsuperscript{th} and $(k+1)$\textsuperscript{th} binary digit of both coordinates of every $x=(x_1,x_2)\in\mathbb{T}^2$.
	
	The order $\le_\mathrm{lex}$ is chosen such that between $T_{m-1}$ and $T_m$, an `undefined' binary digit passes all the way from infinitely far up the binary expansion, down to the $m$\textsuperscript{th} position. Since its values are undefined, this creates four copies (2 in each coordinate) of the $t=T_{m-1}$ data at $t=T_m$. See Proposition \ref{mixinglemma} below for the details.
	
	But first, we define this vector field and its finite approximations, which by Theorem \ref{vcontrol} will control the vanishing viscosity limit of \eqref{eqADE}.
	\begin{definition}[Gradual perfect mixing]\label{perfectmixing}
		We follow the notation introduced in Definition \ref{totalorders}.
		
		Notice that a particular $\mathcal{P}=(k,m,i,n)\in\mathcal{N}$ fixes $k\in\mathbb{N}$, $i\in\{1,2\}$, $n\in\{1,...,2^{\left\lfloor k/2\right\rfloor}\}$.
		
		For $\mathcal{K} \in \mathcal{N}$, and a finite sequence $\{L_{\mathcal{P}}\}_{\mathcal{P} \in \mathcal{N_K}}\subset\mathbb{N}$ with $L_{(k,m,i,n)} \ge k+1$ for all $(k,m,i,n)\in\mathcal{N_K}$, we define the following time-dependent vector field.
		\begin{gather*}
			u_{\mathcal{K}}^{\{L_\mathcal{P}\}_{\mathcal{P}\in\mathcal{N_K}}}:\mathbb{T}^2 \times [0,50] \to \mathbb{R}^2, \\
			u_{\mathcal{K}}^{\{L_\mathcal{P}\}_{\mathcal{P}\in\mathcal{N_K}}}(x,t) = \begin{cases}
				u^{\left(i,k,n;L_\mathcal{P}\right)}(x,t-T_\mathcal{P}) & \text{for } \begin{cases}
					\mathcal{P} = (k,m,i,n) \in \mathcal{N_K}, \text{ and} \\ t \in \left[T_\mathcal{P},T_\mathcal{P}+3\cdot2^{-k}\right),
				\end{cases} \\
				0 & \text{otherwise, in particular for $t \ge 42$}.
			\end{cases}
		\end{gather*}
		
		Meanwhile, for an infinite sequence $\{L_\mathcal{P}\}_{\mathcal{P} \in \mathcal{N}}\subset\mathbb{N}$ with $L_{(k,m,i,n)} \ge k+1$ for all $(k,m,i,n)\in\mathcal{N}$, we define the following time-dependent vector field.
		\begin{gather*}
			u_{\infty}^{\{L_\mathcal{P}\}_{\mathcal{P}\in\mathcal{N}}}:\mathbb{T}^2 \times [0,50] \to \mathbb{R}^2, \\
			u_{\infty}^{\{L_\mathcal{P}\}_{\mathcal{P}\in\mathcal{N}}}(x,t) = \begin{cases}
				u^{\left(i,k,n;L_\mathcal{P}\right)}(x,t-T_\mathcal{P}) & \text{for } \begin{cases}
					\mathcal{P}=(k,m,i,n) \in \mathcal{N}, \text{ and} \\ t \in \left[T_\mathcal{P},T_\mathcal{P}+3\cdot2^{-k}\right),
				\end{cases} \\
				0 & \text{otherwise, in particular for $t \ge 42$}.
			\end{cases}
		\end{gather*}
		
		These vector fields are well-defined since by \eqref{eqTkn}, the time intervals
		\begin{equation*}
			\left[T_{(k,m,i,n)},T_{(k,m,i,n)}+3\cdot2^{-k}\right),
		\end{equation*}
		are disjoint subsets of $[0,42]$.
	\end{definition}
	
	\begin{proposition}[Mixing]\label{mixinglemma}
		We follow the notation introduced in Definitions \ref{totalorders}, \ref{perfectmixing}.
		
		Let $f_0 \in L^\infty(\mathbb{T}^2)$, and suppose we have an infinite sequence $\{L_\mathcal{P}\}_{\mathcal{P}\in\mathcal{N}}\subset\mathbb{N}$ with $L_{(k,m,i,n)} \ge k+1$ for all $(k,m,i,n)\in\mathcal{N}$. Then for each $\mathcal{K} \in \mathcal{N}$ there exists a unique weak solution $f^\mathcal{K}$ to \eqref{eqTE} along $u_{\mathcal{K}}^{\{L_\mathcal{P}\}_{\mathcal{P}\in\mathcal{N_K}}}$ with initial data $f_0$. Moreover, $f^\mathcal{K}\in (C^0L^1) \cap (L^\infty L^\infty)$, is a Lagrangian solution, and hence also a renormalised weak solution (Definitions \ref{renormalised}, \ref{lagrangian}).
		
		By the well-order $(\mathcal{N},<_{\mathrm{lex}})$ (which is order isomorphic to $(\mathbb{N},<)$), as $\mathcal{K} \xrightarrow{\mathrm{lex}} \infty$,
		\begin{equation*}
			f^\mathcal{K}\xrightarrow{\mathcal{K}\to\infty}f^\infty,
		\end{equation*}
		with the above convergence in weak-$*$ $L^\infty L^\infty$, strong in $L^p([0,42]; L^p)$ and $\left.C^0([0,42-\epsilon];L^p)\right.$ for all $p \in [1,\infty)$, and all $\epsilon > 0$. The limit function $f^\infty \in C_{\mathrm{weak-}*}^0L^\infty$ is a weak solution to \eqref{eqTE} along $u_{\infty}^{\{L_\mathcal{P}\}_{\mathcal{P}\in\mathcal{N}}}$ with initial data $f_0$.
		
		Moreover, for all $t \in [42,50]$
		\begin{equation}\label{eqmixed}
			f^\infty(\cdot,t) \equiv \int_{\mathbb{T}^2} f_0(y) \; dy,
		\end{equation}
		i.e., $f^\infty(\cdot, t)$ is perfectly mixed after $t=42$.
	\end{proposition}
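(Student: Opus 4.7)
The uniqueness and Lagrangian structure of each $f^\mathcal{K}$ follow the strategy of Proposition \ref{shearsolconv}. Since $u_\mathcal{K}^{\{L_\mathcal{P}\}_{\mathcal{P}\in\mathcal{N_K}}}$ is a finite, disjoint concatenation in time of binary-swap velocity fields $u^{(i,k,n;L_\mathcal{P})}$ separated by intervals on which it vanishes, we combine Proposition \ref{swapuniq} (uniqueness on each active interval) with the fact that weak solutions to \eqref{eqTE} along $u\equiv 0$ are time-constant; gluing these pieces via Theorem \ref{weakcont} produces a Lagrangian flow $\tilde{y}^\mathcal{K}$ obtained by composing the swap maps $y^{(i,k,n;L_\mathcal{P})}_{3\cdot 2^{-k}}$ in time order. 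Then $f^\mathcal{K}(\cdot,t) = f_0\circ(\tilde{y}^\mathcal{K}_t)^{-1} \in C^0L^1\cap L^\infty L^\infty$ is the unique weak solution with initial data $f_0$, and is renormalised by Remark \ref{lagrenorm}.

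The convergence rests on a combinatorial identity for the flow at time $T_m$. Within the block $[T_{m-1},T_m]$, the ordering of Definition \ref{totalorders} schedules the swaps with fixed $m$ in order of decreasing $k$. For each fixed $k$, swaps with different $n$ commute (they act on the disjoint strips $\{x : x_i \in J_{k,n}\}$) and those with different $i$ commute (they act on different coordinates); by \eqref{eqbinaryswap}, their composition is precisely the transposition of the $k$-th and $(k+1)$-th binary digits of both coordinates of every $x\in\mathbb{T}^2$. Performing these transpositions for $k = K, K-1, \ldots, m$ realises the cyclic digit-shift on positions $\{m,\ldots,K+1\}$ that sends position $K+1\mapsto m$ and $j\mapsto j+1$ for $m\le j\le K$. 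Chaining this identity over $m'=1,2,\ldots,m$ and letting $K\to\infty$ in the lex limit, the inverse $(\tilde{y}^\mathcal{K}_{T_m})^{-1}(x)$ converges almost everywhere (with error of order $O(2^{-K})$) to the $m$-fold coordinate doubling map $D_m(x) := (2^m x_1 \bmod 1,\, 2^m x_2 \bmod 1)$.

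Dominated convergence with $f_0\in L^\infty$ then gives $f^\mathcal{K}(\cdot,T_m)\to f_0\circ D_m$ strongly in every $L^p(\mathbb{T}^2)$, $p<\infty$. Applying the same digit-shift analysis on each active subinterval composed with the in-progress swap flow (Lipschitz in time by Definition \ref{lagswap}) extends the pointwise convergence to uniform-in-$t$ convergence on $[0,42-\epsilon]$, hence $C^0([0,42-\epsilon];L^p)$ convergence; dominated convergence in time yields $L^p([0,42];L^p)$ convergence, and the uniform $L^\infty L^\infty$ bound gives weak-$*$ convergence on $[0,50]$. That the limit $f^\infty$ is a weak solution to \eqref{eqTE} along $u_\infty^{\{L_\mathcal{P}\}_{\mathcal{P}\in\mathcal{N}}}$ follows by passing to the limit in the weak formulation, which is justified since $u_\mathcal{K}\to u_\infty$ strongly in $L^1([0,42];L^\infty)$: their difference is supported on a time set of measure $\sum_{\mathcal{P}\notin\mathcal{N_K}}3\cdot 2^{-k}$, which vanishes in the lex limit.

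Finally, \eqref{eqmixed} follows from $f^\infty(\cdot,T_m) = f_0\circ D_m$: a standard Fourier/equidistribution argument shows $f_0\circ D_m \xrightharpoonup{m\to\infty} \int_{\mathbb{T}^2} f_0$ in weak-$*$ $L^\infty(\mathbb{T}^2)$, and since $T_m\uparrow 42$ and $f^\infty\in C^0_{\mathrm{weak-}*}L^\infty$, this forces $f^\infty(\cdot,42)\equiv\int f_0$; the vanishing of $u_\infty$ on $[42,50]$ propagates this constant throughout. The main obstacle lies in the second paragraph: verifying the cyclic digit-shift identity exactly at the times $T_m$, and then extending to uniform-in-$t$ control on the transitional intervals where only part of the $m$-block of swaps has been completed, so that the convergence on $[0,42-\epsilon]$ is truly uniform in $t$ rather than only pointwise.
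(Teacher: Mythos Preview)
Your outline is sound and close to the paper's: the identification of the flow at $T_m$ with (an approximation of) the doubling map $D_m$ is exactly \eqref{eqbinaryexp}, and the route to \eqref{eqmixed} via $f_0\circ D_m\rightharpoonup\int f_0$ matches. The passage to the limit in the weak formulation via $u_\mathcal{K}\to u_\infty$ in $L^1_tL^\infty_x$ is a legitimate alternative to the paper's pointwise-in-$t$ eventual-constancy argument.

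The genuine gap is exactly where you flag it: uniform-in-$t$ convergence on $[T_{m-1},T_m]$. As $\mathcal{K}$ grows, swaps with arbitrarily large $k$ are inserted near the \emph{start} of the $m$-block (larger $k$ means earlier in time within the block), so for fixed $t$ the flow $\tilde{y}^\mathcal{K}_t$ is perturbed by an unbounded composition of small-scale maps; Lipschitz-in-time of a single swap does not control this. The ingredient you are missing is the square-preservation property \eqref{eqrectangle}: every swap with parameter $k'$ maps each $2^{1-k'}$-square to itself, so the composite flow on the early portion $[T_{m-1},T_{(k,m,1,1)}]$ preserves $2^{1-k}$-squares (this is \eqref{eqsquare}), while on $[T_{(k,m,1,1)},T_m]$ the flow is eventually frozen once $\mathcal{K}\ge_{\mathrm{lex}}(k,m,1,1)$ (this is \eqref{equnchanged}). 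The paper then runs an inductive Cauchy argument on $m$: smooth the inductive datum $f_{m-1}=\lim f^\mathcal{K}(\cdot,T_{m-1})$ by $\phi_\epsilon$, use square preservation to bound the difference of compositions by $O(\|\nabla\phi_\epsilon\|_\infty 2^{1-k})$ uniformly in $t$, and let $k\to\infty$ then $\epsilon\to0$. The same smoothing is also needed for your dominated-convergence step at $T_m$: since $f_0\in L^\infty$ need not be continuous, a.e.\ convergence of flow maps does not directly give a.e.\ convergence of the compositions; one approximates $f_0$ by smooth $\phi_\epsilon$ and uses that both $(\tilde{y}^\mathcal{K}_{T_m})^{-1}$ and $D_m$ push forward Lebesgue measure to itself, as in \eqref{eqsmoothaprox}--\eqref{eqzmbound}.
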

	\begin{proof}
		Recall the language and notation introduced in Definitions \ref{lagswap}, \ref{totalorders}, \ref{perfectmixing}, as well as Definitions \ref{renormalised}, \ref{lagrangian} of renormalised and Lagrangian solutions to \eqref{eqTE}.
		
		By Theorems \ref{weakcont}, \ref{TEexistence}, there exists some weak solution $f^\mathcal{K} \in C_{\mathrm{weak-}*}^0L^\infty$ to \eqref{eqTE} along $u_{\mathcal{K}}^{\{L_\mathcal{P}\}_{\mathcal{P}\in\mathcal{N_K}}}$ with initial data $f_0$.

		We aim to show $f \in C^0 L^1$ with
		\begin{equation}\label{eqlagexpr}
			f^\mathcal{K}(\cdot, t) = f_0 \circ \big(\tilde{y}_t^{\mathcal{K}}\big)^{-1},
		\end{equation}
		for $\big\{\tilde{y}_t^{\mathcal{K}}\big\}_{t \in [0,50]}$ a Lagrangian flow along $u_{\mathcal{K}}^{\{L_\mathcal{P}\}_{\mathcal{P}\in\mathcal{N_K}}}$ which does not depend on $f_0$, thus proving uniqueness.
		
		From the definition of $u_{\mathcal{K}}^{\{L_\mathcal{P}\}_{\mathcal{P}\in\mathcal{N_K}}}$, finiteness of the set $\mathcal{N_K}$, and disjointness of the intervals $\left[T_{(k,m,i,n)},T_{(k,m,i,n)}+3\cdot2^{-k}\right)$ for all $(k,m,i,n) \in \mathcal{N_K}$, we may piecewise apply Proposition \ref{swapuniq} to $f^\mathcal{K}$. We use that $f^\mathcal{K} \in C_{\mathrm{weak-}*}^0L^\infty$ to glue together the pieces, and that any weak solution to \eqref{eqTE} along $u\equiv 0$ on an open time interval $I$ is a constant function of $t\in I$, see for example Theorem 3.1.4' in \cite{hormander2003}. This constructs for each $\mathcal{K} \in \mathcal{N}$ a Lagrangian flow $\tilde{y}_t^\mathcal{K}$ satisfying \eqref{eqlagexpr}, and moreover gives an expression for $\tilde{y}_t^\mathcal{K}$ in terms of the binary swaps $y_t^{(i,k,n;L)}$, see \eqref{eqpiecelag} below.
		
		For each $\mathcal{P}=(k,m,i,n) \in \mathcal{N_K}$, define
		\begin{equation*}
			T_{\mathrm{suc}(\mathcal{P})} = \begin{cases}
				50 & \text{if $\mathcal{P}$ is maximal in $(\mathcal{N_K},<_{\mathrm{time}})$},\;\text{else} \\
				T_{\mathcal{P}'} & \text{for $\mathcal{P}'$ the successor of $\mathcal{P}$ in $(\mathcal{N_K},<_{\mathrm{time}})$}.
			\end{cases}
		\end{equation*}
		
		With each $\mathcal{P}=(k,m,i,n)\in\mathcal{N_K}$ fixed, and therefore fixed $k\in\mathbb{N}$, by \eqref{eqTkn} we have that
		\begin{equation}\label{eqsucbound}
			T_\mathcal{P} + 3\cdot2^{-k} \le T_{\mathrm{suc}(\mathcal{P})}.
		\end{equation}
		
		With each $\mathcal{P}=(k,m,i,n)\in\mathcal{N_K}$ fixed, and therefore fixed $k\in\mathbb{N}$, $i\in\{1,2\}$, $n\in\{1,...,2^{\left\lfloor k/2\right\rfloor}\}$, we have that for all $t \in [T_\mathcal{P}, T_{\mathrm{suc}(\mathcal{P})}]$
		\begin{equation}\label{eqpiecelag}
			\tilde{y}_t^{\mathcal{K}} \circ \big(\tilde{y}_{T_\mathcal{P}}^\mathcal{K}\big)^{-1} = \begin{cases}
				y_{t-T_\mathcal{P}}^{\left(i,k,n;L_\mathcal{P}\right)} & \text{if } t \in [T_\mathcal{P}, T_\mathcal{P} + 3\cdot2^{-k}], \\
				y_{3\cdot2^{-k}}^{\left(i,k,n;L_\mathcal{P}\right)} & \text{if } t \in [T_\mathcal{P} + 3\cdot2^{-k}, T_{\mathrm{suc}(\mathcal{P})}].
			\end{cases} 
		\end{equation}
		
		Meanwhile, if $\mathcal{P}_\mathrm{min}$ is minimal in $(\mathcal{N_K},<_{\mathrm{time}})$, we have that for all $t \in [0,T_{\mathcal{P}_\mathrm{min}}]$, $\tilde{y}_t^{\mathcal{K}} = \mathrm{Id}$. This completes the proof of uniqueness.
		
		Next, we wish to show, for any $m \in \mathbb{N}$, that the sequence $f^\mathcal{K} \in C^0L^1$ is Cauchy in $C^0([T_{m-1},T_m];L^1)$ as $\mathcal{K} \xrightarrow{\mathrm{lex}} \infty$, as follows.
		
		We proceed by induction on $m \in \mathbb{N}$. By the inductive hypothesis we have that $f^\mathcal{K}(\cdot, T_{m-1})\in L^1(\mathbb{T}^2)$ is Cauchy in $L^1(\mathbb{T}^2)$ as $\mathcal{K} \xrightarrow{\mathrm{lex}}\infty$ (for the base case $m=1$ this follows by $f^\mathcal{K}(\cdot, T_0)\equiv f_0$). From this we wish to deduce that $f^\mathcal{K} \in C^0L^1$ is Cauchy in $C^0([T_{m-1},T_m];L^1)$ as $\mathcal{K} \xrightarrow{\mathrm{lex}} \infty$. Before we proceed with the induction we will need some properties of the Lagrangian flow $\tilde{y}_t^{\mathcal{K}}$ on the interval $t\in[T_{m-1},T_m]$.
		
		Fix any $(k,m,i,n) \in \mathcal{N}$, which fixes $k\in\mathbb{N}$, $m\in\{1,...,k\}$, $i\in\{1,2\}$, $n\in\{1,...,2^{\left\lfloor k/2\right\rfloor}\}$.
		
		Recall the definition \eqref{eqtm} of $T_m$ and $T_{m-1}$, where we set $T_0=0$ if necessary. By \eqref{eqtorder}, we have (it may be helpful to consider the illustration \eqref{eqtimediag})
		\begin{gather*}
			T_{m-1} < T_{(k,m,i,n)}, \\
			T_{(k,m,i,n)} + 3\cdot2^{-k} \le T_m,
		\end{gather*}
		and also for each $\mathcal{P} = (k',m',i',n') >_{\mathrm{lex}} (k,m,i,n)$, which fixes $k'\in\mathbb{N}$, $m' \in \{1,...,k'\}$,
		\begin{gather*}
			T_\mathcal{P} + 3\cdot2^{-k'} \le T_{(k,m,i,n)}\quad \text{if } m' \le m, \\
			T_m < T_\mathcal{P}\quad \text{if } m<m'.
		\end{gather*}
		
		Therefore, by the piecewise structure \eqref{eqpiecelag}, for each $t \in [T_{(k,m,i,n)},T_m]$, and for each $\mathcal{K}' >_{\mathrm{lex}} \mathcal{K} \ge_{\mathrm{lex}} (k,m,i,n)$, we have that
		\begin{equation*}
			\tilde{y}_t^{\mathcal{K}'} \circ \big(\tilde{y}_{T_{(k,m,i,n)}}^{\mathcal{K}'}\big)^{-1} = \tilde{y}_t^{\mathcal{K}} \circ \big(\tilde{y}_{T_{(k,m,i,n)}}^{\mathcal{K}}\big)^{-1},
		\end{equation*}
		is unchanged. We shall need the inverse of these Lagrangian maps,
		\begin{equation}\label{equnchanged}
			\tilde{y}_{T_{(k,m,i,n)}}^{\mathcal{K}'} \circ \big(\tilde{y}_{t}^{\mathcal{K}'}\big)^{-1} = \tilde{y}_{T_{(k,m,i,n)}}^{\mathcal{K}} \circ \big(\tilde{y}_t^{\mathcal{K}}\big)^{-1}.
		\end{equation}
		
		Next, for each $\mathcal{P} = (k',m',i',n') \in \mathcal{N}$ with $k' < k$, which fixes $k'\in\mathbb{N}$, $m' \in \{1,...,k'\}$, we have that
		\begin{gather*}
			T_\mathcal{P} +3\cdot 2^{-k'} \le T_{m-1}\quad \text{if } m' < m, \\
			T_{(k,m,i,n)} < T_\mathcal{P}\quad \text{if } m \le m'.
		\end{gather*}
		
		Therefore, we may apply \eqref{eqrectangle} and the piecewise structure \eqref{eqpiecelag} to show the following.
		
		For any $r, r' \in \{1,...,2^{k-1}\}$, denote the intervals $J=[(r-1)2^{1-k},r 2^{1-k}]$, $J'=[(r'-1)2^{1-k},r' 2^{1-k}]$.
		
		For all $\mathcal{K} \in \mathcal{N}$, for all $t \in [T_{m-1},T_{(k,m,i,n)}]$, we see that the Lagrangian-flow $\tilde{y}_t^{\mathcal{K}} \circ \big(\tilde{y}_{T_{m-1}}^{\mathcal{K}}\big)^{-1}$ preserves the square lattice
		\begin{equation}\label{eqsquare}
			\tilde{y}_t^{\mathcal{K}} \circ \big(\tilde{y}_{T_{m-1}}^{\mathcal{K}}\big)^{-1} : J \times J' \leftrightarrow J \times J'.
		\end{equation}
		
		We are now ready to proceed with the induction. Recall we have assumed that for some $m\in\mathbb{N}$, $f^\mathcal{K}(\cdot, T_{m-1})\in L^1(\mathbb{T}^2)$ is Cauchy in $L^1(\mathbb{T}^2)$ as $\mathcal{K} \xrightarrow{\mathrm{lex}}\infty$. 
		
		We wish to deduce $f^\mathcal{K} \in C^0L^1$ is Cauchy in $C^0([T_{m-1},T_m];L^1(\mathbb{T}^2))$ as $\mathcal{K} \xrightarrow{\mathrm{lex}} \infty$.
		
		Denote by $f_{m-1} \in L^1(\mathbb{T}^2)$ the limit $f^\mathcal{K}(\cdot, T_{m-1}) \xrightarrow{\mathcal{K}\to\infty}f_{m-1}$ in $L^1(\mathbb{T}^2)$.
		
		Note, by \eqref{eqlagexpr}, for all $\mathcal{K} \in \mathcal{N}$, and for all $t \in [T_{m-1},T_m]$, we may write
		\begin{equation}\label{eqmlagflow}
			f^{\mathcal{K}}(\cdot, t) = f^{\mathcal{K}}(\cdot, T_{m-1}) \circ \tilde{y}_{T_{m-1}}^{\mathcal{K}} \circ \big(\tilde{y}_t^{\mathcal{K}}\big)^{-1}.
		\end{equation}
		
		Fix some $\epsilon > 0$. Let $\phi_\epsilon\in C^\infty(\mathbb{T}^2)$ be a smooth approximation of $f_{m-1}$ in $L^1(\mathbb{T}^2)$ so that $\left\|\phi_\epsilon - f_{m-1}\right\|_{L^1(\mathbb{T}^2)} \le \epsilon$.
		
		Fixing also some $k \in \mathbb{N}$, let $\mathcal{K} \in \mathcal{N}$ be large enough that we have $\mathcal{K} \ge_\mathrm{lex} (k,m,1,1)$, and that for all $\mathcal{K'} \ge_\mathrm{lex} \mathcal{K}$,
		\begin{equation*}
			\left\|f^\mathcal{K'}(\cdot,T_{m-1}) - f_{m-1}\right\|_{L^1(\mathbb{T}^2)} \le \epsilon.
		\end{equation*}
		
		Then, it also follows that
		\begin{equation*}
			\left\|f^\mathcal{K'}(\cdot, T_{m-1}) - \phi_\epsilon\right\|_{L^1(\mathbb{T}^2)} \le 2\epsilon.
		\end{equation*}
		
		Therefore, by \eqref{eqmlagflow}, and since Lagrangian flows are Lebesgue-measure preserving, for any $\mathcal{K}' \ge_\mathrm{lex} \mathcal{K}$, and for all $t \in [T_{m-1},T_m]$, we have that
		\begin{equation*}
			\left\|f^{\mathcal{K}'}(\cdot, t) - f^{\mathcal{K}}(\cdot, t) \right\|_{L^1(\mathbb{T}^2)} \le 4\epsilon + \left\|\phi_\epsilon \circ \tilde{y}_{T_{m-1}}^{\mathcal{K}'} \circ \big(\tilde{y}_t^{\mathcal{K}'}\big)^{-1} - \phi_\epsilon \circ \tilde{y}_{T_{m-1}}^{\mathcal{K}} \circ \big(\tilde{y}_t^{\mathcal{K}}\big)^{-1} \right\|_{L^1(\mathbb{T}^2)}.
		\end{equation*}
		
		We now split into two cases, $t \in [T_{m-1},T_{(k,m,1,1)}]$, and $t \in [T_{(k,m,1,1)},T_m]$. We first consider the former.
		
		For the square lattice of widths $2^{1-k}$, $J\times J'\subset\mathbb{T}^2$ defined in \eqref{eqsquare}, we have for all $x,y\in J \times J'$,
		\begin{align*}
			\left|\phi_\epsilon(x) - \phi_\epsilon(y) \right| & \le \left\|\nabla \phi_\epsilon\right\|_{L^\infty(\mathbb{T}^2)} |x-y| \\
			& \le \sqrt{2} \left\|\nabla \phi_\epsilon\right\|_{L^\infty(\mathbb{T}^2)} 2^{1-k}.
		\end{align*}
		
		So by \eqref{eqsquare}, for all $t \in [T_{m-1},T_{(k,m,1,1)}]$, and for all $\mathcal{K'} \ge_\mathrm{lex} \mathcal{K}$, we have that
		\begin{equation}\label{eqs6e1}
			\left\|\phi_\epsilon \circ \tilde{y}_{T_{m-1}}^{\mathcal{K}'} \circ \big(\tilde{y}_t^{\mathcal{K}'}\big)^{-1} - \phi_\epsilon \circ \tilde{y}_{T_{m-1}}^{\mathcal{K}} \circ \big(\tilde{y}_t^{\mathcal{K}}\big)^{-1} \right\|_{L^1(\mathbb{T}^2)} \le \sqrt{2}\left\|\nabla\phi_\epsilon\right\|_{L^\infty(\mathbb{T}^2)}2^{1-k}.
		\end{equation}
		
		Next, by \eqref{equnchanged}, for all $t \in [T_{(k,m,1,1)},T_m]$, we have that
		\begin{multline*}
			\left\|\phi_\epsilon \circ \tilde{y}_{T_{m-1}}^{\mathcal{K}'} \circ \big(\tilde{y}_t^{\mathcal{K}'}\big)^{-1} - \phi_\epsilon \circ \tilde{y}_{T_{m-1}}^{\mathcal{K}} \circ \big(\tilde{y}_t^{\mathcal{K}}\big)^{-1} \right\|_{L^1(\mathbb{T}^2)} \\
			\begin{aligned}[t]
				& = \left\|\phi_\epsilon \circ \tilde{y}_{T_{m-1}}^{\mathcal{K}'} \circ \big(\tilde{y}_{T_{(k,m,1,1)}}^{\mathcal{K}'}\big)^{-1} \circ \tilde{y}_{T_{(k,m,1,1)}}^{\mathcal{K}} \circ \big(\tilde{y}_t^{\mathcal{K}}\big)^{-1} - \phi_\epsilon \circ \tilde{y}_{T_{m-1}}^{\mathcal{K}} \circ \big(\tilde{y}_t^{\mathcal{K}}\big)^{-1} \right\|_{L^1(\mathbb{T}^2)} \\
				& = \left\|\phi_\epsilon \circ \tilde{y}_{T_{m-1}}^{\mathcal{K}'} \circ \big(\tilde{y}_{T_{(k,m,1,1)}}^{\mathcal{K}'}\big)^{-1} - \phi_\epsilon \circ \tilde{y}_{T_{m-1}}^{\mathcal{K}} \circ \big(\tilde{y}_{T_{(k,m,1,1)}}^{\mathcal{K}}\big)^{-1} \right\|_{L^1(\mathbb{T}^2)},
			\end{aligned}
		\end{multline*}
		where the last line is already bounded in \eqref{eqs6e1}, by $\sqrt{2}\left\|\nabla\phi_\epsilon\right\|_{L^\infty(\mathbb{T}^2)}2^{1-k}$. Putting everything together, we see that
		\begin{equation*}
			\left\|f^{\mathcal{K}'} - f^{\mathcal{K}} \right\|_{L^\infty([T_{m-1},T_m];L^1(\mathbb{T}^2))} \le 4\epsilon + \sqrt{2}\left\|\nabla\phi_\epsilon\right\|_{L^\infty(\mathbb{T}^2)}2^{1-k}.
		\end{equation*}
		
		$\left\|\nabla\phi_\epsilon\right\|_{L^\infty(\mathbb{T}^2)}$ depends only on $\epsilon$ and $f_{m-1}$, and in particular not on $k$. Thus for $k$ sufficiently large, i.e. for $\mathcal{K'},\mathcal{K} \in (\mathcal{N},<_\mathrm{lex})$ sufficiently large, we can make the right-hand side arbitrarily small. Therefore $f^\mathcal{K}$ is Cauchy in $L^\infty([T_{m-1},T_m];L^1(\mathbb{T}^2))$ as $\mathcal{K} \xrightarrow{\mathrm{lex}} \infty$, as required.
		
		This completes the induction. Observe that $T_m \xrightarrow{m\to\infty}42$, and so we have proven that $f^\mathcal{K}$ converges in $C^0([0,42-\epsilon];L^1)$ as $\mathcal{K} \xrightarrow{\mathrm{lex}} \infty$, for any $\epsilon > 0$.
		
		$f^\mathcal{K}$ are Lagrangian (and weak) solutions to \eqref{eqTE} along $u_{\mathcal{K}}^{\{L_\mathcal{P}\}_{\mathcal{P}\in\mathcal{N}}}$ with initial data $f_0$. Therefore, since $f_0 \in L^\infty(\mathbb{T}^2)$, $f^\mathcal{K}$ are uniformly bounded in $L^\infty L^\infty$. 
		
		Moreover, by Definition \ref{perfectmixing}, $u_{\mathcal{K}}^{\{L_\mathcal{P}\}_{\mathcal{P}\in\mathcal{N_K}}}(\cdot, t)$ is eventually constant, and equal to $u_{\infty}^{\{L_\mathcal{P}\}_{\mathcal{P}\in\mathcal{N}}}(\cdot, t)$ as $\mathcal{K}\xrightarrow{\mathrm{lex}} \infty$ for each $t \in [0,50]$, and are uniformly bounded in $L^\infty L^\infty$.
		
		Denote by $\bar{f}$ a weak-$*$ limit point of $\{f^\mathcal{K}\}_{\mathcal{K}\in\mathcal{N}}$ in $L^\infty([0,50];L^\infty)$, that is there exists a subsequence $\mathcal{K}_n \xrightarrow{n\to\infty} \infty$ with $f^{\mathcal{K}_n}\xrightarrow{n\to\infty}\bar{f}$ in weak-$*$ $L^\infty L^\infty$. Then for all $\phi \in C_c^\infty(\mathbb{T}^2 \times [0,50))$, we see that
		\begin{align*}
			& \int_{\mathbb{T}^2\times[0,50]} \bar{f} \left(\frac{\partial \phi}{\partial t} + u_{\infty}^{\{L_\mathcal{P}\}_{\mathcal{P}\in\mathcal{N}}}\cdot\nabla \phi\right) \; dxdt \\
			& = \lim_{n\to\infty} \int_{\mathbb{T}^2\times[0,50]} f^{\mathcal{K}_n} \left(\frac{\partial \phi}{\partial t} + u_{\infty}^{\{L_\mathcal{P}\}_{\mathcal{P}\in\mathcal{N}}}\cdot\nabla \phi\right) \; dxdt \\
			& = \lim_{n\to\infty} \int_{\mathbb{T}^2\times[0,50]} f^{\mathcal{K}_n} \left(\frac{\partial \phi}{\partial t} + u_{\mathcal{K}_n}^{\{L_\mathcal{P}\}_{\mathcal{P}\in\mathcal{N}_{\mathcal{K}_n}}}\cdot\nabla \phi\right) \; dxdt \\
			& = - \int_{\mathbb{T}^2} f_0 \phi_0 \; dx.
		\end{align*}
		
		Therefore, $\bar{f}$ is a weak solution to \eqref{eqTE} along $u_{\infty}^{\{L_\mathcal{P}\}_{\mathcal{P}\in\mathcal{N}}}$ with initial data $f_0$. Moreover, it is bounded in $L^\infty L^\infty$ and so by Theorem \ref{weakcont} the limit is in $C_{\mathrm{weak-}*}^0([0,50];L^\infty)$. However, $u_{\infty}^{\{L_\mathcal{P}\}_{\mathcal{P}\in\mathcal{N}}}(\cdot, t) \equiv 0$ for each $t \in [42,50]$ and so (say by \eqref{eqtrace}), for all $t\in[42,50]$, $\bar{f}(\cdot, t)$ is determined by $\bar{f}(\cdot, s)$ for $s\in[0,42)$.
		
		However, we have already shown $\underset{\mathcal{K}\to\infty}{\lim}f^\mathcal{K}$ converges in $C^0([0,42-\epsilon];L^1)$ for any $\epsilon > 0$. Therefore, the limit $\bar{f}$ is unique. Assume then that $\underset{\mathcal{K}\to\infty}{\lim} f^\mathcal{K}$ does not converge in weak-$*$ $L^\infty([0,50];L^\infty)$. Then, by the uniform bound in $L^\infty([0,50];L^\infty)$, there exist at least two limit points, contradicting uniqueness.
		
		Denote now the limit by $f^\infty$, that is $f^\mathcal{K}\xrightarrow{\mathcal{K}\to\infty}f^\infty$ in weak-$*$ $L^\infty([0,50];L^\infty)$ and strongly in $C^0([0,42-\epsilon];L^p)$ for all $\epsilon > 0$.
		
		We are left to show convergence in $L^p([0,42];L^p)$ and $C^0([0,42-\epsilon];L^p)$ for all $p \in [1,\infty)$, and all $\epsilon >0$. The latter of these follows by interpolation between convergence in $C^0([0,42-\epsilon];L^1)$ and the existing uniform bound in $L^\infty L^\infty$. When combined with the uniform bound in $L^2([0,42];L^2)$ this further implies convergence of the norm $\left\|f^\mathcal{K}\right\|_{L^2([0,42];L^2)}$. Convergence in $L^2([0,42];L^2)$ then follows from the already proved weak convergence in $L^2([0,42];L^2)$. The analogous result for $p\in[1,2)$ follows from compactness of the domain $\mathbb{T}^2 \times [0,42]$. Moreover, the convergence for $p \in (2,\infty)$ follows by interpolation with the existing uniform bound in $L^\infty([0,42]; L^\infty)$.
		
		Finally, we show the mixing formula \eqref{eqmixed}.
		
		For $K\in\mathbb{N}$ denote by $f^K = f^{(K,K,2,2^{\left\lfloor K/2\right\rfloor})}$, $\mathcal{N}_K = \mathcal{N}_{(K,K,2,2^{\left\lfloor K/2\right\rfloor})}$, and by $\tilde{y}_t^K = \tilde{y}_t^{(K,K,2,2^{\left\lfloor K/2\right\rfloor})}$.
		
		By \eqref{eqtorder}, observe that $(k,m,i,n) \in \mathcal{N}_K$ if and only if $k \le K$, and moreover $(K,K,2,2^{\left\lfloor K/2\right\rfloor}) \xrightarrow{\mathrm{lex}} \infty$ as $K\to\infty$.
		
		Fix $k,m\in\mathbb{N}$ with $k \le K$, $m\in\{1,...,k\}$.
		For all $i \in \{1,2\}$, for all $n\in\{1,...,2^{\left\lfloor k/2\right\rfloor}\}$, we see that $(k,m,i,n) \in \mathcal{N}_K$, and so consider the maps
		\begin{equation*}
			\tilde{y}_{T_{(k,m,i,n)}+3\cdot2^{-k}}^K\circ\big(\tilde{y}_{T_{(k,m,i,n)}}^K\big)^{-1}.
		\end{equation*}
		
		By \eqref{eqbinaryswap}, \eqref{eqsucbound}, \eqref{eqpiecelag}, these maps commute for all $i \in \{1,2\}$, for all $n\in\{1,...,2^{\left\lfloor k/2\right\rfloor}\}$.
		
		Moreover, by \eqref{eqtorder}, \eqref{eqTkn} (illustrated in \eqref{eqtimediag}), their composition is equal to
		\begin{equation*}
			\tilde{y}_{T_{(k,m,1,1)}+3\cdot2^{-k}}^K \circ \big(\tilde{y}_{T_{(k,m,2,2^{\lfloor k/2 \rfloor})}}^K\big)^{-1}.
		\end{equation*}
		
		Furthermore, we have the following expression for this map in terms of binary expansions.
		
		For $x=(x_1,x_2)\in\mathbb{T}^2$ denote by $(x'_1,x'_2)=\tilde{y}_{T_{(k,m,1,1)}+3\cdot2^{-k}}^K \circ \big(\tilde{y}_{T_{(k,m,2,2^{\lfloor k/2 \rfloor})}}^K\big)^{-1}(x)$.
		
		For a.e. $x\in\mathbb{T}^2$, for all $j\in\{1,2\}$, $x'_j$ has swapping the $k$\textsuperscript{th} and $(k+1)$\textsuperscript{th} binary digits of $x_j$, illustrated by the map $\textcolor{red}{Y_k}$ below. For the convenience of the reader, we use colour to denote swaps in the binary expansion. Following the notation for binary expansions introduced in \eqref{eqbinexp}, we have that for $j\in\{1,2\}$ the coordinate, and for $l\in\mathbb{N}$ the binary digit,
		\begin{align*}
			\left(\tilde{y}_{T_{(k,m,1,1)}+3\cdot2^{-k}}^K \circ \big(\tilde{y}_{T_{(k,m,2,2^{\lfloor k/2 \rfloor})}}^K\big)^{-1}(x)\right)_j & = \Bigg( \begin{tikzpicture}[baseline=(n1.base)]
				\pgfdeclarelayer{background}
				\pgfsetlayers{background,main}
				\begin{scope}[scale=1/4]
					\node[rectangle] (n1) at (0,0) {$0\;.\;x_{j,1}$};
					\node[rectangle] (n5) [right=0mm of n1.south east, anchor=south west, yshift=2.5pt] {$...$};
					\node[rectangle] (n7) [right=0mm of n5.south east, anchor=south west, yshift=-2.5pt] {$x_{j,k}$};
					\node[rectangle] (n8) [right=-1mm of n7.south east, anchor=south west] {$x_{j,k+1}$};
					\node[rectangle] (n9) [right=0mm of n8.south east, anchor=south west, yshift=2.5pt] {$...$};
					\draw[<->] ([xshift=10mm] n8.north west) arc (0:180:15mm);
					\node[rectangle,text=red] (l0) at ([xshift=-2.5mm, yshift=25mm] n8.north west) {$Y_{k}$};
				\end{scope}
			\end{tikzpicture} \Bigg), \\
			\left(\tilde{y}_{T_{(k,m,1,1)}+3\cdot2^{-k}}^K \circ \big(\tilde{y}_{T_{(k,m,2,2^{\lfloor k/2 \rfloor})}}^K\big)^{-1}(x)\right)_{j,l} & = \begin{cases}
				x_{j,k+1} & \text{if } l=k, \\
				x_{j,k} & \text{if } l=k+1, \\
				x_{j,l} & \text{otherwise}.
			\end{cases}
		\end{align*}
		
		Next, we fix $m \in \{1,...,K\}$, and use \eqref{eqtorder}, \eqref{eqTkn}, \eqref{eqtm}, (illustrated in \eqref{eqtimediag}), to piece together $\tilde{y}_{T_{(k,m,1,1)}+3\cdot2^{-k}}^K \circ \big(\tilde{y}_{T_{(k,m,2,2^{\lfloor k/2 \rfloor})}}^K\big)^{-1}$ for $k \in \{m,m+1,...\}$.
		
		From this, we deduce that for a.e. $x\in\mathbb{T}^2$, for all $m \in \{1,...,K\}$, and for $j\in\{1,2\}$ the coordinate, and $l\in\mathbb{N}$ the binary digit, we have that
		\begin{align*}
			\left(\tilde{y}_{T_m}^{K} \circ \big(\tilde{y}_{T_{m-1}}^{K}\big)^{-1} (x)\right)_{j} & = \Bigg( \begin{tikzpicture}[baseline=(n1.base),scale=1]
				\pgfdeclarelayer{background}
				\pgfsetlayers{background,main}
				\begin{scope}[scale=1/4]
					\node[rectangle] (n1) {$0\;.\;x_{j,1}$};
					\node[rectangle] (n2) [right=0mm of n1.south east, anchor=south west, yshift=2.5pt] {$...$};
					\node[rectangle] (n3) [right=0mm of n2.south east, anchor=south west, yshift=-2.5pt] {$x_{j,m}$};
					\node[rectangle] (n5) [right=3mm of n3.south east, anchor=south west, yshift=2.5pt] {$...$};
					\node[rectangle] (n7) [right=3mm of n5.south east, anchor=south west, yshift=-2.5pt] {$x_{j,K}$};
					\node[rectangle] (n8) [right=-1mm of n7.south east, anchor=south west] {$x_{j,K+1}$};
					\node[rectangle] (n9) [right=0mm of n8.south east, anchor=south west, yshift=2.5pt] {$...$};
					\draw[->] ([xshift=10mm] n8.north west) arc (0:180:10mm);
					\draw[->] ([xshift=-20mm] n8.north west) arc (0:180:10mm);
					\draw[->] ([xshift=-50mm] n8.north west) arc (0:180:10mm);
					\draw[->] ([xshift=-80mm] n8.north west) arc (0:180:10mm);
					\node[rectangle,text=red] (l0) at ([xshift=0mm, yshift=20mm] n8.north west) {$Y_{K}$};
					\node[rectangle,text=red] (l1) at ([xshift=-30mm, yshift=20mm] n8.north west) {$Y_{K-1}$};
					\node[rectangle,text=red] (l1) at ([xshift=-60mm, yshift=20mm] n8.north west) {$...$};
					\node[rectangle,text=red] (l2) at ([xshift=-90mm, yshift=20mm] n8.north west) {$Y_{m}$};
				\end{scope}
			\end{tikzpicture} \Bigg), \\
			\left(\tilde{y}_{T_m}^{K} \circ \big(\tilde{y}_{T_{m-1}}^{K}\big)^{-1} (x)\right)_{j,l} & = \begin{cases}
				x_{j,K+1} & \text{if } l=m, \\
				x_{j,l-1} & \text{if } m < l\le K+1, \\
				x_{j,l} & \text{otherwise}.
			\end{cases}
		\end{align*}
		
		Therefore, since $\tilde{y}_{T_0}^{K}=\mathrm{Id}$, we see that for a.e. $x\in\mathbb{T}^2$, for all $m \in \{1,...,K\}$, and for $j\in\{1,2\}$ the coordinate, and $l\in\mathbb{N}$ the binary digit,
		\begin{align}
			\left(\big(\tilde{y}_{T_m}^K\big)^{-1}(x)\right)_j & = \left( \begin{tikzpicture}[baseline=(n1.base),scale=1]
				\pgfdeclarelayer{background}
				\pgfsetlayers{background,main}
				\begin{scope}[scale=1/4]
					\node[rectangle] (n1) {$0\;.\;x_{j,\textcolor{red}{m+1}}$};
					\node[rectangle] (n2) [right=-1mm of n1.south east, anchor=south west, yshift=0pt] {$x_{j,\textcolor{red}{m+2}}$};
					\node[rectangle] (n3) [right=0mm of n2.south east, anchor=south west, yshift=2.5pt] {$...$};
					\node[rectangle] (n5) [right=0mm of n3.south east, anchor=south west, yshift=-2.5pt] {$x_{j,\textcolor{red}{K+1}}$};
					\node[rectangle] (n6) [right=-1mm of n5.south east, anchor=south west, yshift=0pt] {$x_{j,\textcolor{magenta}{m}}$};
					\node[rectangle] (n7) [right=-1mm of n6.south east, anchor=south west, yshift=0pt] {$x_{j,\textcolor{magenta}{m-1}}$};
					\node[rectangle] (n8) [right=0mm of n7.south east, anchor=south west, yshift=2.5pt] {$...$};
					\node[rectangle] (n10) [right=0mm of n8.south east, anchor=south west, yshift=-2.5pt] {$x_{j,\textcolor{magenta}{1}}$};
					\node[rectangle] (n11) [right=-1mm of n10.south east, anchor=south west, yshift=0pt] {$x_{j,K+2}$};
					\node[rectangle] (n12) [right=-1mm of n11.south east, anchor=south west, yshift=0pt] {$x_{j,K+3}$};
					\node[rectangle] (n12) [right=0mm of n12.south east, anchor=south west, yshift=2.5pt] {$...$};
				\end{scope}
			\end{tikzpicture} \right), \nonumber \\
			\left(\big(\tilde{y}_{T_m}^K\big)^{-1}(x)\right)_{j,l} & = \begin{cases}
				x_{j,m+l} & \text{if } l \le K+1-m, \\
				x_{j,K+2-l} & \text{if } K+2-m \le l \le K+1, \\
				x_{j,l} & \text{if } l \ge K+2.
			\end{cases} \label{eqbinaryexp}
		\end{align}
		
		For each $m\in\mathbb{N}$ define now the map $z_m:\mathbb{T}^2 \to \mathbb{T}^2$ by, for all $x\in\mathbb{T}^2$, for $j\in\{1,2\}$ to coordinate, and $l\in\mathbb{N}$ the binary digit,
		\begin{align*}
			\left(z_m(x)\right)_j
			& = \left( \begin{tikzpicture}[baseline=(n1.base),scale=1]
				\pgfdeclarelayer{background}
				\pgfsetlayers{background,main}
				\begin{scope}[scale=1/4]
					\node[rectangle] (n1) {$0\;.\;x_{j,\textcolor{red}{m+1}}$};
					\node[rectangle] (n2) [right=-1mm of n1.south east, anchor=south west, yshift=0pt] {$x_{j,\textcolor{red}{m+2}}$};
					\node[rectangle] (n3) [right=-1mm of n2.south east, anchor=south west, yshift=0pt] {$x_{j,\textcolor{red}{m+3}}$};
					\node[rectangle] (n4) [right=0mm of n3.south east, anchor=south west, yshift=2.5pt] {$...$};
				\end{scope}
			\end{tikzpicture} \right), \\
			\left(z_m(x)\right)_{j,l} & = \begin{cases}
				x_{j,m+l} & \text{for all $l\in\mathbb{N}$}.
			\end{cases}
		\end{align*}
		
		This map is an approximation of \eqref{eqbinaryexp}. Notice that for the initial data $f_0 \in L^\infty(\mathbb{T}^2)$, $f_0\circ z_m$ contains $4^m$ scaled copies of $f_0$, one on each tile in the square lattice with widths $2^{-m}$.
		
		For any $r, r' \in \{1,...,2^{k-1}\}$, define the intervals $J=[(r-1)2^{1-k},r 2^{1-k}]$, $J'=[(r'-1)2^{1-k},r' 2^{1-k}]$. Then $z_m$ is a bijection from this tile $J\times J'$ to $\mathbb{T}^2$, that is
		\begin{equation*}
			z_m:J\times J'\leftrightarrow\mathbb{T}^2.
		\end{equation*}
		
		Moreover, for $d\mu$ the Lebesgue-measure on $\mathbb{T}^2$, $\left.z_m\right|_{J\times J'}\circ d\mu = 4^{-m} d\mu$. In particular $z_m:\mathbb{T}^2 \to \mathbb{T}^2$ is measure preserving, and
		\begin{equation*}
			\int_{J\times J'} f_0\circ z_m\;dx = \frac{1}{4^m}\int_{\mathbb{T}^2}f_0\;dx.
		\end{equation*}
		
		Therefore, we deduce that
		\begin{equation}\label{eqcopy}
			f_0\circ z_m \xrightharpoonup{m\to\infty} \int_{\mathbb{T}^2}f_0(y)\;dy,
		\end{equation}
		with the above convergence in weak-$*$ $L^\infty(\mathbb{T}^2)$.
		
		By \eqref{eqlagexpr}, for all $K\in\mathbb{N}$, $m\in\{1,...,L\}$, we may write $f^K(\cdot, T_m) = f_0\circ\big(\tilde{y}_{T_m}^K\big)^{-1}$. We now wish to approximate $f^K(\cdot, T_m)$ by $f_0\circ z_m$ to prove the mixing formula \eqref{eqmixed}.
		
		Fix some $\epsilon > 0$. Take a smooth approximation $\phi_\epsilon\in C^\infty(\mathbb{T}^2)$ of $f_0$ in $L^1(\mathbb{T}^2)$, so that $\left\|\phi_\epsilon - f_0\right\|_{L^1(\mathbb{T}^2)}\le\epsilon$. Since $\big(\tilde{y}_{T_m}^K\big)^{-1}$, $z_m$ are measure preserving, we see that also
		\begin{equation}\label{eqsmoothaprox}
			\begin{gathered}
				\left\|\phi_\epsilon \circ \big(\tilde{y}_{T_m}^K\big)^{-1} - f_0 \circ \big(\tilde{y}_{T_m}^K\big)^{-1}\right\|_{L^1(\mathbb{T}^2)}\le\epsilon, \\
				\left\|\phi_\epsilon \circ z_m - f_0 \circ z_m\right\|_{L^1(\mathbb{T}^2)}\le\epsilon.
			\end{gathered}
		\end{equation}
		
		Next, consider the square lattice with widths $2^{m-K-1}$, that is for $r, r' \in \{1,...,2^{1+K-m}\}$ define $J=[(r-1)2^{m-K-1},r2^{m-K-1}]$, and $J'=[(r-1)2^{m-K-1},r2^{m-K-1}]$.
		
		Then, by \eqref{eqbinaryexp}, we see that for a.e. $x\in\mathbb{T}^2$, $\big(\tilde{y}_{T_m}^K\big)^{-1}(x) \in J\times J'$ if and only if $z_m (x) \in J \times J'$. But then, by the Lipschitz bound on $\phi_\epsilon$,
		\begin{equation*}
			\left|\phi_\epsilon \circ \big(\tilde{y}_{T_m}^K\big)^{-1}(x) - \phi_\epsilon \circ z_m (x)\right| \le \sqrt{2} \left\|\nabla \phi_\epsilon\right\|_{L^\infty} 2^{m-K-1}.
		\end{equation*}
		
		Therefore,
		\begin{equation*}
			\left\|\phi_\epsilon\circ\big(\tilde{y}_{T_m}^K\big)^{-1} - \phi_\epsilon\circ z_m \right\|_{L^\infty(\mathbb{T}^2)} \le \sqrt{2} \left\|\nabla \phi_\epsilon\right\|_{L^\infty} 2^{m-K-1}.
		\end{equation*}
		
		In light of \eqref{eqsmoothaprox}, we deduce that
		\begin{equation}\label{eqzmbound}
			\left\|f^K(\cdot,T_m) - f_0\circ z_m \right\|_{L^1(\mathbb{T}^2)} \le 2\epsilon + \sqrt{2} \left\|\nabla \phi_\epsilon\right\|_{L^\infty} 2^{m-K-1}.
		\end{equation}
		
		Recall that $f^K\xrightarrow{K\to\infty}f^\infty$ strongly in $C^0([0,42-\epsilon];L^1(\mathbb{T}^2))$ for all $\epsilon > 0$. Therefore, for $m \in \mathbb{N}$ fixed, $f^K(\cdot, T_m)\xrightarrow{K\to\infty}f^\infty(\cdot, T_m)$ strongly in $L^1(\mathbb{T}^2)$.
		
		So, by \eqref{eqzmbound}, for all $m\in\mathbb{N}$, we see that $f^\infty(\cdot, T_m) = f_0\circ z_m$. That is $f^\infty(\cdot, T_m)$ contains $4^m$ scaled copies of $f_0$, one on each tile in the square lattice with widths $2^{-m}$.
		
		Hence, by \eqref{eqcopy}, $f^\infty(\cdot, T_m) \xrightharpoonup{m\to\infty} \int_{\mathbb{T}^2}f_0(y)\;dy$ converges in weak-$*$ $L^\infty(\mathbb{T}^2)$.
		
		Since $f^\infty\in C_{\mathrm{weak-}*}^0L^\infty$, with $f^\infty(\cdot, t)$ independent of $t\in [42,50]$, and $T_m \xrightarrow{m\to\infty}42$, we have proved the mixing formula \eqref{eqmixed}.
	\end{proof}
	\vspace{\baselineskip}
	
	Finally, it remains to find a suitably fast-growing sequence $\{L_\mathcal{P}\}_{\mathcal{P}\in\mathcal{N}}\subset\mathbb{N}$ in Definition \ref{perfectmixing}. Then to apply Theorem \ref{vcontrol} to the sequence of vector fields $u_\mathcal{K}^{\{L_\mathcal{P}\}_{\mathcal{P}\in\mathcal{N_K}}}$, indexed by $\mathcal{K}\in\mathcal{N}$. This will allow us to approximate the vanishing viscosity limit to \eqref{eqADE} along $u_\infty^{\{L_\mathcal{P}\}_{\mathcal{P}\in\mathcal{N}}}$. Additionally, by subsequently time-reversing the vector field $u_\infty^{\{L_\mathcal{P}\}_{\mathcal{P}\in\mathcal{N}}}$ on the time interval $[50,100]$, we will obtain the inadmissible behaviour below.
	\begin{theorem}[Inadmissible vanishing viscosity limit]\label{perfmixvv}
		There exists a divergence-free vector field $u \in L^\infty([0,100];L^\infty(\mathbb{T}^2;\mathbb{R}^2))$, such that for any initial data $f_0\in L^\infty(\mathbb{T}^2)$, and for $f^\nu$ the unique solution to \eqref{eqADE} along $u$ with initial data $f_0$, one has
		\begin{equation*}
			f^\nu\xrightarrow{\nu\to0}f,
		\end{equation*}
		with the above convergence in weak-$*$ $L^\infty([0,100];L^\infty(\mathbb{T}^2))$, strong in $L^p([0,42];L^p(\mathbb{T}^2))$, $C^0([0,42-\epsilon];L^p(\mathbb{T}^2))$, $L^p([58,100];L^p(\mathbb{T}^2))$, and $C^0([58+\epsilon,100];L^p(\mathbb{T}^2))$ for all $p \in [1,\infty)$, and all $\epsilon >0$. The limit function $f \in C_{\mathrm{weak-}*}^0([0,100];L^\infty(\mathbb{T}^2))$ is a weak solution to \eqref{eqTE} along $u$ with initial data $f_0$.
		
		Moreover, for all $t \in [42,58]$
		\begin{equation*}
			f(\cdot, t) \equiv \int_{\mathbb{T}^2}f_0(y)\;dy,
		\end{equation*}
		is perfectly mixed to its spatial average.
		
		Furthermore, for all $t \in [0,100]$, $f(\cdot, t)=f(\cdot,100-t)$ and in particular,
		\begin{equation*}
			f(\cdot, 100) = f_0,
		\end{equation*}
		is perfectly unmixed. In particular, if $f_0$ is not constant, any $L^p(\mathbb{T}^2)$ norms of $f(\cdot, t)$ (for $p\in(1,\infty]$) increase after $t=58$, contrary to the entropy-admissibility criterion in \cite{dafermos2005hyperbolic}.
	\end{theorem}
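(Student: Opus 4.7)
The plan is to combine the ``mixing'' construction of Proposition \ref{mixinglemma} with a time-reversal to obtain a ``mix-then-unmix'' inviscid solution on $[0,100]$, and then to control the vanishing viscosity limit via Theorem \ref{vcontrol} together with a uniqueness argument. Concretely, for each finite $\mathcal K\in\mathcal N$ and a choice of $\{L_\mathcal P\}\subset\mathbb N$ with $L_{(k,m,i,n)}\ge k+1$, extend $u_\mathcal K^{\{L_\mathcal P\}}$ from Definition \ref{perfectmixing} to $[0,100]$ by time-reversal:
\[
\tilde u_\mathcal K(x,t) = \begin{cases} u_\mathcal K^{\{L_\mathcal P\}}(x,t), & t\in[0,50], \\ -u_\mathcal K^{\{L_\mathcal P\}}(x,100-t), & t\in[50,100]. \end{cases}
\]
Each $\tilde u_\mathcal K$ is divergence-free, $L^\infty L^\infty$-bounded by $1$, and admits a unique Lagrangian (hence renormalised) weak solution $\tilde f^\mathcal K$ with initial data $f_0$, obtained by running the Lagrangian flow from Proposition \ref{mixinglemma} on $[0,50]$ and then its inverse on $[50,100]$; by construction the reflection identity $\tilde f^\mathcal K(x,t)=\tilde f^\mathcal K(x,100-t)$ holds, so in particular $\tilde f^\mathcal K(\cdot,100)=f_0$.

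Next, I would select $\{L_\mathcal P\}_{\mathcal P\in\mathcal N}$ inductively in the style of the proof of Theorem \ref{vvtheorem}: the weak-$*$ vanishing of elementary binary swaps as $L\to\infty$ (property \eqref{eqweakconv}) permits, at each step, a choice of $L_\mathcal P$ so large that the $\tilde u_\mathcal K$ form a weak-$*$ Cauchy sequence with limit $u\in L^\infty L^\infty$, and that the closeness hypothesis of Theorem \ref{vcontrol} is satisfied. Theorem \ref{vcontrol} then furnishes a vanishing sequence $\nu_\mathcal K\to 0$ along which $f^{\nu_\mathcal K}\to f$ in $L^\infty L^p$, where $f=\lim_\mathcal K \tilde f^\mathcal K$ exists by Proposition \ref{mixinglemma} on $[0,50]$ and by reflection on $[50,100]$. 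To upgrade from subsequential to full convergence $f^\nu\to f$, I would apply Proposition \ref{ADEtoTE}; this needs uniqueness of the inviscid weak solution in $L^\infty L^\infty$ together with its renormalisation. Uniqueness follows by a piecewise argument: $u$ is supported on a countable disjoint union of binary-swap intervals $I_\mathcal P=[T_\mathcal P,T_\mathcal P+3\cdot 2^{-k}]$ inside $[0,42]$ together with their $[50,100]$ reflections, on each of which Proposition \ref{swapuniq} pins down the solution from its left-endpoint trace; on the complement $u\equiv 0$ so the solution is constant in time. Induction along the order $<_\mathrm{time}$ of Definition \ref{totalorders}, with weak-$*$ continuity (Theorem \ref{weakcont}) used to transfer uniqueness across the cluster times $t=42$ and $t=58$, yields uniqueness on all of $[0,100]$; renormalisation of $f$ is obtained by the same argument applied to $\beta(f_0)$ for $\beta\in C_b^0(\mathbb R)$ via Remark \ref{lagrenorm}.

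The remaining properties follow quickly. The reflection identity passes to the limit so $f(\cdot,100)=f_0$, and $f(\cdot,t)\equiv\int f_0$ on $[42,58]$ combines the mixing statement from Proposition \ref{mixinglemma} on $[42,50]$ with reflection on $[50,58]$; the refined $C^0 L^p$ convergences on $[0,42-\epsilon]$ and $[58+\epsilon,100]$ use the $C^0 L^p$ regularity of $f$ away from the endpoints (again from Proposition \ref{mixinglemma}) together with the second conclusion of Proposition \ref{ADEtoTE}. Inadmissibility is immediate: for non-constant $f_0$ and $p\in(1,\infty]$, $\|f(\cdot,t)\|_{L^p}$ strictly drops across $t\in[0,42]$ and strictly rises across $t\in[58,100]$. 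The main technical obstacle is the uniqueness argument at the accumulation times $t=42$ and $t=58$, where infinitely many swap intervals cluster and naive piecewise induction breaks down; here weak-$*$ continuity of the candidate solution in $L^\infty$ is essential to pin down its trace at these limit times and continue the induction into the next phase.
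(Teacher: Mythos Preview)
Your proposal has a fundamental gap: the uniqueness claim for the inviscid problem along $u$ is false, and with it the applicability of Proposition \ref{ADEtoTE}. Once your limit $f$ reaches the constant $\int_{\mathbb T^2} f_0$ at $t=42$, the function that \emph{stays} equal to this constant on all of $[42,100]$ is another bounded weak solution of \eqref{eqTE} along $u$ with the same initial data $f_0$; it differs from your unmixing $f$ on $[58,100]$ whenever $f_0$ is non-constant. Indeed, the non-uniqueness of bounded weak solutions along $u$ is precisely the content of the theorem: vanishing viscosity is being shown to select the \emph{inadmissible} one among several. Your piecewise induction cannot repair this, because at the cluster time $t=58$ both candidates are weak-$*$ continuous (Theorem \ref{weakcont} gives this for \emph{every} $L^\infty L^\infty$ weak solution) and both have the same trace $\int f_0$, so weak-$*$ continuity does not single one out. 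Relatedly, $f$ is \emph{not} renormalised: its $L^p$ norms strictly increase across $[58,100]$, so the second hypothesis of Proposition \ref{ADEtoTE} also fails. Your appeal to Remark \ref{lagrenorm} is unavailable because there is no Lagrangian flow for $u$ on $[0,100]$ producing $f$.

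The paper avoids uniqueness entirely and instead proves full (not just subsequential) convergence by a direct stability estimate filling the gaps between the well-prepared viscosities $\nu_n$. The point is that $u_{n+1}$ and $u_n$ differ only on one binary-swap time window of length $3\cdot 2^{-k}$ (and its reflection), and \emph{spatially} only on a strip of width $2^{-\lfloor k/2\rfloor}$ thanks to property \eqref{eqgradualmixing}. Outside that strip $f^{n+1,\nu}-f^{n,\nu}$ solves the pure heat equation with zero initial data, so the maximum principle with an explicit $\mathrm{erf}$ barrier gives
\[
\sup_{\nu\le \nu_n}\bigl\|f^{n+1,\nu}-f^{n,\nu}\bigr\|_{L^\infty L^1}\ \lesssim\ \|f_0\|_{L^\infty}\Bigl(2^{-\lfloor k/2\rfloor}+\sqrt{\nu_n\,2^{-k}}\Bigr)\ \xrightarrow[n\to\infty]{}\ 0,
\]
uniformly in $\nu\in[\nu_{n+1},\nu_n]$. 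Combined with \eqref{list2} and \eqref{list4} of Theorem \ref{vcontrol} this yields $\sup_{\nu\in[\nu_{n+1},\nu_n]}\|f^{\infty,\nu}-f^n\|_{L^\infty L^p}\to 0$, hence convergence along the \emph{full} limit $\nu\to 0$. This use of the third ``intermittency'' scale (the index $n$ in $(k,m,i,n)$ localising each swap to a thin strip) is the missing idea in your proposal.
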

	\begin{proof}
		Recall the language and notation introduced in Definitions \ref{totalorders}, \ref{perfectmixing}, as well as Definitions \ref{renormalised}, \ref{lagrangian} of renormalised and Lagrangian solutions to \eqref{eqTE}.
		
		For each $n \in \mathbb{N}$ we denote by $\mathcal{K}_n \in \mathcal{N}$ the isomorphism between the well orders $(\mathbb{N}, <)$ and $(\mathcal{N},<_\mathrm{lex})$. That is $n\mapsto\mathcal{K}_n$ is a bijection from $\mathbb{N}$ to $\mathcal{N}$, and for all $n_1, n_2 \in\mathbb{N}$, we have that $n_1 < n_2$ if and only if $\mathcal{K}_{n_1} <_\mathrm{lex} \mathcal{K}_{n_2}$.
		
		For an infinite sequence $\{L_\mathcal{P}\}_{\mathcal{P} \in \mathcal{N}}\subset\mathbb{N}$ with $L_{(k,m,i,q)} \ge k+1$ for all $(k,m,i,q) \in \mathcal{N}$, we define $u_n:\mathbb{T}^2 \times [0,100] \to \mathbb{R}^2$ by
		\begin{equation}\label{equnform}
			u_n(x,t) = \begin{cases}
				u_{\mathcal{K}_n}^{\{L_\mathcal{P}\}_{\mathcal{P}\in\mathcal{N}_{\mathcal{K}_n}}}(x,t) & \text{if } t \in [0,50], \\
				-u_{\mathcal{K}_n}^{\{L_\mathcal{P}\}_{\mathcal{P}\in\mathcal{N}_{\mathcal{K}_n}}}(x,100-t) & \text{if } t \in [50,100].
			\end{cases}
		\end{equation}
		
		$u_n \in L^\infty L^\infty$ is then bounded by $1$.
		
		Let $f_0 \in L^\infty(\mathbb{T}^2)$, then for $f^{\mathcal{K}_n}$ given by Proposition \ref{mixinglemma}, we have the following Lagrangian solution $f^n\in C^0L^1$ to \eqref{eqTE} along $u_n$ with initial data $f_0$,
		\begin{equation}\label{eqflip}
			f^n(x,t) = \begin{cases}
				f^{\mathcal{K}_n}(x,t) & \text{if } t \in [0,50], \\
				f^{\mathcal{K}_n}(x,100-t) & \text{if } t \in [50,100].
			\end{cases}
		\end{equation}
		
		We now apply Theorem \ref{vcontrol}. Let $d_*$ be a metric inducing the weak-$*$ topology on
		\begin{equation*}
			\left\{u\in L^\infty \left( [0,100];L^\infty(\mathbb{T}^2;\mathbb{R}^2)\right) : \left\|u\right\|_{L^\infty L^\infty}\le 1 \right\}.
		\end{equation*}
		
		Let $f_0 \in L^\infty(\mathbb{T}^2)$, and denote for each $n\in\mathbb{N}$, $\nu > 0$, by $f^{n,\nu}$, respectively $f^n$, the unique weak solution to \eqref{eqADE}, respectively \eqref{eqTE}, along $u_n$ with initial data $f_0$. Moreover denote by $f^{\infty,\nu}$ the unique weak solution to \eqref{eqADE} along $u_\infty^{\{L_\mathcal{P}\}_{\mathcal{P}\in\mathcal{N}}}$ with initial data $f_0$.
		
		Then by Theorem \ref{vcontrol},
		\begin{enumerate}[label=\textbf{S.\arabic*},ref=S.\arabic*]
			\item \label{listnu} For all $n \in \mathbb{N}$ there exists $\nu_n > 0$, $\epsilon_n > 0$ depending only on $\{L_\mathcal{P}\}_{\mathcal{P}\in\mathcal{N}_{\mathcal{K}_n}}$ (and in particular not on $f_0$), with $\nu_n\xrightarrow{n\to\infty}0$ monotonically, such that the following hold true:
			\item \label{listupper} For all $p\in[1,\infty)$
			\begin{equation*}
				\sup_{\nu\le\nu_n}\left\|f^{n,\nu}-f^n\right\|_{L^\infty L^p}\xrightarrow{n\to\infty}0,
			\end{equation*}
			\setcounter{enumi}{3}
			\item \label{listlower} If $d_*(u_{n+1},u_n) \le \epsilon_n$ for all $n\in\mathbb{N}$, then for all $p \in [1,\infty)$
			\begin{equation*}
				\sup_{\nu_n \le \nu\le \nu_1}\left\|f^{n,\nu}-f^{\infty,\nu}\right\|_{L^\infty L^p}\xrightarrow{n\to\infty}0.
			\end{equation*}
		\end{enumerate}
		
		We now choose $\{L_\mathcal{P}\}_{\mathcal{P}\in\mathcal{N}}$. Proceeding by induction on $N\in\mathbb{N}$, assume there exists a sequence $\{L_\mathcal{P}\}_{\mathcal{P}\in\mathcal{N}_{\mathcal{K}_N}}\subset\mathbb{N}$, so that with $\{u_{n}\}_{n=1}^N \subset L^\infty([0,100];L^\infty(\mathbb{T}^2;\mathbb{R}^2))$ given by \eqref{equnform}, and $\{\epsilon_{n}\}_{n=1}^N\subset (0,\infty)$ given by \eqref{listnu}, we have that for all $n \in \{1,...,N-1\}$,
		\begin{equation*}
			d_*(u_{n+1},u_{n}) \le \epsilon_{n}.
		\end{equation*}
		
		We next pick $L_{\mathcal{K}_{N+1}}\in\mathbb{N}$. Note that for any $L_{\mathcal{K}_{N+1}}\in\mathbb{N}$ we obtain by \eqref{equnform} a vector field $u_{N+1}$.
		
		By \eqref{eqweakconv} and Definition \ref{perfectmixing}, we see that as $L_{\mathcal{K}_{N+1}}\to\infty$,
		\begin{equation*}
			d_*(u_{N+1},u_N)\xrightarrow{(L_{\mathcal{K}_{N+1}})\to\infty}0,
		\end{equation*}
		and so we may pick $L_{\mathcal{K}_{N+1}}$ large enough that $d_*(u_{N+1},u_n) \le \epsilon_N$. This completes the inductive step.
		
		That is, there exists a sequence $\{L_\mathcal{P}\}_{\mathcal{P}\in\mathcal{N}}\subset\mathbb{N}$ such that for all $n\in\mathbb{N}$, with $u_n$ given by \eqref{equnform}, we have that $d_*(u_{n+1},u_n) \le \epsilon_n$. Therefore, \eqref{listupper}, and \eqref{listlower} are satisfied.
		
		Next, for all $n \in\mathbb{N}$, and for all $p\in[1,\infty)$, we see that
		\begin{align*}
			\sup_{\nu_{n+1} \le \nu\le \nu_n}\left\|f^{\infty,\nu}-f^n\right\|_{L^\infty L^p} \le & \sup_{\nu_{n+1} \le \nu\le \nu_n}\left\|f^{\infty,\nu}-f^{n+1,\nu}\right\|_{L^\infty L^p} \\
			& + \sup_{\nu_{n+1} \le \nu\le \nu_n}\left\|f^{n+1,\nu}-f^{n,\nu}\right\|_{L^\infty L^p} \\
			& + \sup_{\nu_{n+1} \le \nu\le \nu_n}\left\|f^{n,\nu}-f^n\right\|_{L^\infty L^p},
		\end{align*}
		
		Therefore, by \eqref{listupper}, \eqref{listlower}, if for all $p\in[1,\infty)$,
		\begin{equation}\label{eqfinaleq}
			\sup_{\nu_{n+1} \le \nu\le \nu_n}\left\|f^{n+1,\nu}-f^{n,\nu}\right\|_{L^\infty L^p} \xrightarrow{n\to\infty}0,
		\end{equation}
		then also for all $p\in[1,\infty)$,
		\begin{equation*}
			\sup_{\nu_{n+1} \le \nu\le \nu_n}\left\|f^{\infty,\nu}-f^n\right\|_{L^\infty L^p}\xrightarrow{n\to\infty}0.
		\end{equation*}
		
		Therefore, the statement of Theorem \ref{perfmixvv} follows from Proposition \ref{mixinglemma} applied to \eqref{eqflip}.
		
		Notice that for all $p\in(1,\infty)$, \eqref{eqfinaleq} follows from the case $p=1$ by interpolation with the existing uniform bound in $L^\infty L^\infty$. We, therefore, only prove \eqref{eqfinaleq} for $p=1$.
		
		Let $n\in\mathbb{N}$. Express $\mathcal{K}_{n+1} \in \mathcal{N}$ as $\mathcal{K}_{n+1} = (k,m,i,q)$ with $k\in\mathbb{N}$, $m\in\{1,...,k\}$, $i\in\{1,2\}$, and $q\in\{1,...,2^{\left\lfloor k/2\right\rfloor}\}$.
		
		Then, by the expression \eqref{equnform}, and Definitions \ref{totalorders}, \ref{perfectmixing}, we have that for all $t \notin [T_{\mathcal{K}_{n+1}},T_{\mathcal{K}_{n+1}}+3\cdot2^{-k}] \cup [100-T_{\mathcal{K}_{n+1}}-3\cdot2^{-k},100-T_{\mathcal{K}_{n+1}}]$, that $u_{n+1}(\cdot, t) = u_n(\cdot, t)$.
		
		Therefore, for all $\nu>0$, $f^{n+1,\nu}-f^{n,\nu} \in C^0L^1$ is a solution to \eqref{eqADE} on the time interval $[0,T_{\mathcal{K}_{n+1}}]$ along the same $u_n$ with initial data $0\in L^\infty(\mathbb{T}^2)$.
		
		Similarly, on the time interval $[T_{\mathcal{K}_{n+1}}+3\cdot2^{-k},100-T_{\mathcal{K}_{n+1}}-3\cdot2^{-k}]$ with initial data $(f^{n+1,\nu}-f^{n,\nu})(\cdot,T_{\mathcal{K}_{n+1}}+3\cdot2^{-k})\in L^\infty(\mathbb{T}^2)$.
		
		Similarly, on the time interval $[100-T_{\mathcal{K}_{n+1}},100]$ with initial data $(f^{n+1,\nu}-f^{n,\nu})(\cdot,100-T_{\mathcal{K}_{n+1}})\in L^\infty(\mathbb{T}^2)$.
		
		Applying the $L^p$-Inequality \eqref{eqlpineq} to these three cases shows that
		\begin{gather}
			\left\|f^{n+1,\nu}-f^{n,\nu}\right\|_{L^\infty([0,T_{\mathcal{K}_{n+1}}]; L^1(\mathbb{T}^2))} = 0, \label{eqinitheatbound} \\
			\left\|f^{n+1,\nu}-f^{n,\nu}\right\|_{L^\infty([T_{\mathcal{K}_{n+1}}+3\cdot2^{-k},100-T_{\mathcal{K}_{n+1}}-3\cdot2^{-k}]; L^1(\mathbb{T}^2))} \le \left\|\left(f^{n+1,\nu}-f^{n,\nu}\right)(\cdot, T_{\mathcal{K}_{n+1}}+3\cdot2^{-k})\right\|_{L^1(\mathbb{T}^2)}, \label{eqinitheatbound2} \\
			\left\|f^{n+1,\nu}-f^{n,\nu}\right\|_{L^\infty([100-T_{\mathcal{K}_{n+1}},100]; L^1(\mathbb{T}^2))} \le \left\|\left(f^{n+1,\nu}-f^{n,\nu}\right)(\cdot, 100-T_{\mathcal{K}_{n+1}})\right\|_{L^1(\mathbb{T}^2)}. \nonumber
		\end{gather}
		
		Therefore, using the continuity $f^{n+1,\nu}-f^{n,\nu}\in C^0L^1$, we have the bound
		\begin{multline}\label{eqheatbound}
			\left\|f^{n+1,\nu}-f^{n,\nu}\right\|_{L^\infty L^1} \\ \le \left\|f^{n+1,\nu}-f^{n,\nu}\right\|_{L^\infty([T_{\mathcal{K}_{n+1}},T_{\mathcal{K}_{n+1}}+3\cdot2^{-k}]\cap[100-T_{\mathcal{K}_{n+1}}-3\cdot2^{-k},100-T_{\mathcal{K}_{n+1}}]; L^1(\mathbb{T}^2))}.
		\end{multline}
		
		Next, for $k\in\mathbb{N}$ given in terms of $n\in\mathbb{N}$ by the expression $\mathcal{K}_{n+1}=(k,m,i,q)$, we aim to prove the bound, for all $n\in\mathbb{N}$, and for all $\nu>0$, 
		\begin{equation}\label{eqlocalheatbound}
			\begin{gathered}
				\left\|f^{n+1,\nu}-f^{n,\nu}\right\|_{L^\infty([T_{\mathcal{K}_{n+1}},T_{\mathcal{K}_{n+1}}+3\cdot2^{-k}];L^1(\mathbb{T}^2))} \\ \le 2\left\|f_0 \right\|_{L^\infty} 2^{-\lfloor k/2 \rfloor} + C\left\|f_0\right\|_{L^\infty}\sqrt{\nu 2^{-k}},
			\end{gathered}
		\end{equation}
		with $C>0$ independent of $n\in\mathbb{N}$ and $\nu>0$.
		
		In the expression $\mathcal{K}_{n+1} = (k,m,i,q)$, we assume, without loss of generality, that $i=1$. The case $i=2$ then follows the same argument with the coordinates reversed.
		
		Let $J=[(q-1)2^{-\left\lfloor k/2 \right\rfloor},q2^{-\left\lfloor k/2 \right\rfloor}]\subset\mathbb{T}$.
		
		Then, by \eqref{eqgradualmixing}, for all $t\in[0,3\cdot2^{-k}]$, and for all $x \notin J\times\mathbb{T}$, we see that the binary swap vector field $u^{(i,k,q,L_{\mathcal{K}_{n+1}})}(x,t)=0$.
		
		Therefore, for all $t \in [T_{\mathcal{K}_{n+1}},T_{\mathcal{K}_{n+1}}+3\cdot2^{-k}]\cap[100-T_{\mathcal{K}_{n+1}}-3\cdot2^{-k},100-T_{\mathcal{K}_{n+1}}]$, and for all $x \notin J\times\mathbb{T}$, we have that $u_{n+1}(x, t), u_n(x, t) = 0$.
		
		We first tackle the case $t\in[T_{\mathcal{K}_{n+1}},T_{\mathcal{K}_{n+1}}+3\cdot2^{-k}]$.
		
		Let $\Omega=[0,1-2^{-\lfloor k/2 \rfloor}]\times\mathbb{T}\subset\mathbb{T}^2$, a periodic strip.
		
		Then, on the spatio-temporal domain $((x_1,x_2),t)\in\Omega\times[0,3\cdot2^{-k}]$, we see that $(f^{n+1,\nu}-f^{n,\nu})((x_1+q2^{-\lfloor k/2 \rfloor},x_2),t+T_{\mathcal{K}_{n+1}})$ is a solution to the heat equation, and by \eqref{eqinitheatbound} has the initial data $0$. However, its boundary data is unknown, so we will construct a second solution to the heat equation on the same domain, with initial data 0, which is an upper bound on the boundary, and then apply the maximum principle.
		
		We have the a priori bound $\left\|f^{n+1,\nu}-f^{n,\nu}\right\|_{L^\infty L^\infty} \le 2\left\|f_0\right\|_{L^\infty}$.
		
		Therefore, we may apply hypo-ellipticity for the heat equation (see for example Section 4.4 in \cite{hormander2003}) to deduce that $(f^{n+1,\nu}-f^{n,\nu})((x_1+q2^{-\lfloor k/2 \rfloor},x_2),t+T_{\mathcal{K}_{n+1}})$ is smooth on the interior of the domain $\Omega\times[0,3\cdot2^{-k}]$.
		
		We introduce
		\begin{gather*}
			\mathrm{erf}(x) = \int_{-\infty}^x e^{-y^2} \; dy, \\
			C_0=\mathrm{erf}\left(0\right), \\
			a = 1-2^{-\lfloor k/2 \rfloor}.
		\end{gather*}
		
		We define the following solution to the heat equation.
		\begin{gather*}
			\theta:\Omega\times[0,3\cdot2^{-k}]\to\mathbb{R}, \\
			\theta((x_1,x_2),t) = 2\left\|f_0\right\|_{L^\infty}C_0^{-1}\left(\mathrm{erf}\left(\frac{-x_1}{\sqrt{4\nu t}}\right) + \mathrm{erf}\left(\frac{x_1-a}{\sqrt{4\nu t}}\right)\right).
		\end{gather*}
		
		Observe that $\theta$ has initial data $0$, and its value on the boundary $\partial\Omega = \{0,a\}\times\mathbb{T}$ is greater than or equal to $2\left\|f_0\right\|_{L^\infty}$. Also for all $(x_1,x_2)\in\Omega$, we have that $\theta((x_1,x_2),t)$ is an increasing function of $t\in[0,3\cdot2^{-k}]$.
		
		Therefore, by maximum principle, we have the following point-wise bound on the interior of the domain, for all $(x_1,x_2) \in (0,1-2^{-\lfloor k/2 \rfloor})\times\mathbb{T}$, and for all $t\in(0,3\cdot2^{-k})$,
		\begin{equation*}
			\left|(f^{n+1,\nu}-f^{n,\nu})((x_1+q2^{-\lfloor k/2 \rfloor},x_2),t+T_{\mathcal{K}_{n+1}})\right| \le \theta((x_1,x_2),3\cdot2^{-k}).
		\end{equation*}
		
		Hence,
		\begin{gather*}
			\left\|f^{n+1,\nu}-f^{n,\nu}\right\|_{L^\infty([T_{\mathcal{K}_{n+1}},T_{\mathcal{K}_{n+1}}+3\cdot2^{-k}];L^1((\mathbb{T}\setminus J)\times\mathbb{T}))} \\ \le 4\left\|f_0\right\|_{L^\infty}C_0^{-1} \int_0^a \mathrm{erf}\left(\frac{-x}{\sqrt{4\nu\cdot 3\cdot 2^{-k}}}\right) \; dx.
		\end{gather*}
		
		After changing variables, for $C=8\sqrt{3}C_0^{-1}\int_0^\infty\mathrm{erf}(-x)\;dx$, we have that
		\begin{gather*}
			\left\|f^{n+1,\nu}-f^{n,\nu}\right\|_{L^\infty([T_{\mathcal{K}_{n+1}},T_{\mathcal{K}_{n+1}}+3\cdot2^{-k}];L^1((\mathbb{T}\setminus J)\times\mathbb{T}))} \\ \le C\left\|f_0\right\|_{L^\infty}\sqrt{\nu 2^{-k}}.
		\end{gather*}
		
		Since additionally $J\times\mathbb{T}$ has Lebesgue-measure $2^{-\lfloor k/2 \rfloor}$, and we have the a priori bound $\left\|f^{n+1,\nu}-f^{n,\nu}\right\|_{L^\infty L^\infty} \le 2\left\|f_0\right\|_{L^\infty}$, we deduce \eqref{eqlocalheatbound}.
		
		Next, since $k$ in determined by $n$ through the map $n\mapsto \mathcal{K}_{n+1}=(k,i,m,q)$, denote this now by $k_n \in \mathbb{N}$.
		
		Since $n\mapsto\mathcal{K}_{n+1}$ respects the well-orders $(\mathbb{N},<)$, $(\mathcal{N},<_\mathrm{lex})$, we see that $k_n\xrightarrow{n\to\infty}\infty$. Moreover, by \eqref{listnu}, we have that $\nu_n\xrightarrow{n\to\infty}0$.
		
		Therefore, by \eqref{eqlocalheatbound}, we see that
		\begin{equation}\label{eqfirstbound}
			\sup_{\nu_{n+1} \le \nu\le \nu_n}\left\|f^{n+1,\nu}-f^{n,\nu}\right\|_{L^\infty([T_{\mathcal{K}_{n+1}},T_{\mathcal{K}_{n+1}}+3\cdot2^{-k_n}];L^1(\mathbb{T}^2))} \xrightarrow{n\to\infty}0.
		\end{equation}
		
		By \eqref{eqheatbound}, \eqref{eqfirstbound}, the proof of \eqref{eqfinaleq} will then be complete if also
		\begin{equation}\label{eqfinalbound}
			\sup_{\nu_{n+1} \le \nu\le \nu_n}\left\|f^{n+1,\nu}-f^{n,\nu}\right\|_{L^\infty([100-T_{\mathcal{K}_{n+1}}-3\cdot2^{-k_n},100-T_{\mathcal{K}_{n+1}}];L^1(\mathbb{T}^2))} \xrightarrow{n\to\infty}0.
		\end{equation}
		
		To this end denote by $g^{n,\nu}\in C^0([100-T_{\mathcal{K}_{n+1}}-3\cdot2^{-k_n},100-T_{\mathcal{K}_{n+1}}],L^1(\mathbb{T}^2))$ the solution to \eqref{eqADE} along $u_n$ on the time interval $[100-T_{\mathcal{K}_{n+1}}-3\cdot2^{-k_n},100-T_{\mathcal{K}_{n+1}}]$ with initial data $f^{n+1,\nu}(\cdot,100-T_{\mathcal{K}_{n+1}}-3\cdot2^{-k_n})\in L^\infty(\mathbb{T}^2)$.
		
		Then, arguing as in \eqref{eqlocalheatbound}, we have the same bound. For all $n\in\mathbb{N}$, for all $\nu>0$,
		\begin{equation*}
			\begin{gathered}
				\left\|f^{n+1,\nu}-g^{n,\nu}\right\|_{L^\infty([100-T_{\mathcal{K}_{n+1}}-3\cdot2^{-k_n},100-T_{\mathcal{K}_{n+1}}];L^1(\mathbb{T}^2))} \\ \le 2\left\|f_0 \right\|_{L^\infty} 2^{-\lfloor k_n/2 \rfloor} + C\left\|f_0\right\|_{L^\infty}\sqrt{\nu 2^{-k_n}}.
			\end{gathered}
		\end{equation*}
		
		Therefore, as before, we deduce that
		\begin{equation}\label{eqsecondbound}
			\sup_{\nu_{n+1} \le \nu\le \nu_n}\left\|f^{n+1,\nu}-g^{n,\nu}\right\|_{L^\infty([100-T_{\mathcal{K}_{n+1}}-3\cdot2^{-k_n},100-T_{\mathcal{K}_{n+1}}];L^1(\mathbb{T}^2))} \xrightarrow{n\to\infty}0.
		\end{equation}
		
		By the $L^p$-Inequality \eqref{eqlpineq}, and \eqref{eqinitheatbound2}, we have that, for all $n\in\mathbb{N}$, and for all $\nu>0$,
		\begin{gather*}
			\left\|g^{n,\nu}-f^{n,\nu}\right\|_{L^\infty([100-T_{\mathcal{K}_{n+1}}-3\cdot2^{-k_n},100-T_{\mathcal{K}_{n+1}}];L^1(\mathbb{T}^2))} \\ \le \left\|\left(f^{n+1,\nu}-f^{n,\nu}\right)(\cdot, T_{\mathcal{K}_{n+1}}+3\cdot2^{-k_n})\right\|_{L^1(\mathbb{T}^2)}.
		\end{gather*}
		
		So, by \eqref{eqfirstbound},
		\begin{equation*}
			\sup_{\nu_{n+1} \le \nu\le \nu_n}\left\|g^{n,\nu}-f^{n,\nu}\right\|_{L^\infty([100-T_{\mathcal{K}_{n+1}}-3\cdot2^{-k_n},100-T_{\mathcal{K}_{n+1}}];L^1(\mathbb{T}^2))}\xrightarrow{n\to\infty}0.
		\end{equation*}
		
		Therefore, by also \eqref{eqsecondbound}, we deduce \eqref{eqfinalbound}.
	\end{proof}
	
	\section*{Acknowledgements}
	Lucas Huysmans acknowledges support from the UK Engineering and Physical Sciences Research Council (EPSRC) under grant numbers EP/T517847/1 and EP/V52024X/1. The work of E.S.T. has benefited from the inspiring environment of the CRC 1114 ``Scaling Cascades in Complex Systems'', Project Number 235221301, Project C06, funded by Deutsche Forschungsgemeinschaft (DFG). The authors would like to thank the Isaac Newton Institute for Mathematical Sciences, Cambridge, for support and hospitality during the program ``Mathematical aspects of turbulence: where do we stand?'' where part of this work was undertaken and supported by EPSRC grant no.~EP/K032208/1.
	
	\bibliographystyle{apa}
	\bibliography{citations.bib}
	
\end{document}